\theoremstyle{plain}  
\newtheorem{thm}{Theorem}[section]
\newtheorem{con}[thm]{Conjecture}
\newtheorem{cor}[thm]{Corollary}
\newtheorem{lem}[thm]{Lemma}
\newtheorem{prop}[thm]{Proposition}
\newtheorem*{main}{Theorem}
\theoremstyle{definition}
\newtheorem{df}[thm]{Definition}
\newtheorem{ex}[thm]{Example}
\newtheorem{his}[thm]{Historical Note}
\newtheorem{ob}[thm]{Observations}
\newtheorem{prob}[thm]{Problem}
\newtheorem{rem}[thm]{Remark}
\theoremstyle{remark}
\DeclareMathOperator{\id}{id}
\DeclareMathOperator{\isoto}{\overset{\scriptstyle{\sim}}{\to}}
\DeclareMathOperator{\Ker}{Ker}
\DeclareMathOperator{\im}{Im}
\DeclareMathOperator{\kos}{Kos}
\DeclareMathOperator{\Tordim}{Td}
\newcommand{\TOR}{\operatorname{\mathcal{TOR}}}
\newcommand{\Tor}{\operatorname{Tor}}
\DeclareMathOperator{\Sym}{Sym}
\DeclareMathOperator{\Gr}{Gr}
\DeclareMathOperator{\Codim}{Codim}
\DeclareMathOperator{\hight}{ht}
\DeclareMathOperator{\Spec}{Spec}
\DeclareMathOperator{\Supp}{Supp}
\DeclareMathOperator{\Supph}{Supph}
\DeclareMathOperator{\op}{op}
\DeclareMathOperator{\a1}{\mathbb{A}^1}
\DeclareMathOperator{\p1}{\mathbb{P}^1}
\newcommand{\gm}{{\mathbf{G}}_m}
\newcommand{\ga}{{\mathbf{G}}_a}
\DeclareMathOperator{\Hom}{Hom}
\DeclareMathOperator{\HOM}{\mathcal{HOM}}
\DeclareMathOperator{\Homo}{H}
\DeclareMathOperator{\M}{M}
\DeclareMathOperator{\mm}{\mathfrak{m}}
\DeclareMathOperator{\AAA}{\mathcal{A}}
\DeclareMathOperator{\BBB}{\mathcal{B}}
\DeclareMathOperator{\CCC}{\mathcal{C}}
\DeclareMathOperator{\DDD}{\mathcal{D}}
\DeclareMathOperator{\EEE}{\mathcal{E}}
\DeclareMathOperator{\FFF}{\mathcal{F}}
\DeclareMathOperator{\GGG}{\mathcal{G}}
\DeclareMathOperator{\HHH}{\mathcal{H}}
\DeclareMathOperator{\III}{\mathcal{I}}
\DeclareMathOperator{\LLL}{\mathcal{L}}
\DeclareMathOperator{\MMM}{\mathcal{M}}
\DeclareMathOperator{\NNN}{\mathcal{N}}
\DeclareMathOperator{\OOO}{\mathcal{O}}
\DeclareMathOperator{\SSS}{\mathcal{S}}
\DeclareMathOperator{\TTT}{\mathcal{T}}
\DeclareMathOperator{\XXX}{\mathcal{X}}
\DeclareMathOperator{\YYY}{\mathcal{Y}}
\DeclareMathOperator{\QQQQ}{\mathbb{Q}}
\DeclareMathOperator{\Ab}{\mathbf{Ab}}
\newcommand{\Perf}{\operatorname{\bf Perf}}
\newcommand{\sPerf}{\operatorname{\bf sPerf}}
\DeclareMathOperator{\qis}{qis}
\DeclareMathOperator{\on}{\ \mathrm{on}\ }
\DeclareMathOperator{\naive}{naive}
\DeclareMathOperator{\DE}{\mathbf{DE}}
\DeclareMathOperator{\Nis}{Nis}
\DeclareMathOperator{\Zar}{Zar}
\newcommand{\cf}{\textrm{cf.}\;}
\newcommand{\onto}[1]{\stackrel{#1}{\to}}
\newcommand{\cA}{\mathcal{A}}
\newcommand{\cC}{\mathcal{C}}
\renewcommand{\cD}{\mathcal{D}}
\newcommand{\cE}{\mathcal{E}}
\newcommand{\cF}{\mathcal{F}}
\newcommand{\cG}{\mathcal{G}}
\newcommand{\Ch}{\operatorname{\bf Ch}}
\newcommand{\cI}{\mathcal{I}}
\newcommand{\cK}{\mathcal{K}}
\renewcommand{\cL}{\mathcal{L}}
\newcommand{\cM}{\mathcal{M}}
\newcommand{\cO}{\mathcal{O}}
\newcommand{\cP}{\mathcal{P}}
\newcommand{\cT}{\mathcal{T}}
\newcommand{\DM}{\operatorname{DM}}
\newcommand{\SH}{\operatorname{\bf{SH}^{\a1}}}
\newcommand{\Mod}{\operatorname{\bf Mod}}
\newcommand{\ModX}{\Mod(X)}
\newcommand{\MZ}{\mathbb{MZ}}
\newcommand{\opcit}{\textit{op.\,cit.}}
\newcommand{\OX}{\cO_X}
\newcommand{\OXx}{\cO_{X,x}}
\newcommand{\Eb}{E^{\bullet}}
\newcommand{\Pb}{P^{\bullet}}
\newcommand{\Perfqc}{\Perf_{\mathrm{qc}}}
\newcommand{\PerfX}{\Perf(X)}
\newcommand{\PerfXonY}{\Perf(\XonY)}
\newcommand{\PerfqcXonY}{\Perfqc(\XonY)}
\newcommand{\Qcoh}{\operatorname{\bf Qcoh}}
\newcommand{\QcohX}{\Qcoh(X)}
\newcommand{\QcohXonY}{\Qcoh(\XonY)}
\newcommand{\qpsm}{\operatorname{\bf qpsm}}
\newcommand{\qpsmcor}{\operatorname{\bf qpsmcor}}
\newcommand{\ssm}{\smallsetminus}
\newcommand{\inj}{\hookrightarrow}
\newcommand{\Wt}{\operatorname{\bf Wt}}
\newcommand{\Wtr}{\Wt^r}
\newcommand{\WtrXonY}{\Wtr(\XonY)}
\newcommand{\XonY}{X\!\on\!Y}
\newcommand{\Z}{\mathbb{Z}}
\newcommand{\ztr}{\mathbb{Z}_{tr}}
\newcommand{\UU}{\mathfrak{U}}
\def\sn{\smallskip\noindent}
\def\mn{\medskip\noindent}
\title{Deforming motivic theories I:\\
Pure weight perfect Modules on divisorial schemes}
\author{Toshiro Hiranouchi\footnote{
Supported by the JSPS Fellowships for Young Scientists.} and 
Satoshi Mochizuki\footnote{This research is supported by JSPS core-to-core program 18005
}}
\date{}
\begin{document}

\maketitle

\begin{abstract}
In this paper, 
we introduce a notion of weight $r$ 
pseudo-coherent Modules 
associated to a regular closed immersion 
$i:Y \inj X$ 
of codimension $r$, 
and prove that 
there is 
a canonical derived Morita equivalence 
between 
the DG-category of perfect complexes 
on a divisorial scheme $X$
whose cohomological support are 
in $Y$
and the DG-category of bounded complexes 
of weight $r$ pseudo-coherent $\OX$-Modules 
supported on $Y$. 
The theorem implies that 
there is the canonical isomorphism 
between 
the Bass-Thomason-Trobaugh non-connected $K$-theory 
\cite{TT90}, \cite{Sch06} 
(resp. the Keller-Weibel cyclic homology \cite{Kel98}, \cite{Wei96}) 
for the immersion and 
the Schlichting non-connected $K$-theory \cite{Sch04} associated to 
(resp. that of) 
the exact category 
of weight $r$ pseudo-coherent Modules. 
For the connected $K$-theory case, 
this result is just Exercise 5.7 in \cite{TT90}. 
As its application, 
we will decide on a generator 
of the topological filtration 
on the non-connected $K$-theory 
(resp. cyclic homology theory) 
for affine Cohen-Macaulay schemes.
\end{abstract}

\section{Introduction}

Since the word \lq\lq motive theory\rq\rq\ is 
an ambiguous word, 
in this Introduction, 
as motive theory, 
we restrictedly mean 
axiomatic studying (co)homology theories 
over algebraic varieties 
by enriching morphisms between 
algebraic varieties 
with adequate equivalence relations. 
Traditionally, 
to construct motivic categories, 
we used to choose 
certain classes of algebraic cycles 
as morphisms spaces 
and 
consider various equivalence relations on them, 
for example 
rational, numerical and algebraic relations and so on. 
In practice, 
the difficulty of handling a motivic theory 
is concentrating on moving algebraic cycles suitably 
in an appropriate equivalence relation class 
(see the proficient survey \cite{Lev06}). 
A problem of this type is so-called \lq\lq moving lemma\rq\rq\ 
and solving 
by deliberating on geometry over a base (field). 
In this paper, 
we give a first step of 
building up a motivic theory which does not rely upon geometry 
over a base 
by replacing (moduli spaces of) algebraic cycles 
with (roughly speaking, moduli non-commutative spaces of) 
pseudo-coherent complexes 
and considering an equivalence relation 
on them 
as the derived Morita equivalences 
(Compare \cite{Kon07} \S 4, \cite{Tau07}).

The aim of this paper is 
to introduce the notion of ({\it{Thomason-Trobaugh}}) {\it{weight}} 
on the class of perfect Modules 
on schemes inspired 
by the work of Thomason and Trobaugh in \cite{TT90}. 
To explain this more precisely, 
let $X$ be a divisorial scheme 
(in the sense of \cite{BGI71}, 
\cf Def.~\ref{divisorial df}) 
and $i:Y \inj X$ 
a regular closed immersion of codimension $r$. 
A pseudo-coherent $\OX$-Module 
is said to be {\it{of}} ({\it{Thomason-Trobaugh}}) {\it{weight}} $r$ 
supported on $Y$ 
if it is of Tor-dimension $\leqq r$ 
and supported on $Y$. 
Here the word \lq\lq weight\rq\rq\ 
is coming from 
the weight of the Adams operations in \cite{GS87} 
and a more systematic study will be done in \cite{Moc08}. 
We denote by $\WtrXonY$ 
the exact category of pseudo-coherent $\OX$-Modules 
of weight $r$ supported 
on the subspace $Y$ 
and $\PerfXonY$ 
the exact category of perfect complexes 
on $X$ whose cohomological support 
are in $Y$. 
We shall prove the following theorem:

\begin{main}[Th.~\ref{main theorem}]
There is a canonical derived Morita equivalence between 
the exact category of bounded complexes of $\WtrXonY$ and 
$\PerfXonY$.
\end{main}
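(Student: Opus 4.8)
\bigskip
\noindent\emph{Plan of proof.}
The canonical functor is the evident inclusion: a pseudo-coherent $\OX$-Module of $\Tor$-dimension $\leqq r$ is perfect (\cite{BGI71}, \cite{TT90}), and one supported on $Y$ is then a perfect complex with cohomological support in $Y$; the resulting exact functor $\WtrXonY\inj\PerfXonY$, totalized, gives $\Phi\colon\Ch^b(\WtrXonY)\to\PerfXonY$, and I claim \emph{this} $\Phi$ is a derived Morita equivalence. Identifying (Gillet--Waldhausen) the derived category of $\Ch^b(\WtrXonY)$ with $D^b(\WtrXonY)$, and noting that $\PerfXonY$ is its own derived category and is idempotent complete since $X$ is divisorial, hence quasi-compact quasi-separated, it suffices to prove that the induced triangulated functor $\bar\Phi\colon D^b(\WtrXonY)\to\PerfXonY$ is (i) fully faithful and (ii) has essential image generating $\PerfXonY$ as a thick subcategory; then $\bar\Phi$ extends to an equivalence $D^b(\WtrXonY)^{\natural}\isoto\PerfXonY$ on idempotent completions. (One can instead package (i), (ii) and the easy fact that $\Phi$ reflects quasi-isomorphisms --- which holds because $\WtrXonY$ is closed under kernels of admissible epimorphisms, so acyclicity of a bounded complex of weight-$r$ Modules is tested on underlying $\OX$-Modules --- as the hypotheses of the Thomason--Trobaugh--Waldhausen approximation theorem.)

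For (ii), the geometric heart, I would argue as follows. Fix $P\in\PerfXonY$; divisoriality lets us replace $P$ by a strict bounded complex $E^{\bullet}$ of vector bundles on $X$ (\cite{TT90}, 2.3.1). Write $\III\subset\OX$ for the ideal of $Y$. First, $\OX/\III^{\,n}\in\WtrXonY$ for all $n$: it is pseudo-coherent and topologically supported on $Y$, and the $\III$-adic filtration presents it as an iterated extension of the $\III^{\,j}/\III^{\,j+1}=i_{*}\Sym^{j}_{\cO_Y}(\III/\III^{2})$, $0\leqq j\leqq n-1$, each the pushforward of a vector bundle on $Y$ and hence of $\OX$-$\Tor$-dimension $\leqq r$ by the Koszul resolution; extensions preserve the bound, so $\Tordim_{\OX}(\OX/\III^{\,n})\leqq r$. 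Consequently each $E^{j}\otimes_{\OX}\OX/\III^{\,n}$ is a weight-$r$ Module supported on $Y$ (the $E^{j}$ being flat), so $C_{n}^{\bullet}:=E^{\bullet}\otimes_{\OX}\OX/\III^{\,n}\in\Ch^b(\WtrXonY)$ and $\Phi(C_{n}^{\bullet})\simeq P\otimes^{\mathbf{L}}_{\OX}\OX/\III^{\,n}$. It remains to see that the canonical arrow $P\to P\otimes^{\mathbf{L}}_{\OX}\OX/\III^{\,n}$ is split for $n\gg0$. For this I would apply $\Hom_{D(\OX)}(\OX,-)$ to the standard presentation of local cohomology as a colimit,
\[
\colim_{n}\,\mathbf{R}\HOM_{\OX}\!\bigl(\OX/\III^{\,n},\,\mathbf{R}\HOM_{\OX}(P,P)\bigr)\ \isoto\ \mathbf{R}\underline{\Gamma}_{Y}\,\mathbf{R}\HOM_{\OX}(P,P)\ \isoto\ \mathbf{R}\HOM_{\OX}(P,P),
\]
the last map an isomorphism because $\mathbf{R}\HOM_{\OX}(P,P)$ is supported on $Y$; since $\OX$ is a compact object of $D(\Qcoh(X))$ and $\mathbf{R}\Gamma(X,-)$ commutes with filtered colimits on a quasi-compact quasi-separated scheme, this gives $\colim_{n}\Hom_{D(\OX)}(P\otimes^{\mathbf{L}}_{\OX}\OX/\III^{\,n},P)\isoto\Hom_{D(\OX)}(P,P)$, so $\id_{P}$ factors through $P\to P\otimes^{\mathbf{L}}_{\OX}\OX/\III^{\,n}$ for $n$ large. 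Hence $P$ is a retract of $\Phi(C_{n}^{\bullet})$; as $\bar\Phi$ is triangulated its image is closed under shifts and cones, and every object of $\PerfXonY$ is then a retract of one in the image, so the image generates $\PerfXonY$ thickly.

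For (i) I would compare $\Ext$-groups: as weight-$r$ Modules sit in degree $0$ and negative $\Ext$ vanishes in both targets, it is enough that $\Ext^{k}_{\WtrXonY}(M,N)\to\Ext^{k}_{\OX}(M,N)$ be bijective for $M,N\in\WtrXonY$ and $k\geqq0$. View $\WtrXonY$ as a full exact subcategory of the abelian category $\mathcal{A}$ of pseudo-coherent $\OX$-Modules supported on $Y$ (\cite{BGI71}). It is closed under extensions and under kernels of $\mathcal{A}$-epimorphisms between its objects --- the $\Tor$-dimension of such a syzygy drops by one, by the long exact $\Tor$-sequence --- and it \emph{generates} $\mathcal{A}$: every $\mathcal{F}\in\mathcal{A}$ is killed by some $\III^{\,n}$ ($X$ quasi-compact, $\III$ of finite type), hence is a coherent module on $Y_{n}:=V(\III^{\,n})$, itself divisorial as a closed subscheme of $X$, so $\mathcal{F}$ is an admissible quotient of $(i_{n})_{*}\mathcal{G}$ for some vector bundle $\mathcal{G}$ on $Y_{n}$, and $(i_{n})_{*}\mathcal{G}\in\WtrXonY$ because $\OX/\III^{\,n}$ has $\Tor$-dimension $\leqq r$. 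By Keller's criterion for such subcategories (the derived-category resolution formalism underlying \cite{TT90}), these properties force $D^b(\WtrXonY)\to D^b(\mathcal{A})\subset D(\Qcoh(X))$ to be fully faithful, whence the same for $\bar\Phi$. Together (i) and (ii) give the asserted canonical derived Morita equivalence --- and thereby the comparisons of non-connected $K$-theory and of cyclic homology announced in the abstract.

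The step I expect to be the real obstacle is the globalization in (ii): there is no global Koszul resolution of $\cO_Y$, since $\III/\III^{2}$ need not be globally free, so one cannot simply tensor a vector-bundle model of $P$ against a global Koszul complex. The device above replaces it by tensoring against the honest global sheaf $\OX/\III^{\,n}$, which is itself a weight-$r$ Module, and by certifying --- through the local-cohomology colimit and the compactness of $\OX$ --- that this lossy operation retracts onto $P$. Divisoriality is used twice, to obtain the model $E^{\bullet}$ and again in (i) to resolve coherent sheaves on the thickenings $Y_{n}$ by vector bundles; and the fact that $P$ is recovered only up to a direct summand is precisely why the conclusion is a derived Morita equivalence rather than an equivalence of bounded derived categories.
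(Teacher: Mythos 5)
Your argument takes a genuinely different route from the paper's. The paper factors the inclusion as $\Ch^b(\WtrXonY)\to\BBB\to\CCC\to\PerfqcXonY\to\PerfXonY$ through intermediate Waldhausen categories (bounded-above weight-$r$ perfect complexes; termwise $Y$-supported quasi-coherent perfect complexes; $Y$-cohomologically-supported quasi-coherent perfect complexes) and verifies Thomason's $(\DE)$ or $(\DE)^{\op}$ approximation condition link by link, so that full faithfulness of the derived functor falls out rather than requiring a separate argument. Your step (ii) --- tensoring a strict model $E^\bullet$ of $P$ against $\OX/\III^n$ and extracting a retraction by feeding the colimit presentation of $R\Gamma_Y$ into $R\HOM(P,P)$ and using that $R\Gamma(X,-)$ commutes with filtered colimits of quasi-coherent complexes on a quasi-compact quasi-separated scheme --- is an attractive compression of the paper's steps $\beta$ (the ``enough objects to resolve'' lemma, \cite{TT90} 1.9.5) and $\gamma$ ($R\Gamma_Y$ is an inverse on $Y$-supported complexes) into one stroke, and together with the idempotent completeness of $D^b(\WtrXonY)$ (Balmer--Schlichting) it does deliver essential surjectivity once fullness is in place.

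The gap is in (i). For a general divisorial scheme the pseudo-coherent $\OX$-Modules supported on $Y$ do \emph{not} form an abelian category: pseudo-coherence is not stable under taking kernels unless the structure sheaf is a coherent sheaf of rings, which is much stronger than the ``coherent $=$ quasi-compact and quasi-separated'' standing hypothesis here. In particular your $\mathcal{A}$ is not a Serre subcategory of $\QcohX$, so neither Keller's resolution criterion applied to $\WtrXonY\subset\mathcal{A}$ nor the identification $D^b(\mathcal{A})\subset D(\QcohX)$ is available as stated. This is precisely the difficulty the paper's Proposition~\ref{enough objects to resolve prop} is designed to avoid: the resolution lemma is run inside the honest abelian category $\QcohXonY$, and one resolves only those complexes that actually appear, never comparing $\Ext$-groups across two exact categories. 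Over a noetherian $X$, your $\mathcal{A}$ is coherent sheaves supported on $Y$, which is abelian, and (i) goes through; in the stated generality the fullness argument has to be reorganized, and doing so essentially collapses onto the paper's chain.
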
 

As alluded to above, 
it can be considered 
as one of a variant 
of \lq\lq moving lemma\rq\rq. 
It might sound a new flavored theory, 
but the methods of proving Theorem~\ref{main theorem} 
are classical, standard 
and almost all of them 
were established 
by Grothendieck school. 
For example, 
Verdier's coherator theory (Prop.~\ref{coherator prop}), 
Ilusie's global resolution theorem (Th.~\ref{global resol thm}), 
Grothendieck's local cohomology theory (Lem.~\ref{local cohomology lem})
and so on. 
The theorem implies that 
there is a canonical isomorphism 
between the Bass-Thomason-Trobaugh non-connected $K$-theory $K^B(\XonY)$ 
\cite{TT90}, \cite{Sch06} 
(resp. the Keller-Weibel cyclic homology 
$HC(\XonY)$ \cite{Kel98}, \cite{Wei96}) 
and 
the Schlichting non-connected $K$-theory \cite{Sch04} associated to 
(resp. that of) the exact category 
of weight $r$ pseudo-coherent $\OX$-Modules 
$K^S(\Wt(\XonY))$
(resp. $HC(\Wt(\XonY))$). 
That is, we have isomorphisms 
\begin{gather*}
  K_q^B(\XonY) \simeq K^S_q(\WtrXonY), \\
  HC_q(\XonY) \simeq HC_q(\WtrXonY),
\end{gather*}
for each $q\in \Z$.
For the connected $K$-theory
this result is nothing other than Exercise~5.7 in \cite{TT90}. 
For Grothendieck groups ($q=0$),
there is a detailed proof 
if $X$ is the spectrum of a Cohen-Macaulay local ring 
and $Y$ is the closed point of $X$ 
(\cite{RS03}, Prop.~2). 
For $K$-theory, 
as mentioned in Exercise 5.7, 
this problem is related with the works
\cite{Ger74}, \cite{Gra76} and \cite{Lev88}.
Namely the problem about 
describing the homotopy fiber 
of $K^B(X) \to K^B(X \ssm Y)$ 
(or rather than $K^Q(X) \to K^Q(X \ssm Y)$) 
by using the $K$-theory of a certain exact category. 
As described in \cite{Ger74}, 
there is an example due to Deligne which 
suggests difficulty of the problem 
for a general closed immersion. 
Conversely,
the example indicate that for an appropriate scheme $X$, 
there is a good class of pseudo-coherent $\OX$-Modules. 
That is, Modules of pure weight. 
This concept is intimately related 
to Weibel's $K$-dimensional conjecture \cite{Wei80} 
(see Conj.~\ref{Weibel conj}), 
Gersten's conjecture \cite{Ger73} 
and its consequences. 
These subjects will be treated 
in \cite{HM08}, \cite{Moc08}.
Notice that 
there are different notions of pure weight 
by Grayson \cite{Gra95} 
and Walker \cite{Wal00} and
these two notions are compatible 
in a particular situation \cite{Wal96}. 
In a future work, 
the authors hope 
to compare the Walker weight 
with the Thomason-Trobaugh one 
by utilizing 
the ({\it{equidimensional}\/}) {\it{bivariant algebraic $K$-theory}} 
\cite{GW00}. 

Now we explain the structure of the paper. 
In \S 2, we describe to our motivational picture. 
After reviewing the fundamental facts in \S 3, 
we will define the notion of weight 
and state the main theorem in \S 4. 
The proof of the main theorem  will be given in \S 5. 
Finally we will give applications of the main theorem in \S 6.

\mn
{\it Convention.} 
Throughout this paper, 
we use the letter $X$ to denote a scheme. 
A {\it complex}\/ means 
a chain complex whose boundary morphism is increase level 
of term by one. 
For fundamental notations of chain complexes, 
for example mapping cone and mapping cylinder etc..., 
we follow the book \cite{Wei94}. 
For an additive category $\cA$, 
we denote by 
$\Ch(\cA)$ the category of chain complexes in $\cA$. 
The word ``$\OX$-Module'' means 
a sheaf on $X$ which is a sheaf of modules 
over the sheaf of rings $\OX$. 
We denote by 
$\Mod(X)$ the abelian category of $\OX$-Modules and 
$\QcohX$ the category of quasi-coherent $\OX$-Modules. 
An {\it algebraic vector bundle} over the scheme $X$ 
is a locally free $\OX$-Module of finite rank and 
we denote by 
$\cP(X)$ the category of algebraic vector bundles. 
In particular a {\it line bundle} 
is an algebraic vector bundle of rank one (= an invertible sheaf). 
For the terminologies of algebraic $K$-theory, 
we follow to the notations in \cite{Sch07}. 
For example, 
for a complicial biWaldhausen category $\cC$, 
we denote its associated derived category by 
$\cT(\cC)$ and for an exact category $\cE$, 
we denote its associated derived category $\cT(\Ch(\cE))$ by $\cD(\cE)$. 
Finally for the $\a1$-motivic theory, we follow the notations in \cite{MVW06}. 

\mn
{\it Acknowledgments.} 
The second author is thankful to Masana Harada, 
Charles A. Weibel for giving several comments to Exercise 5.7 in \cite{TT90}, 
Marco Schlichting for teaching about elementary questions of 
negative $K$-theory 
via e-mail, 
Paul Balmer for bringing him to the preprint \cite{Bal07} 
and Mark E. Walker for sending the thesis \cite{Wal96} to him.

\section{Conjectural picture}

In this section, 
we will give a conjectural perspective 
of {\it{deforming motivic theories}}. 
This section is logically independent of the others.

\subsection{Analogies between multiplicative and additive motivic theories}
{\label{Analogies}}

As in the Introduction, 
as a motive theory, 
we prefer to mean 
axiomatic studying of 
(co)homology theories 
over algebraically geometric objects
by enriching morphisms 
between 
algebraically geometric objects 
with adequate equivalence relations. 
So there should be many motivic theories depending 
on our treating of algebraically geometric objects and (co)homology theories. 
For example, if we deal with Weil cohomology theories, 
the classical motive theory is fitting 
for our purpose \cite{Kle68}. 
If we handle $\a1$-homotopy invariant (co)homology theories, 
the motivic homotopy theory 
in the sense of Voevodsky is appropriate \cite{Voe00}. 
If we consider cohomology theories 
which has the Gersten resolution, 
the Bloch-Ogus(-Gabber) theory 
\cite{BO74}, \cite{CHK97} is suitable. 
Moreover there are other motivic theories 
for example \cite{KS02}, \cite{KL07}. 
It might be believed that 
there is \lq\lq the\rq\rq\ motive theory 
which is omniscient 
and unifying every motivic theories. 
But as in the following example, 
there are motivic theories 
which are not seemed 
to be compatible with each other.

\begin{ex}
If we prefer to give 
a motivic interpretation 
of the Hodge decomposition 
using the cyclic homology theory 
like as \cite{Wei97}, 
or 
if we like to understand 
what is the motive associated 
with the additive group $\ga$ 
like as a generalized $1$-motive \cite{Lau96}, \cite{Ber08}, 
we shall not realize them 
in Voevodsky's motivic world. 
For the cyclic homology theory 
is not $\a1$-homotopy invariant and 
$\ga$ is contractible 
in his motivic category. 
But we have the analogies table 
between additive 
and multiplicative worlds 
as in \cite{Lod03}. 
We would like to extend the table to motivic stage. 
For example, 
the Bloch theorem \cite{Blo86} 
and the Hodge decomposition as in the table below.\\
\\
\begin{tabular}{c|c}
\hline
$\times$ (Voevodsky's motivic theory) & $+$ (additive motivic theory)\\
\hline
$K_n(X)_{\QQQQ}\isoto \underset{p+q=n}{\bigoplus} \Homo_{\MMM}^p(X,\gm^{\otimes q})_{\QQQQ}$ &
$\Homo^n(X,\mathbb{C}) \isoto \underset{p+q=n}{\bigoplus} \Homo^p(X,\ga\otimes \gm^{\otimes q})$ \\
\hline
\end{tabular}
\\
\\
Of course, 
the right hand side above 
is conjectural description. 
(But see \cite{BE03}, \cite{Rul07}, \cite{Par07} and \cite{Par08}). 
In these analogical line, 
following \cite{FT85} and \cite{FT87}, 
we like to call the cyclic homology theory 
the {\it{additive algebraic $K$-theory}}.
\end{ex}

We shall also notice the fact 
that there are real mathematical problems 
stretching away
both additive and multiplicative worlds. 
For example, 
Vorst's conjecture \cite{Vor79}. 
Actually the conjecture is proved 
in a special case 
by frequently utilizing 
both multiplicative 
and additive motivic techniques \cite{CHW06}. 
In the next subsection 
we propose another similarly kind problems.

\subsection{Motivic modules and Weil reciprocity law}
{\label{Motivic modules}}

Classically there is the following problem.

\begin{prob}
{\label{somekawa problem}}
Let $G_1,\ldots,G_r$ be commutative group varieties 
over a base field $k$. 
Then we have the correspondence
$$
 Z_0(G_1 \times_k G_2 \times_k \cdots \times_k G_r) 
\leftrightarrow
\underset{L/k:\substack{\text{finite} \\ \text{extension}}}
{\bigoplus} 
G_1(L) \otimes_\Z G_2(L) \otimes_\Z \cdots \otimes_\Z G_r(L)
$$
where $Z_0(?)$ means 
the group of zero cycles. 
The problem is the following: \\
{\it{What are the suitable equivalence relations 
making assignment above isomorphism.}}
$$
 Z_0(G_1 \times_k G_2 \times_k \cdots \times_k G_r) /\sim
\isoto
\underset{L/k:\substack{\text{finite} \\ \text{extension}}} 
{\bigoplus}
G_1(L) \otimes_\Z G_2(L) \otimes_\Z \cdots \otimes_\Z G_r(L)/\sim 
.$$
\end{prob}

\begin{his}
{\label{somekawa his note}}
If we assume all $G_1,\ldots,G_r$ are semi-abelian varieties, 
then there are suitable candidates for
equivalence relations above. 

\sn
(i) 
In the left hand side, 
the suitable equivalence relation should 
come from the tensor products
as $1$-motives 
in the sense of \cite{Del74}.
That is, the left hand side should be 
replaced with
$$
  \Gamma(\Spec k, G_1\otimes \cdots \otimes G_r) 
= Z_0(G_1 \times_k G_2 \times_k \cdots \times_k G_r)/ \sim
$$
where tensor product are taken as $1$-motives. 

\sn
(ii) 
In the right hand side, 
Kazuya Kato proposed that 
the suitable equivalence relation should
be the following two relations. 

\sn
$\bullet$ Projection formula for norms.\\
$\bullet$ Weil reciprocity law for semi-abelian varieties.\\

We will write the left hand side modulo equivalence relations above as

$$
 K(k,G_1,\ldots,G_r)
$$
and called it 
{\it{Milnor $K$-group associated with $G_1,\ldots,G_r$}} 
(see for example \cite{Som90}, \cite{Kah92}).
The naming coming from the following isomorphism.
$$
 K(k,\overbrace{\gm,\ldots,\gm}^{r}) \isoto K^M_r(k).
$$
\end{his}

\begin{ob}
(i) 
(At least after tensoring with $\QQQQ$,) 
the tensor product as $1$-motives is equal to the tensor 
product in the $\a1$-motivic category $\DM(k)$ 
(see for example \cite{Org04}, \cite{BK07}). 

\sn
(ii) 
The projection formula relation above is 
one of the consequence of presheaf with transfer, 
that is, 
there is the following statement (see for example \cite{Org04}): 

\sn
{\it{Every commutative group variety 
over a field $k$ 
is considered as a functor
$$
 \qpsmcor(k) \to \Ab
$$
where $\qpsmcor(k)$ is the category 
of quasi-projective smooth varieties 
whose morphisms are 
finite surjective correspondences.}} 

\sn
(iii) 
If all $G_i$ are $1$-dimensional semi-abelian varieties, 
we have the following formula,
$$
 \Gamma(\Spec k,G_1\otimes,\cdots \otimes G_r) \isoto 
K(k,G_1,\ldots,G_r).
$$
This is the affirmative answer for question above 
(see for $\gm$ case \cite{SV00} and for elliptic curve case \cite{Moc06}).

\sn
(iv) The reason why semi-abelian varieties 
are fit in Voevodsky's theory is 
that semi-abelian varieties 
are $\a1$-homotopy invariant presheaves 
with transfers. 
So we shall say $\a1$-homotopy 
invariant presheaves with transfers 
as a {\it{motivic modules}}. 
Then we can re-write the statement
in Historical Notes~\ref{somekawa his note} (i) as follows. 

\sn
{\it{In the left hand side, 
the suitable equivalence relation should 
come from the tensor products as
motivic modules.}}
\end{ob}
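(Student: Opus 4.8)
The plan is to derive all four items from the interplay of three theories — Deligne's category of $1$-motives, Voevodsky's triangulated category $\DM(k)$, and the Somekawa-type Milnor $K$-groups — by realizing everything inside $\DM(k)_{\QQQQ}$ (with integral coefficients wherever a presentation is available) and matching monoidal structures and presentations by generators and relations. Items (i) and (ii) provide the formal input, item (iii) is the substantive computation, and item (iv) is a naming convention requiring no argument.

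For (i), I would invoke the fully faithful embedding of the category of $1$-motives up to isogeny into $\DM_{\mathrm{gm}}(k)_{\QQQQ}$ of \cite{Org04} (see also \cite{BK07}), sending a $1$-motive $M$ to its associated motivic complex, concentrated in degrees $[0,1]$. Both categories are symmetric monoidal — by the Deligne tensor product of $1$-motives on one side, by the Voevodsky tensor product post-composed with the truncation back into degrees $[0,1]$ on the other — and the point is that the embedding is monoidal. By $\QQQQ$-linearity and dévissage it suffices to compare the two products on the three building blocks of a $1$-motive up to isogeny, namely $\gm$, an abelian variety, and $\Z$, where the comparison is a homology computation with the explicit complexes $\ztr(-)$ and their tensor products; the only delicate point is the truncation, harmless rationally because the relevant higher $\Tor$-sheaves of semi-abelian varieties vanish after $\mathbb{A}^1$-localization. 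For (ii), the projection formula is essentially formal: a commutative group variety $G$ over $k$ is a presheaf with transfers on $\qpsmcor(k)$, so a finite extension $L/k$ yields a norm (transfer) map $N_{L/k}\colon G(L)\to G(k)$, and the composition and base-change identities for finite correspondences — exactly as exploited in \cite{SV00} — give the compatibility of $N_{L/k}$ with the symbol multiplication, which is precisely the relation imposed in the definition of $K(k,G_1,\dots,G_r)$.

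The heart of the matter is (iii), which I would prove in three steps. First, identify $\Gamma(\Spec k,\,G_1\otimes\cdots\otimes G_r)$ — global sections of the Deligne tensor product of the $G_i$ regarded as $1$-motives — with a $\Hom$-group in $\DM(k)$, namely $\Hom_{\DM(k)}(\Z,\,M(G_1)\otimes\cdots\otimes M(G_r))$ on the reduced motives, using (i) to transport the $1$-motive tensor product to the Voevodsky one. Second, compute this $\Hom$-group by resolving each $M(G_i)$ by its Suslin complex, forming the total tensor product, and reading off the hypercohomology; because every $G_i$ is $1$-dimensional the resulting multicomplex is small enough that its cohomology in the weight and degree matching $r$ is controlled. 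Third, match the outcome, generator by generator and relation by relation, with Somekawa's presentation of $K(k,G_1,\dots,G_r)$: the generators $\{x_1,\dots,x_r\}_{L/k}$ correspond to closed points of $G_1\times_k\cdots\times_k G_r$ with residue field $L$, and the two families of relations — projection formula for norms and Weil reciprocity for semi-abelian varieties — are exactly what the differentials of the Suslin multicomplex impose on those generators. For $G_1=\cdots=G_r=\gm$ this is the Suslin–Voevodsky theorem \cite{SV00}, for the elliptic-curve case it is \cite{Moc06}, and the general $1$-dimensional semi-abelian case is assembled from these by dévissage along $0\to T\to G\to A\to 0$. I expect the third step to be the main obstacle: one must show that \emph{no relations beyond} projection formula and Weil reciprocity survive — that the higher differentials and higher $\Tor$-contributions in the total complex either vanish or collapse onto these two — and that the reciprocity law appears in precisely the normalization used in the Somekawa definition. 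This is the place where $1$-dimensionality of the $G_i$ is genuinely needed, and where the difference between the generalized reciprocity law for semi-abelian varieties and the classical reciprocity law on a complete curve must be tracked with care.

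Finally (iv) requires no argument beyond recording that an $\mathbb{A}^1$-homotopy invariant presheaf with transfers is called a \emph{motivic module}; the restatement of Historical Note~\ref{somekawa his note}(i) then follows at once from (i) together with the fact that semi-abelian varieties are homotopy invariant presheaves with transfers.
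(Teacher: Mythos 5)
The paper offers no proof of this Observation at all: it sits inside \S 2, which the authors announce is ``logically independent of the others,'' and each item is a remark delegating the actual mathematics to the cited references (\cite{Org04}, \cite{BK07} for (i)--(ii), \cite{SV00} and \cite{Moc06} for (iii)), while (iv) is a naming convention. So there is no internal argument for you to match; you are reconstructing proofs the paper deliberately defers.

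As to the substance of your sketch, parts (i), (ii), and (iv) are plausible high-level outlines of what the cited literature does. Part (iii), however, contains a step that cannot work as written. You propose to ``assemble'' the general one-dimensional semi-abelian case from the $\gm$ case and the elliptic-curve case ``by d\'evissage along $0\to T\to G\to A\to 0$.'' But a one-dimensional semi-abelian variety has $\dim T+\dim A=1$, so each $G_i$ is already either a torus or an elliptic curve, and the extension you want to d\'evissage along is trivial. The actual difficulty in (iii) is of a different shape: the $G_i$ need not all be of the same type, so one must handle mixed products such as $\gm\times E$, and one must also deal with non-split one-dimensional tori, neither of which reduces to \cite{SV00} or \cite{Moc06} by the extension-sequence argument you invoke. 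The paper's own phrasing --- ``see for $\gm$ case \cite{SV00} and for elliptic curve case \cite{Moc06}'' --- is more cautious than a theorem: it records the two cases that are known and treats the general statement as aspirational. Your step 2 (computing $\Hom_{\DM(k)}(\Z,M(G_1)\otimes\cdots\otimes M(G_r))$ via a Suslin multicomplex) and step 3 (matching generators and relations with Somekawa's presentation) are exactly where the genuine new content would lie, and, as you yourself flag, showing that no relations beyond projection formula and Weil reciprocity survive is the crux; you should not paper over the mixed-type case with a d\'evissage that does not exist.
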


In the observation and \S~\ref{Analogies}, 
we are interested in the following question. 

\sn
{\it{What is a good notion of motivic modules including $\ga$?}}

\begin{rem}
(i) (\cf \cite{RO06}) The category of motives 
is reinterpreted in the context of stable motivic homotopy theory
by R\"ondings and {\O}stv{\ae}r as follows. 
Let $\MZ \in \SH (k)$ be the motivic Eilenberg-Maclane
spectrum. Then $\MZ$ is considered as 
a ring object in $\SH (k)$ in the natural way 
and we have the following identity:
$$
 \Mod(\MZ) \isoto
 \DM(k)
$$
where we assume that characteristic of $k$ is zero. 
This means $\DM(k)$ is actually 
\lq\lq the category of motivic modules\rq\rq\ in some sense. 
Notice that 
if a presheaf of abelian groups 
on the category of quasi-projective smooth schemes 
has an action of $\MZ$, 
this means that $F$ can extend to a presheaf on $\qpsmcor(k)$. 

\sn
(ii) 
Several authors are attempting to describe 
$\Gamma(\Spec k, G_1 \otimes \cdots \otimes G_r)$ 
as generators and relations. 
In this point, 
relations are related with 
the functional equations of special functions
associated with $G_i$. 
For example, 
if all $G_i$ are equal to $\gm$, 
the special function is the polylogarithms \cite{Gon94} 
and so on. 
Therefore it is quite surprised 
that the relations 
of $K(k,G_1,\ldots,G_r)$ does not depend
on the $G_i$. 
The Weil reciprocity law is 
implicitly controlling the functional equations of 
special functions associated with $G_i$. 
So it is important that we shall ask what is a meaning of 
the Weil reciprocity law 
in the context of Voevodsky's motivic theory.
\end{rem}

We can state a generalization of 
the Weil reciprocity law 
which is called {\it{Motivic reciprocity law}}. 
Let $k$ be a field 
which satisfies the resolution of singularity assumption.

\begin{thm}[\cite{Moc06}]
For a field extension 
of transcendental degree one $K/k$, 
the composition of
\begin{equation}
{\label{MRL}}
\M(\Spec k)(1)[1]
\onto{\Sigma N_{k(v)/k}(1)[1]}
\underset{v:\substack{\text{place} \\ \text{of } K/k}}
{\tilde{\prod}}
\M(\Spec k(v))(1)[1]
\onto{\tilde{\prod} \partial_v}
\M(\Spec K) 
\end{equation}
is the zero map in the pro-category of $DM(k)$.
\end{thm}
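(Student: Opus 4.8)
The plan is to realise the composite as a morphism that factors through the motive of the smooth projective model of $K/k$, where it dies for formal reasons inside a localisation triangle.

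First I would fix the smooth projective curve $C$ over $k$ with function field $K$ (available under the standing resolution of singularities hypothesis on $k$) and realise $\M(\Spec K)$ as the pro-object $(\M(C\ssm S))_S$, with $S$ ranging over the finite sets of closed points of $C$ and transition maps the restrictions $\M(C\ssm S')\to\M(C\ssm S)$ for $S\subseteq S'$. For each $S$ the purity/localisation triangle reads
\[
\M(C\ssm S)\to\M(C)\onto{\rho_S}\bigoplus_{v\in S}\M(\Spec k(v))(1)[2]\onto{\partial_S}\M(C\ssm S)[1],\qquad\rho_S=(g_v)_{v\in S},
\]
where $g_v$ denotes the Gysin map. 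Since the boundary maps $\partial_v$ are compatible with the transition maps (a residue is unchanged under further localisation away from the other points), these triangles assemble, in pro-$DM(k)$, into a distinguished triangle
\[
\underset{v}{\tilde{\prod}}\,\M(\Spec k(v))(1)[1]\onto{\partial}\M(\Spec K)\onto{j}\M(C)\onto{\rho}\underset{v}{\tilde{\prod}}\,\M(\Spec k(v))(1)[2],
\]
in which $\tilde{\prod}$ is the restricted product realised as the pro-object with $S$-term $\bigoplus_{v\in S}(-)$, and $\partial=\tilde{\prod}\partial_v$ is exactly the second arrow of \eqref{MRL}.

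The heart of the matter is to show that $\Sigma N_{k(v)/k}(1)[1]$ factors through $\M(C)$ via the fundamental class. Let $\iota_C\colon\Z(1)[2]\to\M(C)$ be the Poincaré dual of the structure morphism $\M(C)\to\M(\Spec k)=\Z$; this dual exists because $\M(C)$ is strongly dualisable, with dual $\M(C)(-1)[-2]$, $C$ being smooth and proper over $k$. I would then prove the identity
\[
g_v\circ\iota_C=N_{k(v)/k}(1)[2]\colon\ \Z(1)[2]\to\M(\Spec k(v))(1)[2]
\]
for every closed point $v$ of $C$, where $N_{k(v)/k}$ is the transfer. Granting this, $\rho\circ\iota_C$ is, $S$-term by $S$-term, the tuple $(N_{k(v)/k}(1)[2])_{v\in S}$, so that $\Sigma N_{k(v)/k}(1)[1]=(\rho\circ\iota_C)[-1]=\rho[-1]\circ(\iota_C[-1])$ as morphisms into $\tilde{\prod}_v\,\M(\Spec k(v))(1)[1]$. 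Hence the composite in \eqref{MRL} equals $\partial\circ\rho[-1]\circ(\iota_C[-1])$; but $\partial\circ\rho[-1]=0$, since $\M(C)[-1]\onto{\rho[-1]}\tilde{\prod}_v\,\M(\Spec k(v))(1)[1]\onto{\partial}\M(\Spec K)$ are two consecutive arrows of the rotated triangle above. Therefore the composite vanishes in pro-$DM(k)$.

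The identity $g_v\circ\iota_C=N_{k(v)/k}(1)[2]$ is where I expect the genuine difficulty to lie. I would deduce it by dualising inside $DM(k)$: the Poincaré dual of the Gysin map $g_v$ is the pushforward $\M(\Spec k(v))\to\M(C)$ along $v\inj C$, the dual of $\iota_C$ is the structure morphism $\M(C)\to\Z$, and their composite $\M(\Spec k(v))\to\M(C)\to\Z$ is the structure morphism of $\Spec k(v)$, whose dual is precisely the transfer $N_{k(v)/k}\colon\Z\to\M(\Spec k(v))$. Making this rigorous needs the full duality formalism for $DM(k)$ applied to the proper smooth curve $C$ (this is where the resolution hypothesis on $k$, and in positive characteristic the separability of the residue fields $k(v)/k$, is used), together with some pro-categorical bookkeeping: that the localisation triangles really glue to a distinguished triangle of pro-objects, that $\M(\Spec K)$ and $\tilde{\prod}_v\,\M(\Spec k(v))(1)[\ast]$ are the pro-objects written above, and that the residue maps form a compatible system. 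A more computational route would instead settle the case $C=\p1$, $K=k(t)$ first, where $\M(\p1)=\Z\oplus\Z(1)[2]$ makes every arrow explicit, and then descend along a finite map $C\to\p1$ using the projection formula and the transitivity of transfers and residues; the duality argument bypasses this reduction.
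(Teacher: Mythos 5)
The paper does not actually prove this theorem: it is stated with the citation \cite{Moc06} and no argument is given here, so there is no in-paper proof to compare against. The only hint the present paper offers is the remark immediately afterward, that ``the crucial point of proving the motivic reciprocity law is the existence of functorial Gysin triangles which is proved by D\'eglise,'' and your proposal is squarely in line with that hint: the distinguished triangle you assemble in pro-$DM(k)$ from the localization/Gysin triangles for $C\ssm S\inj C$ is exactly the device that makes the composite vanish for formal reasons. The remaining ingredients you invoke --- realizing $\M(\Spec K)$ as the pro-object $(\M(C\ssm S))_S$, Poincar\'e duality for the smooth projective model $C$, and the identity $g_v\circ\iota_C=N_{k(v)/k}(1)[2]$ obtained by dualizing the structure map of $\Spec k(v)$ --- are the standard machinery one would expect behind a Weil-reciprocity statement in $DM$, and the reduction to ``two consecutive arrows of a triangle compose to zero'' is exactly where one wants to land. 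The one place I would press you is the pro-categorical bookkeeping: you should verify not merely that the residue maps $\partial_v$ are compatible with the transition maps, but that the family of localization triangles indexed by $S$ actually assembles to a distinguished triangle in the pro-category (pro-categories of triangulated categories are not themselves triangulated in general, so you must either work levelwise throughout or make precise in what sense the glued triangle is distinguished). You should also make sure the smooth projective model $C$ exists and is smooth (not merely regular) over $k$; this is automatic when $k$ is perfect, and is one of the things the resolution-of-singularities hypothesis is quietly covering. Modulo these care points and sign conventions in the rotation of the triangle, your proposal is a correct and natural reconstruction consistent with the paper's pointer to D\'eglise's Gysin triangles.
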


If we take 
the $\Hom_{\DM(k)}(?,\Z(n + 1)[n + 1])$ 
for the sequence (\ref{MRL}), 
we can easily reprove the Weil reciprocity law for Milnor K-groups.

\begin{cor}[\cite{Sus82}]
The composition of
$$
 K^M_{n+1}(K) \onto{\oplus \partial_v} 
 \underset{v:\substack{\text{place} \\ \text{of } K/k}}{\bigoplus}
 K^M_n(k(v)) \onto{\Sigma N_{k(v)/k}}  
 K^M_n(k)
$$
is the zero map.
\end{cor}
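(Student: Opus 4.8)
The plan is to obtain the statement immediately from the Motivic reciprocity law above by applying the contravariant functor $\Hom_{\DM(k)}(-,\Z(n+1)[n+1])$ to the sequence (\ref{MRL}) and reading off what it does on the three terms and the two maps. Because $\Z(n+1)[n+1]$ is an honest object of $\DM(k)$ and not merely a pro-object, $\Hom_{\DM(k)}(-,\Z(n+1)[n+1])$ sends a pro-object $\varprojlim_i M_i$ to the filtered colimit $\varinjlim_i\Hom_{\DM(k)}(M_i,\Z(n+1)[n+1])$; applied to the composite of (\ref{MRL}), which is zero in the pro-category, it therefore produces a composite of homomorphisms of abelian groups which is again zero. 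Everything then reduces to recognizing this composite as the one in the statement.

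First I would identify the three groups. For any field extension $F/k$, Voevodsky's cancellation theorem makes the Tate twist $-\otimes\Z(1)[1]$ fully faithful on $\DM(k)$, so $\Hom_{\DM(k)}(\M(\Spec F)(1)[1],\Z(n+1)[n+1])\simeq\Hom_{\DM(k)}(\M(\Spec F),\Z(n)[n])=\Homo_{\MMM}^n(\Spec F,\Z(n))$, and the last group is canonically $K^M_n(F)$ by the Nesterenko--Suslin theorem (also due to Totaro). Taking $F=k$ gives the target group $K^M_n(k)$, and taking $F=k(v)$ --- together with the fact that $\Hom_{\DM(k)}$ out of the restricted product $\tilde{\prod}_v$ is the direct sum over $v$ --- gives the middle group $\bigoplus_v K^M_n(k(v))$. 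For the source, $\M(\Spec K)$ is the pro-object $\varprojlim_U\M(U)$, where $U$ runs over the dense opens of the smooth projective model curve of $K/k$, so one obtains $\varinjlim_U\Homo_{\MMM}^{n+1}(U,\Z(n+1))=\Homo_{\MMM}^{n+1}(\Spec K,\Z(n+1))\simeq K^M_{n+1}(K)$, using continuity of motivic cohomology and the Nesterenko--Suslin theorem once more.

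Next I would identify the two maps. The map $\tilde{\prod}\partial_v$ of (\ref{MRL}) is assembled from the boundary morphisms of the Gysin (localization) triangles at the closed points $v$ of the model curve, and the map induced on motivic cohomology by such a boundary morphism is precisely the residue (tame symbol) $\partial_v\colon K^M_{n+1}(K)\to K^M_n(k(v))$ --- this is the familiar identification of the motivic Gysin boundary with the Gersten/Rost differential on Milnor $K$-theory. Likewise $\Sigma N_{k(v)/k}(1)[1]$ is built from the finite transfer morphisms associated to the extensions $k(v)/k$, and, since the Nesterenko--Suslin isomorphism is compatible with transfers, the map it induces on the middle group is the sum of the Milnor norms $N_{k(v)/k}$. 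Combining this with the previous paragraph, the vanishing composite produced by $\Hom_{\DM(k)}(-,\Z(n+1)[n+1])$ is exactly $K^M_{n+1}(K)\onto{\oplus\partial_v}\bigoplus_v K^M_n(k(v))\onto{\Sigma N_{k(v)/k}}K^M_n(k)$, which is the assertion.

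The hard part will be this last identification: one has to check that the two motivic constructions entering (\ref{MRL}) --- the Gysin boundary and the finite transfer --- are intertwined, under the Nesterenko--Suslin isomorphism, with the classical tame symbol and the norm on Milnor $K$-theory. This is a functoriality property of that isomorphism, accessible through Rost's cycle-module formalism or D\'eglise's comparison of Gersten-type complexes, and once it is granted the remainder is bookkeeping of Tate twists and shifts. A secondary technical point is to confirm that $\Hom_{\DM(k)}(-,\Z(n+1)[n+1])$ really does convert the projective limits occurring in (\ref{MRL}) into the filtered colimits used above, so that the vanishing of the composite is genuinely preserved.
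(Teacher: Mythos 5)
Your proposal is correct and is precisely the approach the paper intends: the paper itself supplies only the one-line remark preceding the corollary (``apply $\Hom_{\DM(k)}(?,\Z(n+1)[n+1])$ to the sequence (\ref{MRL})''), and your write-up simply fills in the identifications (cancellation theorem, Nesterenko--Suslin--Totaro, Gysin boundary $=$ tame symbol, transfer $=$ norm, pro-limit turning into a colimit) that the paper leaves implicit.
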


The crucial point of proving the motivic reciprocity law is 
the existence of functorial Gysin triangles which is 
proved by D\'eglise \cite{Deg06} 
and $\a1$-homotopy invariance is indispensable 
in his construction of the triangle. 
On the other hand, 
R{\"u}lling proved the Weil reciprocity law 
for the de Rham-Witt complexes 
which is not an $\a1$-homotopy invariant theory \cite{Rul07}. 
We would like to explain this 
reciprocity law also in the context 
of an alien motivic theory. 
In this way, 
we sometimes have interested in 
the problems 
which stretching away several motivic theories and 
sometimes intend to analyze relationship 
of several motivic theories, 
for example, 
their analogies and differences. 
The main theme of {\it{deforming motivic theories}} is 
investigating the relationship 
between various motivic theories. 
In particular, 
Voevodsky's motivic theory 
and an alien (additive) motivic theory.

\subsection{How to describe deforming motivic theories I}

Next we intend to 
illustrate 
how to describe 
deforming motivic theories. 
As in \cite{Han95}, \cite{RO06} and \cite{BV07}, 
the triangulated category of motivic sheaves shall be 
the connected components 
of the $\infty$-category 
of that in some sense. 
Here the word \lq\lq $\infty$-category\rq\rq\ means 
(quasi-) DG-category 
or $\SSS$-category 
in the sense of T\"oen and Vezzosi \cite{TV04}. 
We first start to consider
how to mention 
an alien motivic theory 
as follows.

\begin{ex}
(i) (Toy model)  
Let $V$ be a finite dimensional vector space 
over a field $k$ 
with an inner product 
and $W$ its sub vector space. 
Then we have an isomorphism
\begin{equation}
{\label{topos equality}}
V/W \isoto W^{\bot}
\end{equation}
where $W^{\bot}$ is the orthogonal subspace of $W$ in $V$.

\sn
(ii) 
Let $k$ be a perfect field, 
$V$ the derived category of complexes 
of Nisnevich sheaves transfer over $k$
bounded from above 
and $W$ the localizing subcategory generated 
by the complexes of the form
$$\ztr(X\times \a1) \to \ztr(X)$$
for smooth schemes $X$ over $k$. 
Then we have the equivalence (\ref{topos equality}) 
where $W^{\bot}$ is the full subcategory 
of those complexes 
whose cohomology sheaves are $\a1$-homotopy invariant 
in $V$ (see \cite{Voe00}, Prop.~3.2.3). 
The sign $\bot$ is justified 
in the context of (generalized) topoi theory 
or Bousfiled localization theory as below.

\sn
(iii) (\cf \cite{BGV72}, IV) 
Let $\CCC$ be a small category 
with a Grothendieck topology $\tau$. 
We denote the category 
of presheaves on $\CCC$ 
by $V$ and 
the category 
of $\tau$-local contractible presheaves 
on $\CCC$ by $W$. 
Then we have an equivalence (\ref{topos equality})
where $W^{\bot}$ is the full subcategory 
of $\tau$-sheaves in $V$. 
Namely, an object $F$ in $W^{\bot}$ 
is satisfying the decent condition 
(or rather than saying the orthogonal condition) 
as follows:
$$
  \Hom(\UU,F)\isoto\Hom(h_X,F)
$$ 
where $h_X$ is 
the functor represented 
by an object $X$ in $\CCC$ 
and $\UU$ is a crible in $\tau(X)$. 
As in \cite{Hir03}, \cite{TV05}, 
replacing 
$\CCC$ as above 
with a more higher categorical 
(or rather than say homotopical) 
object in some sense, 
the argument above still works fine 
by replacing the decent condition 
with the hyper one 
(For precise statement, 
consult with \cite{TV05}). 
For DG-categories case, 
see \cite{Dri04} and \cite{Tau07} Appendix.
\end{ex}

Now we would better consider 
the reason why 
a hyper descent condition 
is not seemed to be involved 
in Voevosky's $\a1$-homotopy theory. 
To do so, 
let us recall the following Lemma~\ref{characterization of homotopy inv}: 

\begin{df}
Let $(I, x, y)$ be a triple consisting of 
$I\in \qpsmcor(k)$ 
and different $k$-rational points $x:\Spec k \to I$, 
$y:\Spec k \to I$. 

\sn
(i) Two maps $f$, $g:X\to Y$ in $\qpsmcor(k)$ are said to be 
{\it{$I$-homotopic}} if there is a map 
$H:X\times I \to Y$
such that 
$H \circ x \times \id_X = f$ and $H \circ y \times \id_Y = g$ 
(or $H \circ x \times \id_X = g$ and $H \circ y \times \id_Y = f$).

\sn
(ii) 
A functor $F:\qpsmcor(k) \to \CCC$ is said to 
be {\it{$I$-homotopy invariant}} 
if for any $I$-homotopic maps 
$f$, $g:X \to Y$, $F(f) = F(g)$. 
\end{df}

For $\a1$-homotopy invariant, 
we mean 
that $\a1$-homotopy invariant 
for the triple $(\a1,0,1)$.

\begin{lem}
{\label{characterization of homotopy inv}}
For any presheaf $F$ on $\qpsmcor(k)$, 
the following conditions are equivalent. 

\sn
$\mathrm{(i)}$ 
For any scheme $X$, 
the projection $X\times \a1 \to X$ 
induces an isomorphism 
$$F(X\times\a1)\isoto F(X).$$ 

\sn
$\mathrm{(ii)}$
$F$ is $\a1$-homotopy invariant. 
\end{lem}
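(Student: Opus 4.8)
The plan is to prove the equivalence (ii) $\Rightarrow$ (i) $\Rightarrow$ (ii), where the reverse direction is essentially trivial and the forward direction is the content. For (i) $\Rightarrow$ (ii): suppose $f,g\colon X\to Y$ are $\a1$-homotopic via $H\colon X\times\a1\to Y$, so that $H\circ(\id_X\times 0)=f$ and $H\circ(\id_X\times 1)=g$ (the other case is symmetric). Applying $F$ and using that $F$ is contravariant, we get maps $F(H)\colon F(Y)\to F(X\times\a1)$ and the two inclusions $i_0,i_1\colon X\to X\times\a1$ induce $F(i_0),F(i_1)\colon F(X\times\a1)\to F(X)$. The key point is that $F(i_0)=F(i_1)$: indeed, both are one-sided inverses to the isomorphism $F(p)\colon F(X)\isoto F(X\times\a1)$ induced by the projection $p\colon X\times\a1\to X$, since $p\circ i_0=p\circ i_1=\id_X$. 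Since $F(p)$ is an isomorphism by hypothesis (i), it has a unique two-sided inverse, so $F(i_0)=F(p)^{-1}=F(i_1)$. Then $F(f)=F(i_0)\circ F(H)=F(i_1)\circ F(H)=F(g)$, which is exactly $\a1$-homotopy invariance.

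For the converse (ii) $\Rightarrow$ (i): assume $F$ is $\a1$-homotopy invariant. First I would observe that $p\colon X\times\a1\to X$ and $i_0\colon X\to X\times\a1$ satisfy $p\circ i_0=\id_X$, so $F(i_0)\circ F(p)=\id_{F(X)}$. For the other composite, I need $F(p)\circ F(i_0)=\id_{F(X\times\a1)}$, i.e.\ $F(i_0\circ p)=\id$. Now $i_0\circ p\colon X\times\a1\to X\times\a1$ and $\id_{X\times\a1}$ are both maps $X\times\a1\to X\times\a1$, and the crucial claim is that they are $\a1$-homotopic: the multiplication map $m\colon \a1\times\a1\to\a1$ restricted appropriately gives a homotopy $H=(\mathrm{pr}_X, m)\colon (X\times\a1)\times\a1\to X\times\a1$ with $H\circ(\id\times 0)=i_0\circ p$ (since $m(-,0)=0$) and $H\circ(\id\times 1)=\id_{X\times\a1}$ (since $m(-,1)=\id$). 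Hence by $\a1$-homotopy invariance $F(i_0\circ p)=F(\id_{X\times\a1})=\id$, so $F(p)$ is an isomorphism with inverse $F(i_0)$.

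The main subtlety I expect is not in the core diagram-chase but in checking that the maps used above actually live in $\qpsmcor(k)$, the category of quasi-projective smooth varieties with finite surjective correspondences as morphisms, rather than in the ordinary category of schemes: one must verify that the projection $p$, the section $i_0$, the multiplication homotopy $H$, and the composites all make sense as correspondences and that the identities $p\circ i_0=\id$ etc.\ hold at the level of correspondences (composition of correspondences involving a section and a projection needs the intersection-multiplicity bookkeeping to collapse correctly). This is standard but is the one place where something could go wrong if the homotopy were chosen carelessly; using the multiplication on $\a1$ as above keeps everything in the graph (honest morphism) case, so the correspondence composition reduces to ordinary composition and the verification is routine.
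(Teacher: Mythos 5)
Your proof is correct and uses exactly the idea the paper points to in the discussion immediately following the lemma: the paper does not spell out a proof, but it explicitly remarks that deducing (i) from (ii) relies on ``the existence of the multiplication $\a1 \times \a1 \ni (x,y) \mapsto xy \in \a1$,'' which is precisely the homotopy $H=(\mathrm{pr}_X,m)$ you construct. Your (i)$\Rightarrow$(ii) direction (a one-sided inverse to an isomorphism is the inverse, hence $F(i_0)=F(i_1)$) and your remark that all maps used are graphs of genuine morphisms so composition of correspondences reduces to ordinary composition are both correct and complete the argument the paper leaves to the reader.
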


Notice that 
the condition $\mathrm{(i)}$ (resp. $\mathrm{(ii)}$) above 
is a descent condition for objects (=$0$-morphisms) 
(resp. morphisms (=$1$-morphisms)) 
in some sense. 
Therefore the condition $\mathrm{(i)}$ 
is seemed to be stronger than 
the condition $\mathrm{(ii)}$. 
We designate that
to prove $\mathrm{(i)}$ 
from the condition $\mathrm{(ii)}$, 
we are using the special feature of $\a1$. 
Namely the existence of the multiplication 
$\a1 \times \a1 \ni (x,y) \mapsto xy \in \a1$ 
and this feature is axiomized 
by Voevodsky 
as the site with interval theory \cite{Voe96}, \cite{MV99}. 
In the authors view point, 
this is the reason why 
we are able to shortcut 
to construct the motivic homotopy category 
without using a hyper descent theory 
and there is no reason 
that to establish an alien motivic category, 
we can avoid using 
a higher topoi theory. 
So we propose the following. 

\begin{con}[Very obscure version]
{\label{Very obscure version}}
To build up an alien motivic category, 
we need to choose a moduli space $V$ 
of algebraically geometric objects 
which could be represented 
by $\infty$-category 
or homotopical category 
as a generator class 
and a moduli space $W$ 
of relations space. 
Then we can define 
an alien motivic category 
by the quotient space $V/W$ 
and somewhat hyper descent theory 
implies that 
it is equivalent to $W^{\bot}$. 
Here $W^{\bot}$ is full subspace of $V$ consisting 
of the objects which satisfy 
orthogonal condition in some sense. 
\end{con}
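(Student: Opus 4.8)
The plan is to give the conjecture a precise meaning inside the framework of presentable $\infty$-categories (equivalently, combinatorial model categories, or locally presentable DG-categories in the sense of \cite{Dri04}), where each informal ingredient acquires a standard avatar. First I would take the ``moduli space'' $V$ to be a presentable $\infty$-category $\cC$ equipped with a small set $\cG$ of generators (so that $\cC$ is generated under colimits by $\cG$), and the ``relations space'' $W$ to be a small set $S$ of morphisms of $\cC$: in the applications one takes $S$ to be the set of maps $\ztr(X\times\a1)\to\ztr(X)$, or of hypercovers for a Grothendieck topology, or of the cones of the relevant degeneracy morphisms. The ``quotient'' $V/W$ is then the localization $\cC[S^{-1}]$, which I would realize concretely as the underlying $\infty$-category of the left Bousfield localization $L_S\cC$; this localization exists precisely because $\cC$ is presentable and $S$ is a set, by the Bousfield--Smith cardinality argument (see \cite{Hir03}, and \cite{TV05} for the $\infty$-categorical and stacky reformulations).

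Next I would identify $W^{\bot}$ with the full subcategory of $\cC$ spanned by the $S$-\emph{local} objects, i.e.\ those $F$ for which every $s\colon A\to B$ in $S$ induces an equivalence $\Map_{\cC}(B,F)\isoto\Map_{\cC}(A,F)$; this is exactly the ``orthogonality (descent) condition'', the homotopical refinement of the displayed equation $\Hom(\UU,F)\isoto\Hom(h_X,F)$. The core of the argument is then the standard fact that a left Bousfield localization is a \emph{reflective} localization: the inclusion of the $S$-local objects into $\cC$ admits a left adjoint $L_S$, the canonical functor $\cC\to\cC[S^{-1}]$ inverts exactly the $S$-local equivalences, and restriction along the inclusion induces an equivalence between $\cC[S^{-1}]$ and the subcategory of $S$-local objects, which is the asserted identity $V/W\simeq W^{\bot}$ of (\ref{topos equality}). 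In the topos-theoretic instances this reflection is sheafification (resp.\ hypersheafification), and in Voevodsky's instance it is the $\a1$-localization functor, so the conjecture specializes correctly; here Lemma~\ref{characterization of homotopy inv} supplies precisely the input needed to see that, for presheaves with transfers, $S$-locality as an object-level condition coincides with $\a1$-homotopy invariance as a morphism-level condition --- the subtle point flagged just after that lemma.

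The main obstacle --- and the reason the conjecture is left in such an obscure form --- is twofold. The first half is formal but genuinely delicate: one must guarantee that the class of relations $W$ is \emph{accessible} (ideally, that it can be presented by a set), so that $L_S\cC$ is well-defined at all; for an arbitrary class of relations no such localization need exist, and this is exactly where the presentability or combinatoriality hypotheses on $V$ are indispensable. The second half is the phrase ``somewhat hyper descent theory'': for a Grothendieck topology the naive choice of $S$ built from sieves yields only $1$-truncated descent, and to obtain the correct higher quotient one must instead localize at \emph{hypercovers}, i.e.\ pass to the hypercomplete localization in the sense of \cite{TV05}; reconciling this with Voevodsky's topology, where --- as explained around Lemma~\ref{characterization of homotopy inv} --- the interval structure on $\a1$ lets one bypass hypercompleteness, is a comparison that has to be carried out model by model. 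A uniform precise statement and proof therefore should not be expected; the realistic target is a template theorem of the shape above, together with a checklist of hypotheses ($V$ presentable, $W$ accessible, the ambient topology hypercomplete or equipped with an interval) under which it holds, and then a verification that the motivic, cyclic-homology, and pure-weight perfect-module situations of the present paper all satisfy that checklist.
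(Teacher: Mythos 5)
The statement you were asked to ``prove'' is labeled a \emph{Conjecture} in the paper and is deliberately presented as a ``very obscure version''; the paper offers no proof whatsoever. Immediately after stating it, the authors say the conjecture has two faces --- establishing a general higher-topos-theoretic framework, and accumulating concrete examples of good classes $V$ and $W$ --- and that in the present paper they attack only the second face, deferring the first to future work. So there is no proof in the paper to compare your argument against, and no blind attempt can ``match'' it; the realistic reading of the task is that you were asked to sketch what a precise form of the conjecture might look like.

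Read that way, your proposal is a sensible and well-informed formalization, consistent with all three of the paper's motivating examples (the toy inner-product model, Voevodsky's $\DM$ as $V/W \simeq W^{\bot}$, and sheafification on a site). Encoding $V$ as a presentable $\infty$-category $\cC$ with a set of generators, $W$ as a set $S$ of relation morphisms, $V/W$ as the localization $\cC[S^{-1}] \simeq L_S\cC$, and $W^{\bot}$ as the $S$-local objects is exactly the modern reflective-localization picture, and the equivalence $V/W \simeq W^{\bot}$ then follows from the standard fact that $L_S$ is a left adjoint to the inclusion of $S$-local objects. You also correctly isolate the two places where the conjecture is genuinely underdetermined --- accessibility of the class of relations, and hypercompleteness versus the interval shortcut discussed around Lemma~\ref{characterization of homotopy inv} --- which are precisely the issues the authors themselves say they are not yet ready to treat.

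Two cautions. First, in the paper's Example (ii) the space $W$ is a localizing \emph{subcategory} generated by the maps $\ztr(X\times\a1)\to\ztr(X)$, not a set of morphisms, so the quotient $V/W$ is a priori a Verdier (or Drinfeld DG) quotient by a thick subcategory; your reduction to Bousfield localization at a set is valid here only because the subcategory is generated by a set, and you should make that substitution explicit before invoking the reflective-localization theorem. Second, and more importantly, even a clean theorem of the template type you propose would not \emph{prove} the conjecture: the conjecture asserts, for an as-yet-unspecified class of ``alien motivic categories,'' that such $(V,W)$ exist and that the descent-theoretic equivalence holds for them. That universal claim is a research program, not a statement that can be closed by a single localization argument, which is exactly why the paper leaves it as a conjecture and turns to constructing examples such as $\WtrXonY \simeq \PerfXonY$ instead.
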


Obviously the conjecture has two faces. 
One face is the problem of 
establishing the general frame works 
of a higher or generalized topoi theory 
fitting for our purpose. 
For example, 
presheaves with transfer theory 
and site with interval theory 
can be considered 
as a sheave theory over generalized Grothendieck topology 
and are suitable 
for describing $\a1$-motivic theory. 
The other face is the problem of 
finding the good class of $V$ and $W$ above. 
To attack the first face, 
we need drastically axiomatic consideration. 
To study the second one,
we need look squarely at real many examples. 
The authors are starting from attacking to the second one. 
After getting many important examples, 
they intend to contemplate the first one \cite{HM08}. 

\subsection{Thomason categories and bivariant algebraic $K$-theory}

It is a complicial biWaldhausen category 
closed under the formation 
of the canonical homotopy push-outs 
and pull-backs 
in the sense of \cite{TT90} 
that makes sense of 
its derived category 
and algebraic (resp. additive) $K$-theory 
and we like to call it a {\it{Thomason category}}. 
A morphism between 
Thomason categories 
is complicial exact functor 
in the sense of \opcit\  
In this paper, we examine 
$V$ in Conjecture~\ref{Very obscure version} as
the category of 
Thomason categories 
which is a homotopical category 
in the sense of \cite{DHKS04} 
by declaring the class of weak equivalences 
as {\it{derived Morita equivalences}}, 
that is, morphisms 
which induce equivalences 
of derived categories. 
The reasons why we prefer to 
take algebraically geometric objects 
as Thomason categories 
are the following: \\
\\
$\bullet$ There is a functor 
from the category of schemes 
to that of Thomason categories: 
$X \mapsto \PerfX$, where $\PerfX$ is 
the category of 
perfect complexes of globally finite Tor-amplitude (\cf \cite{TT90}, \S 2.2). 

\sn
$\bullet$ For an appropriate scheme $X$, 
from $\PerfX$ (and its tensor structure), 
we can recover the scheme $X$ completely 
\cite{Bal02}. 
That is, 
$\Perf(X)$ does not lose the geometric information of $X$. 

\sn
$\bullet$ Moreover in the category of Thomason categories, 
we have objects like $\PerfXonY$ and $\Perf^r(X)$ (\cf Def.~\ref{perf df}) 
which are derived from schemes and 
absent from 
the category of schemes. 

\sn
$\bullet$ Since the algebraic $K$-theory is
$\infty$-categorical invariant 
(see \cite{Sch02}, \cite{Toe03}, \cite{TV04} and \cite{BM07}),
we prefer to the category 
of Thomason categories 
than that 
of triangulated categories. \\

Next we need to consider 
how to enrich the category $V$ 
and choose a relation space $W$. 
Inspired from the work 
\cite{Wal96} and 
encouraged by the works 
\cite{Kon07} \S 4 and \cite{Tau07}, 
the authors intend to enriching $V$ 
with the bivariant algebraic $K$-theory. 
To mention the reason why 
we like to select the bivariant $K$-theory 
as morphisms spaces of $V$, 
we will start from 
the following Lemma~{\ref{characterization of p1-homotopy invariance}}. 
Let $\DDD$ be a tensor triangulated category 
and $\M:\qpsmcor(K) \to \DDD$ 
a functor preserving coproducts and tensor products. 
Here the tensor products 
in $\qpsmcor(k)$ are the usual products over $\Spec k$. 
From now on, 
for $\p1$-homotopy invariant, 
we mean that $\p1$-homotopy invariant 
for the triple $(\p1,0,1)$.

\begin{lem}[Compare \cite{CHK97}]
{\label{characterization of p1-homotopy invariance}}
The following
conditions are equivalent. 

\sn
$\mathrm{(a)}$ 
$\M$ is $\p1$-homotopy invariant. 

\sn
$\mathrm{(b)}$ 
The following diagram is commutative.
$$\xymatrix{
\M(\a1 \ssm \{0\}) \ar[rr]^{\M(i)} \ar[rd]_{\M(p)} & & \M(\p1)\\
 & \M(\Spec k) \ar[ru]_{\M(\infty)}
}$$
where $i$ and $p$ are the natural inclusion 
and the structure map respectively. 

\sn
$\mathrm{(c)}$ (Rigidity) $\M(1) = \M(\infty) : \M(\Spec k) \to \M(\p1)$ 
is coincided. 
\end{lem}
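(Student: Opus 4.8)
The plan is to run the cycle $\mathrm{(a)}\Rightarrow\mathrm{(b)}\Rightarrow\mathrm{(c)}\Rightarrow\mathrm{(a)}$, using only one explicit rational function on $\p1$, the automorphisms of $\p1$, and the fact that $\M$ is monoidal for the product over $\Spec k$. For $\mathrm{(a)}\Rightarrow\mathrm{(b)}$, I would exhibit an explicit $\p1$-homotopy between the open immersion $i\colon\a1\ssm\{0\}=\gm\inj\p1$ and the constant morphism $\infty\circ p\colon\gm\to\Spec k\to\p1$. In homogeneous coordinates $[a:b]$ on $\p1$, normalised so that $0=[0:1]$, $1=[1:1]$, $\infty=[1:0]$, the formula $(t,s)\mapsto t/s$, that is $(t,[a:b])\mapsto[tb:a]$, defines an honest morphism $H\colon\gm\times\p1\to\p1$: it is everywhere defined because $t\neq0$ rules out $[tb:a]=[0:0]$. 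One checks $H|_{\gm\times\{1\}}=i$ and $H|_{\gm\times\{0\}}=\infty\circ p$, so $i$ and $\infty\circ p$ are $\p1$-homotopic for the triple $(\p1,0,1)$, and $\mathrm{(a)}$ then forces $\M(i)=\M(\infty)\circ\M(p)$, which is exactly the commutativity asserted in $\mathrm{(b)}$.

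For $\mathrm{(b)}\Rightarrow\mathrm{(c)}$, I would precompose the equality $\M(i)=\M(\infty)\circ\M(p)$ with $\M(e)$, where $e\colon\Spec k\to\gm$ is the unit point of $\gm=\a1\ssm\{0\}$. Since $i\circ e$ is the rational point $1$ of $\p1$ and $p\circ e=\id_{\Spec k}$, functoriality gives $\M(1)=\M(i\circ e)=\M(\infty)\circ\M(p)\circ\M(e)=\M(\infty)$, which is the rigidity statement $\mathrm{(c)}$.

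The substantive step is $\mathrm{(c)}\Rightarrow\mathrm{(a)}$. First I would promote rigidity to the stronger statement $\M(0)=\M(1)=\M(\infty)\colon\M(\Spec k)\to\M(\p1)$: the inversion $\sigma\colon z\mapsto 1/z$ is an automorphism of $\p1$ fixing $1$ and interchanging $0$ and $\infty$, hence a morphism of $\qpsmcor(k)$, and applying $\M(\sigma)$ to $\M(1)=\M(\infty)$ yields $\M(1)=\M(0)$. Now let $f,g\colon X\to Y$ be $\p1$-homotopic via $H\colon X\times\p1\to Y$ with $f=H\circ(\id_X\times 0)$ and $g=H\circ(\id_X\times 1)$. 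Using that $\M$ carries the product $\times$ to $\otimes$ and $\Spec k$ to the unit object, and identifying $\M(X\times\Spec k)\cong\M(X)$, we get $\M(f)=\M(H)\circ(\id_{\M(X)}\otimes\M(0))$ and $\M(g)=\M(H)\circ(\id_{\M(X)}\otimes\M(1))$, so $\M(0)=\M(1)$ gives $\M(f)=\M(g)$; the case with the roles of $0$ and $1$ exchanged is identical. I expect this to be the only delicate point: one must recognise that a $\p1$-homotopy, once pushed through $\M$, becomes a purely tensorial manipulation, which is precisely where the hypotheses that $\M$ is monoidal and that $\otimes$ on $\qpsmcor(k)$ is the product over $\Spec k$ enter; extending rigidity from two points to three via the automorphisms of $\p1$ is then a formality, and everything else is the construction of the single map $(t,s)\mapsto t/s$ together with routine bookkeeping with correspondences and their graphs.
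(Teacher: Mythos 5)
Your proof is correct and follows essentially the same route as the paper: the cycle $(a)\Rightarrow(b)\Rightarrow(c)\Rightarrow(a)$, an explicit $\p1$-homotopy $\gm\times\p1\to\p1$ between $i$ and $\infty\circ p$, precomposition with the unit point of $\gm$ for $(b)\Rightarrow(c)$, and the involution $\tau\colon[x_0:x_1]\mapsto[x_1:x_0]$ combined with monoidality of $\M$ for $(c)\Rightarrow(a)$ (the paper applies $\tau$ inline in the chain of equalities, whereas you extract the intermediate identity $\M(0)=\M(1)$ as a separate step, but these are the same argument). Your formula $(t,[a:b])\mapsto[tb:a]$ is in fact the correct one matching the triple $(\p1,0,1)$; the paper's printed $[tx_0:tx_1]$ is clearly a typo, as it is just the projection.
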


\begin{proof}
(a) $\Rightarrow$ (b): 
The map
$$H : (\a1 \ssm \{0\}) \times \p1\ni  
(t, [x_0 : x_1]) \mapsto [tx_0 : tx_1] \in \p1$$
gives $\p1$-homotopy between $i$ and $\infty \circ p$. 
Therefore we get the results. 

\sn
(b) $\Rightarrow$ (c): 
Considering the following diagram, 
we get the result. 
$$\xymatrix{
\M(\Spec k) \ar[r]^{\M(0)} \ar[d]^{\id} & \M(\a1 \ssm \{0\}) \ar[d]^{\M(i)} \ar[dl]^{\M(p)}\\
\M(\Spec k) \ar[r]_{\M(\infty)} & \M(\p1).
}$$

\sn
(c) $\Rightarrow$ (a): 
Let $f$, $g : X \to Y$ 
be maps in $\qpsmcor(k)$ and 
$H : X \times \p1 \to Y$ are their $\p1$-homotopy,
that is, 
$H \circ 0 \times \id_X = f$ and 
$H \circ 1 \times \id_X = g$. 
Then we have the identity:
\begin{align*}
\M(f) &= \M(H \circ 0 \times \id_X) 
= \M(H \circ \tau  \circ \infty\times \id_X)\\
 &= \M(H) \M(\tau ) \M(\infty) \otimes \M(\id_X)
= \M(H) M(\tau ) \M(1) \otimes \M(\id_X) \\
&= \M(H \circ \tau \circ  1 \times \id_X) = \M(H \circ 1 \times \id_X) = \M(g)
\end{align*}
where $\tau : \p1 \ni [x_0 : x_1] \mapsto [x_1 : x_0] \in \p1$.
\end{proof}

For the importance of the commutative diagram 
in Lemma~\ref{characterization of p1-homotopy invariance} (b), 
the readers shall consult with \cite{CHK97} 
and this topic will be treated in \cite{HM08}. 
It is closely related to the existence 
of the Gersten resolution for $\M$. 
We also notice that the additive $K$-theory, 
additive higher Chow groups 
and the additive group $\ga$ are $\p1$-homotopy invariant 
as functors on the category of algebraic varieties 
(see for example \cite{Qui73}, \cite{TT90}, \cite{Kel99}, \cite{KL07}). 
But $K_0$ is not a functor on $\qpsmcor(k)$. 
As in \S~\ref{Motivic modules}, 
we sometime hope to extend the notion 
of motivic modules
to make functors above belong to the class 
of generalized motivic modules. 
Imitating Walker's argument, 
we prefer to replace $\qpsmcor(k)$ with $K_0^{\naive}(\qpsm(k))$ 
which is the category 
of quasi-projective smooth schemes 
over $k$ enriching with the bivariant $K$-theory 
(For precise definition, see \cite{Wal96}, \cite{Sus03}). 
We like to call $\p1$-homotopy invariant presheaves 
of abelian groups on $K_0^{\naive}(\qpsm(k))$ 
{\it{generalized motivic modules}}. 
Now it is important that 
we recall the following core theorem 
of the $\a1$-motivic theory. 
Let us assume that $k$ is a perfect field.

\begin{thm}
{\label{Voevodsky's theorems}}
For an $\a1$-homotopy invariant presheaf with transfer $F$, 
we have the following. 

\sn
$\mathrm{(i)}$ 
For any $p$, 
$\Homo^p_{\Nis}(?, F_{\Nis})$ can be considered as an 
$\a1$-homotopy invariant Nisnevich sheaf with transfer in the natural way. 
That is, Nisnevich motivic modules are closed under taking cohomology. 

\sn
$\mathrm{(ii)}$ 
$\Homo^p_{\Nis}(?, F_{\Nis})\isoto 
\Homo^p_{\Zar}(?, F_{\Zar})$ for any $p$.
\end{thm}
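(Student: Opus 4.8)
\section*{Proof proposal}

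The plan is to follow Voevodsky's program for homotopy invariant presheaves with transfers over a perfect field; the argument naturally splits into a sheafification step, a comparison-of-topologies step, and a Cousin resolution step, and the two displayed assertions come out of the last step essentially simultaneously.

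First I would establish the basic stability statement underlying everything: if $F$ is a homotopy invariant presheaf with transfers, then its Nisnevich sheafification $F_{\Nis}$ is again a presheaf with transfers and is still homotopy invariant. The crucial input here is Voevodsky's key lemma on smooth relative curves over a local (or Henselian local) base, which allows one to lift a section defined near the generic fibre; this simultaneously produces the transfer structure on $F_{\Nis}$ and the vanishing needed for homotopy invariance to persist. Concretely one shows, in order: that for $X$ smooth semilocal over $k$ with generic point $\eta$ the restriction $F(X)\to F(\eta)$ is injective; that $F$ is \emph{already} a Nisnevich sheaf on Henselian local schemes, i.e. $F_{\Nis}(X)=F(X)$ for such $X$; and hence that transfers descend to $F_{\Nis}$. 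Perfectness of $k$ enters precisely here, to keep the relevant local schemes smooth after base change to residue fields.

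Second I would prove that $F_{\Nis}=F_{\Zar}$ as presheaves with transfers. Given the injectivity on semilocal schemes and the computation of $F$ on Henselian local rings from the previous step, the presheaf $U\mapsto F(\mathcal{O}_{U,x}^{h})$ already satisfies Zariski descent, so the Zariski and Nisnevich sheafifications coincide; in particular $F_{\Zar}$ inherits the transfer structure and homotopy invariance, which is the first half of $\mathrm{(i)}$. Third, for $\mathrm{(ii)}$ and the remaining homotopy-invariance-of-cohomology part of $\mathrm{(i)}$ I would use the Cousin (Gersten) complex: over a perfect field $F_{\Nis}$ admits a resolution whose degree-$p$ term is a direct sum over codimension-$p$ points $x$ of the skyscraper-type sheaves $(i_x)_{*}H^{p}_{x}(-,F)$, each of which is flasque for both topologies. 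Since the complex is literally the same in either topology, $H^{n}_{\Zar}(X,F_{\Zar})\isoto H^{n}_{\Nis}(X,F_{\Nis})$; and since each term is a homotopy invariant presheaf with transfers and the Cousin complex of $X\times\mathbb{A}^1$ is compatible with pullback along the projection, $H^{p}_{\Nis}(X\times\mathbb{A}^1,F_{\Nis})\isoto H^{p}_{\Nis}(X,F_{\Nis})$, so the cohomology presheaves are themselves homotopy invariant presheaves with transfers.

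The main obstacle is the first step: showing that Nisnevich sheafification preserves both the transfer structure and homotopy invariance. This is the technical heart of the theory and is not formal — it requires the full apparatus of standard triples, relative Picard groups of smooth relative curves, and the explicit construction of finite correspondences splitting a curve near its diagonal. Everything afterward — the identification $F_{\Nis}=F_{\Zar}$ and the Cousin-complex cohomology comparison — is comparatively routine once the Gersten resolution is available.
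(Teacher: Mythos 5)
The paper does not actually prove this statement: it is stated as a recollection of a ``core theorem of the $\a1$-motivic theory'' (hence the label \textit{Voevodsky's theorems}), with no argument given, and the only editorial comment the paper makes is that Beilinson and Vologodsky recast it as a consequence of the Gersten resolution of $F$. There is therefore no in-paper proof to compare against. Your sketch is, however, a faithful outline of the standard Voevodsky proof that the paper is implicitly citing, and your third step (the Cousin/Gersten complex, identical term-by-term in the Zariski and Nisnevich topologies with flasque skyscraper terms) is precisely the mechanism the paper's remark refers to. The one point I would flag as underweighted is the passage from ``$F$ is a Nisnevich sheaf on Henselian local schemes'' to ``$F_{\Nis}=F_{\Zar}$ as sheaves with transfers'': in Voevodsky's argument this requires the Zariski separatedness and the comparison between local and Henselian local sections of a homotopy invariant pretheory, which is not purely formal and leans again on the semilocal injectivity and on the structure of standard triples. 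As a blind reconstruction of the intended (and cited) argument, your proposal is essentially correct and follows the same route the paper gestures at.
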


Now Beilinson and Vologodsky perceived that 
Theorem~\ref{Voevodsky's theorems} is a consequence 
of the existence of the Gersten resolution of $F$ \cite{BV07} and 
Walker proved that for $\a1$-homotopy invariant presheaves on 
$K_0(\qpsm(k))$, similar theorem above are verified \cite{Wal96}. 
Therefore the touchstone 
of a notion of generalized motivic modules are following.

\sn
$\bullet$ For a generalized motivic module, 
does it have the Gersten resolution ? 

\sn
$\bullet$ Does the (equidimensional) bivariant $K$-theory 
have the expected properties like as the Friedlander-Voevodsky theory \cite{FV00} ? \\

In this paper, 
the authors prepare to attack to the second problem above. 
More precisely saying, 
in this paper and \cite{Moc08}, 
the authors will observe that 
the roll of a base of our motivic theory, 
analyze how to avoid to the geometry over the base 
(see \S~\ref{How to II}). 
Symbolically, 
let us denote $\star$ the invisible base for our motivic theory. 
If there exist a bigraded bivariant $K$-theory for schemes, 
in particular we can consider $K_{p,q}(X,\star)$ for a scheme $X$. 
The second author believe that 
$K_{p,q}(X,\star)$ might be $K_{p}(\Wt^q(X))$ and 
for a regular noetherian affine scheme $X$, 
the isomorphism 
$$K_{p}(\Wt^q(X))\isoto K_{p}(\MMM^q(X))$$ 
could be considered as 
a variant of Friedlander-Voevodsky duality theorem \cite{FV00}. 

\subsection{How to describe deforming motivic theories II}
\label{How to II}

To compare with two motivic theories, 
the author intend to parametrize the relation space $W$ 
in Conjecture~\ref{Very obscure version}. 
Namely for example we consider moduli space 
of motivic theories $V/W(t)$. 

\begin{ex}
\label{Sus func}
Let $R$ be a commutative discrete valuation ring and $\pi$ its uniformizer. 
We put $K=R[1/\pi]$ and $k=R/\pi R$. 
Then we can consider the {\it{parametrized Susulin functor}} 
by using the following parametrized cosimplicial scheme $\Delta_{\bullet}$. 
We define a parametrized cosimplicial scheme $\Delta_{\bullet}$ by 
$$[n] \mapsto \Spec R[T_0,\ldots,T_n]/(\Sigma T_i -\pi).$$
Obviously $\Delta_{\bullet}|_{\Spec K}$ is usual one and 
$\Delta_{\bullet}|_{\Spec k}$ is appeared in \cite{BE03}. 
\end{ex}

The attempt in Example~\ref{Sus func} is 
just a naive construction of deformation space of 
motivic theories parametrized by $\Spec R$ 
whose fiber over $\Spec K$ is the $\a1$-motivic theory and 
over $\Spec k$ is an alien one. 
But we are confronted with the following serious problems. 

\sn
{\it{What is the motivic theory of total space?}} 

\sn
{\it{Why does the total space theory work fine?}}\\

To solve the second problem above, 
we need to assure that 
we can build up a motivic theory 
without relying upon the geometry over a base. 
Therefore our deforming motivic theories is starting from 
examining the Thomason-Trobaugh weight. 

\section{Preliminary}
\subsection{Tor-dimension}

We briefly review the definition 
and fundamental properties of {\it{Tor-dimension}} of Modules. 

\begin{df} 
\label{Tor-dim def} 
Let $\cL$ be an $\OX$-Module.\ 

\sn
(i) $\cL$ is {\it flat} if the functor $?\otimes_{\OX}\cL:\Mod(X) \to \Mod(X)$ 
defined by $\cM\mapsto \cM\otimes_{\OX}\cL$ is exact.

\sn
(ii) A {\it Tor-dimension} of $\cL$ is the minimal integer $n$ 
such that there is a resolution of $\cL$,
$$
  0\to\cF_n\to\cF_{n-1}\to\cdots\to\cF_0\to\cL\to 0,
$$
where all $\cF_i$ are flat. 
We write as $\Tordim(\cL)=n$.
\end{df}

Now we list some well-known facts on Tor-dimension. 
\begin{lem}[\cite{BGI71}, Exp.~I, 5.8.3, \cite{DG63}, 6.5.7.1] 
\label{fund fact of flat module}
Let $\cL$ be an $\OX$-Module. 

\sn
$\mathrm{(i)}$ 
If $\cL$ is a flat and finitely presented $\OX$-Module, 
then $\cL$ is an algebraic vector bundle. 

\sn 
$\mathrm{(ii)}$
The following conditions are equivalent. 

\noindent
\quad $\mathrm{(a)}$ $\Tordim(\cL)\leqq d$.\\
\quad $\mathrm{(b)}$ For any $\OX$-Module $\cK$ and any $n>d$, we have $\Tor^{\OX}_n(\cL,\cK)=0$.\\
\quad $\mathrm{(c)}$ For any $\OX$-Module $\cK$, we have $\Tor^{\OX}_{d+1}(\cL,\cK)=0$.\\
\quad $\mathrm{(d)}$ If there is an exact sequence
$$0\to\cK_d\to\cF_{d-1}\to\cF_{d-2}\to\cdots\to\cF_0\to\cL\to 0$$
where all $\cF_i$ are flat, then $\cK_d$ is also flat.

\sn
$\mathrm{(iii)}$  For any short exact sequence of $\OX$-Modules
$$
  0\to\cL\to\cL'\to\cL''\to 0,
$$
we have a formula $\Tordim(\cL')\leqq \max\{\Tordim(\cL),\ \Tordim(\cL'')\}$.

\sn
$\mathrm{(iv)}$
For any $x\in X$ and quasi-coherent $\OX$-Modules $\cL$, $\cK$, we have
$$
  \TOR^{\OX}_n(\cL,\cK)_x\isoto\Tor^{\OXx}_n(\cL_x,\cK_x). 
$$
As its consequence, we have the following formula.
$$
  \Tordim(\cL)\leqq\underset{x\in X}{\sup}\Tordim_{\OXx}(\cL_x).
$$
\end{lem}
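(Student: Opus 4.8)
The plan is to derive all four assertions from the homological algebra of the abelian category $\Mod(X)$, which has enough flat objects: every $\OX$-Module is a quotient of a direct sum of sheaves $j_!(\OX|_U)$ attached to open immersions $j\colon U\inj X$, and these are flat because flatness is a stalkwise condition. Hence the sheaf-$\Tor$ functors $\TOR^{\OX}_n$ exist and may be computed from a flat resolution of either argument. I would first isolate two formal tools. The first is the \emph{flatness criterion}: $\cL$ is flat if and only if $\TOR^{\OX}_1(\cL,\cK)=0$ for every $\OX$-Module $\cK$, the nontrivial implication being that, for a short exact sequence $0\to\cK'\to\cK\to\cK''\to 0$, the vanishing of $\TOR^{\OX}_1(\cL,\cK'')$ makes $\cL\otimes_{\OX}-$ left exact. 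The second is \emph{dimension shifting}: from a flat resolution $0\to\cK_d\to\cF_{d-1}\to\cdots\to\cF_0\to\cL\to 0$, splitting it into short exact sequences and iterating the long exact $\Tor$-sequence produces, for every $\cK$, natural isomorphisms $\TOR^{\OX}_1(\cK_d,\cK)\isoto\TOR^{\OX}_{d+1}(\cL,\cK)$.

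Granting these, part (ii) is immediate: (a)$\Rightarrow$(b) because a flat resolution of length $\leqq d$ computes $\TOR^{\OX}_n(\cL,-)$ and visibly vanishes in degrees $>d$; (b)$\Rightarrow$(c) is trivial; (c)$\Rightarrow$(d) is dimension shifting followed by the flatness criterion applied to $\cK_d$; and (d)$\Rightarrow$(a) follows by truncating an arbitrary flat resolution of $\cL$ at stage $d$, the hypothesis making the $d$-th syzygy flat and hence yielding a length-$d$ flat resolution. Part (iii) is then the long exact $\Tor$-sequence of $0\to\cL\to\cL'\to\cL''\to 0$ in the first variable: for $n>\max\{\Tordim(\cL),\Tordim(\cL'')\}$ the outer terms $\TOR^{\OX}_n(\cL,\cK)$ and $\TOR^{\OX}_n(\cL'',\cK)$ vanish by (ii), forcing $\TOR^{\OX}_n(\cL',\cK)=0$ for all $\cK$, and (ii) returns the stated bound.

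For part (iv) I would use that the stalk functor $(-)_x$ is exact, sends flat $\OX$-Modules to flat $\OXx$-modules, and commutes with $\otimes$; applying it to a flat resolution of $\cL$ and using $(\cF\otimes_{\OX}\cK)_x\isoto\cF_x\otimes_{\OXx}\cK_x$ yields $\TOR^{\OX}_n(\cL,\cK)_x\isoto\Tor^{\OXx}_n(\cL_x,\cK_x)$, whence the inequality $\Tordim(\cL)\leqq\sup_x\Tordim_{\OXx}(\cL_x)$ follows, since a sheaf with vanishing stalks is zero, together with (ii) once more. Part (i) is local: over $\OXx$ a finitely presented flat module is free of finite rank by the usual Nakayama argument --- lift a basis modulo the maximal ideal to a surjection from a finite free module and use flatness to annihilate the finitely generated kernel --- and a finitely presented $\OX$-Module free at a point is free on a neighbourhood, so $\cL$ lies in $\cP(X)$. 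None of this is deep (the statement is classical; see \cite{BGI71}, Exp.~I, and \cite{DG63}), but if a single step deserves care it is checking that flatness over the \emph{sheaf} of rings $\OX$, rather than over a ring, is detected by $\TOR^{\OX}_1$-vanishing, together with the Nakayama-plus-finite-presentation input needed for (i); everything else is a diagram chase with long exact sequences.
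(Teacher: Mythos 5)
The paper itself gives no proof of this lemma; it simply cites \cite{BGI71}, Exp.~I, 5.8.3 and \cite{DG63}, 6.5.7.1. Your proposal supplies the standard derivation from the two facts you isolate at the outset (existence of enough flats in $\Mod(X)$ via the sheaves $j_!(\cO_X|_U)$, and the flatness criterion in terms of $\TOR_1$), together with dimension shifting, and it is correct: (ii) is the usual chain of equivalences; (iii) is the long exact $\Tor$-sequence plus (ii); (iv) follows from exactness of the stalk functor, the stalkwise detection of flatness, and compatibility of stalks with $\otimes_{\OX}$; and (i) is the local Nakayama-plus-finite-presentation argument together with the fact that a finitely presented Module free at a stalk is free on a neighbourhood. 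This is in substance the argument one finds in the cited sources, so you have merely made explicit what the paper delegates to the references rather than taken a different route.

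One minor remark worth keeping in mind: your argument for the stalk formula in (iv) does not actually use the quasi-coherence hypothesis on $\cL$ and $\cK$, because the chosen flat resolution $\cF_\bullet\to\cL$ lives in $\Mod(X)$ and the identification $(\cF_i\otimes_{\OX}\cK)_x\isoto(\cF_i)_x\otimes_{\OXx}\cK_x$ holds for arbitrary $\OX$-Modules. So you are in fact proving something slightly stronger than the statement; the restriction to quasi-coherent sheaves in the lemma is harmless and reflects the setting in which the paper applies it, not a genuine hypothesis needed for the isomorphism. The one step that really does deserve the care you flag at the end is that a flat $\OX$-Module has flat stalks; this is standard but not a one-line observation, since a finitely generated ideal of $\OXx$ need not be the stalk of an ideal sheaf on $X$ in any obvious way, and the usual proof routes through Lazard's theorem or an ideal-extension argument.
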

%
%

We define a similar Tor-dimension for unbounded complexes.
\begin{df}[\cite{TT90}, Def.~2.2.11]
\label{Tor-amp def}
Let $E^{\bullet}$ be a complex of $\OX$-Modules.

\sn
(i) $E^{\bullet}$ has ({\it{globally}}) {\it{finite Tor-amplitude}} 
if there are integers $a\leqq b$ and for all $\OX$-Module $\FFF$, 
$\Homo^k(E^{\bullet}\otimes_{\OX}^{L} \cF)=0$ unless $a\leqq k \leqq b$. 
(In the situation, 
we say that $E^{\bullet}$ {\it{has Tor-amplitude contained in $[a,b]$}}).

\sn
(ii) $E^{\bullet}$ has {\it{locally finite Tor-amplitude}} 
if $X$ is covered by opens $U$ such that $E^{\bullet}|_{U}$ 
has finite Tor-amplitude.
\end{df}

\begin{rem} \label{tor-amp rem} 
(i) If the scheme $X$ is quasi-compact, 
then every locally finite Tor-amplitude complex $E^{\bullet}$ of $\OX$-Modules 
is globally finite Tor-amplitude.

\sn
(ii) For 
three vertexes of a distinguished triangle in the derived category of 
$\ModX$, 
if two of these three vertexes are globally finite Tor-amplitude 
then the third vertex is also. 
\end{rem}

\subsection{The coherator}
We briefly review the theory of 
\lq\lq coherator\rq\rq\ from \cite{BGI71}, II 
and \cite{TT90} Appendix B.
There are two abelian categories $\QcohX$ and $\Mod(X)$ 
and the canonical inclusion functor $\phi_X:\QcohX \inj \Mod(X)$ which is exact, 
closed under extensions, reflects exactness, 
preserves and reflects infinite direct sums. 
The problem is that in general $\phi_X$ does not preserve injective objects in $\QcohX$. 
But for coherent schemes, there is a good theory for $\QcohX$. 
We are starting from reviewing the definition of coherence of schemes.

\begin{df}[\cite{DGSV72}, VI]\label{coherent scheme df}
The scheme $X$ is said to be {\it{quasi-separated}} 
if the diagonal map $X \to X\times X$ is quasi-compact or 
equivalently if intersection of any pair of affine open sets in $X$ is quasi-compact.
It is said to be {\it{coherent}} 
if it is quasi-compact and quasi-separated.
\end{df}

\begin{prop}[\cite{BGI71}, II, 3.2; \cite{TT90}, Appendix B]
\label{coherator prop}
Let $X$ be a coherent scheme. 
Then we have the following:

\sn
$\mathrm{(i)}$ $\phi_X$ has the right adjoint functor $Q_X:\Mod(X) \to \QcohX$ 
which is said to be {\rm coherator} 
and the canonical adjunction map $\id \to Q_X\phi_X$ is an isomorphism. 
In particular $\QcohX$ has enough injective and closed under limit.

\sn
$\mathrm{(ii)}$ 
$Q_X$ preserves limit.

\sn
$\mathrm{(iii)}$ 
For any $E^{\bullet} \in \DDD(\QcohX)$ and $F^{\bullet} \in \DDD(\Mod(X))$, 
the canonical adjunction maps $E^{\bullet} \to RQ_X\phi_XE^{\bullet}$ 
and $\phi_XRQ_XF^{\bullet} \to F^{\bullet}$ are quasi-isomorphisms.
\end{prop}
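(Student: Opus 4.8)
\emph{Proof strategy.} The plan is to construct the coherator $Q_X$ locally, glue it over a finite affine cover by a Mayer--Vietoris induction, read off the abelian assertions $\mathrm{(i)}$ and $\mathrm{(ii)}$ from the resulting adjunction, and reduce $\mathrm{(iii)}$ to the statement that $\phi_X$ induces an equivalence of unbounded derived categories $\DDD(\QcohX)\isoto\DDD_{\mathrm{qc}}(\ModX)$, where $\DDD_{\mathrm{qc}}$ is the full subcategory of complexes with quasi-coherent cohomology.

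First, for $X=\Spec A$ I use the equivalence $\QcohX\simeq\Mod(A)$, under which $\phi_X$ becomes $M\mapsto\widetilde M$; its right adjoint is $\cF\mapsto\Gamma(X,\cF)^{\sim}$, the adjunction being the standard $\Hom_{\ModX}(\widetilde M,\cF)\isoto\Hom_A(M,\Gamma(X,\cF))$, and the unit $M\to\Gamma(X,\widetilde M)=M$ is the identity. Then I globalize by induction on the number of affines needed to cover the quasi-compact scheme $X$. For the inductive step I write $X=U\cup V$ with $U$ quasi-compact carrying a coherator $Q_U$ by induction and $V$ affine, and put $W=U\cap V$, which is quasi-compact because $X$ is quasi-separated, so $W$ too carries a coherator. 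The open immersions $j_U,j_V,j_W$ into $X$ are then quasi-compact, hence their direct images preserve quasi-coherence, and I set
\[
Q_X\cF\;=\;j_{U*}Q_U(\cF|_U)\;\times_{\,j_{W*}Q_W(\cF|_W)}\;j_{V*}Q_V(\cF|_V),
\]
the fibre product in $\QcohX$ formed along the adjunction maps. A diagram chase combining the three local adjunctions with the sheaf axiom for $\{U,V\}$ gives $\Hom_{\QcohX}(\cG,Q_X\cF)\isoto\Hom_{\ModX}(\phi_X\cG,\cF)$, so $Q_X$ is right adjoint to $\phi_X$; restricting the unit $\cG\to Q_X\phi_X\cG$ to $U$ and to $V$ and invoking the inductive unit isomorphisms shows it is an isomorphism, which is the main clause of $\mathrm{(i)}$. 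Being a right adjoint, $Q_X$ preserves all limits, which is $\mathrm{(ii)}$; in particular, since $\ModX$ is complete, so is $\QcohX$, with $\lim\cG_i$ computed as $Q_X(\lim\phi_X\cG_i)$. Finally $\QcohX$ has enough injectives: $\ModX$ is Grothendieck, so given $\cG\in\QcohX$ pick a monomorphism $\phi_X\cG\inj I$ with $I$ injective; since $\phi_X$ is exact, $\Hom_{\QcohX}(-,Q_XI)=\Hom_{\ModX}(\phi_X(-),I)$ is exact, so $Q_XI$ is injective in $\QcohX$, and $Q_X$ is left exact, so $\cG=Q_X\phi_X\cG\inj Q_XI$.

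For $\mathrm{(iii)}$, I observe that $\phi_X$ is exact, hence passes to the unbounded derived categories with no deriving and lands in $\DDD_{\mathrm{qc}}(\ModX)$; since $\QcohX$ is Grothendieck it admits K-injective resolutions, so $RQ_X$ is defined on $\DDD(\ModX)$, and $(\phi_X,RQ_X)$ is an adjunction of triangulated functors whose unit and counit are exactly the two maps in $\mathrm{(iii)}$. Thus it suffices to show $\phi_X\colon\DDD(\QcohX)\to\DDD_{\mathrm{qc}}(\ModX)$ is an equivalence. I would do this by proving fully faithfulness --- equivalently that the unit is an isomorphism --- on a set of compact generators: for $X$ coherent both sides are compactly generated by perfect complexes of $\OX$-modules, on which fully faithfulness follows from the bounded case via a hypercohomology spectral sequence reducing to the affine situation (where $\Gamma$ is exact on quasi-coherent sheaves, by Serre's vanishing) patched by Mayer--Vietoris, after which one propagates to all complexes using that both functors preserve coproducts. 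Essential surjectivity onto $\DDD_{\mathrm{qc}}$ is then automatic since the compact generators lie in the image, and this gives the counit isomorphism too.

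The hard part will be precisely this derived equivalence $\DDD(\QcohX)\isoto\DDD_{\mathrm{qc}}(\ModX)$. It is where the coherence hypothesis on $X$ (quasi-compact \emph{and} quasi-separated) is genuinely used, and where one cannot get away with naive bounded-below injective resolutions: one needs Spaltenstein-style K-injective resolutions for the unbounded complexes together with a B\"okstedt--Neeman-type compact-generation argument to bootstrap from generators to everything. By contrast the earlier steps --- the affine case, the Mayer--Vietoris gluing, enough injectives and completeness of $\QcohX$ --- are essentially formal; and of course the proposition may also simply be quoted from \cite{BGI71}, II, 3.2 or \cite{TT90}, Appendix~B.
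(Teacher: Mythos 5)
The paper does not actually prove this proposition --- it simply cites \cite{BGI71}, II, 3.2 and \cite{TT90}, Appendix~B --- so you are reconstructing the argument from scratch, which is a fair exercise. Your treatment of (i) and (ii) (affine case, Mayer--Vietoris gluing of coherators over a finite affine cover using quasi-separatedness, formal consequences of the adjunction, including enough injectives via $Q_X(\text{injective})$) matches the strategy in \cite{TT90} B.7--B.14 and is correct.

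Two points to flag about (iii). First, the proposition as printed in the paper is too strong: the counit $\phi_X RQ_X F^\bullet \to F^\bullet$ cannot be a quasi-isomorphism for an \emph{arbitrary} $F^\bullet \in \DDD(\Mod(X))$, since $\phi_X RQ_X F^\bullet$ always has quasi-coherent cohomology sheaves; the counit is an isomorphism only when $F^\bullet$ already lies in $\DDD_{\mathrm{qc}}(\Mod(X))$. You correctly reformulate (iii) as the equivalence $\phi_X\colon\DDD(\QcohX)\isoto\DDD_{\mathrm{qc}}(\Mod(X))$, which is what the paper actually uses in Lemma~\ref{coherent case} (where the input is a perfect complex, hence has quasi-coherent cohomology). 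Second, the compact-generation argument you sketch, à la B\"okstedt--Neeman, is cleanest when $X$ is \emph{semi-separated} --- i.e.\ intersections of affine opens are affine --- because that is what gives the finite \v{C}ech resolutions and preservation of quasi-coherence under pushforward that your spectral sequence requires. For a merely coherent (quasi-compact quasi-separated) scheme, intersections of affines are only quasi-compact, and one must run a more careful induction (\cite{TT90} B.16 only claims the bounded-below equivalence, and requires either semi-separatedness or noetherianness). The hypothesis mismatch is harmless for the paper's applications, because a divisorial scheme is automatically semi-separated (if $f\in\Gamma(X,\cL_\alpha^{\otimes n})$ and $g\in\Gamma(X,\cL_\beta^{\otimes m})$ with $X_f,X_g$ affine then $X_f\cap X_g = X_{fg}=(X_f)_{g|_{X_f}}$ is affine over $X_f$, hence affine), but you should state the extra hypothesis or cite the qcqs-unbounded version of the theorem explicitly rather than leaning on ``quasi-compact and quasi-separated'' alone.
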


\subsection{Perfect and pseudo-coherent complexes}
We review the notion of pseudo-coherent and perfect complexes. 
For a complex of $\OX$-Modules $E^{\bullet}$ on $X$, 
perfection and pseudo-coherence 
depend only on the quasi-isomorphism class of $E^{\bullet}$ 
and are local properties on $X$. 
So first we define the strict version of them and 
next we define them as being local properties.

\begin{df}[\cite{BGI71}, Exp.~I; \cite{TT90}, \S~2.2]
\label{Pseudo-coherent def}
Let $E^{\bullet}$ be a complex of $\OX$-Modules. 

\sn
(i) 
$E^{\bullet}$ is {\it{strictly perfect}} (resp. {\it{strictly pseudo-coherent}}) 
if it is a bounded complex (resp. bounded above complex) of algebraic vector bundles.

\sn
(ii) 
$E^{\bullet}$ is {\it{perfect}} (resp. {\it{$n$-pseudo-coherent}}) 
if it is locally quasi-isomorphic (resp. $n$-quasi-isomorphic) to 
strictly perfect complexes. 
More precisely,  
for any point $x\in X$, 
there is a neighborhood $U$ in $X$, 
a strictly perfect complex $F^{\bullet}$, 
and a quasi-isomorphism (resp. an $n$-quasi-isomorphism) $F^{\bullet} \isoto E^{\bullet}|_{U}$. 
$E^{\bullet}$ is said to be {\it{pseudo-coherent}} 
if it is $n$-pseudo-coherent for all integer $n$.
\end{df}

\begin{lem}[\cite{TT90}, \S~2.2] 
\label{fund fact for psuedo-coh and perf} 
Let $E^{\bullet}$ be a complex of $\OX$-Modules on $X$. 

\sn
$\mathrm{(i)}$ If $E^{\bullet}$ is strictly pseudo-coherent, 
then it is pseudo-coherent.

\sn
$\mathrm{(ii)}$ In general, a pseudo-coherent complex may not be 
locally quasi-isomorphic to a strictly pseudo-coherent complex. 
But if $E^{\bullet}$ is pseudo-coherent complex of quasi-coherent $\OX$-Modules, 
then $E^{\bullet}$ is locally quasi-isomorphic to 
a strictly pseudo-coherent complex. 

\sn
$\mathrm{(iii)}$ If $E^{\bullet}$ is a pseudo-coherent, 
then all cohomology sheaf $\Homo^i(E^{\bullet})$ is quasi-coherent. 
In particular, a pseudo-coherent $\OX$-Module is a quasi-coherent $\OX$-Module.
Moreover if we assume $X$ is quasi-compact and $E^{\bullet}$ is pseudo-coherent, 
then $E^{\bullet}$ is cohomologically bounded above.

\sn
$\mathrm{(iv)}$ 
Moreover if we assume $X$ is noetherian, we have the following equivalent conditions.\\
\quad $\mathrm{(a)}$ $E^{\bullet}$ is pseudo-coherent.\\
\quad $\mathrm{(b)}$ $E^{\bullet}$ is cohomologically bounded above and all the cohomology sheaf $H^k(E^{\bullet})$ are coherent $\OX$-Modules.\\
In particular, a pseudo-coherent $\OX$-Module is coherent.

\sn
$\mathrm{(v)}$ 
The complex $E^{\bullet}$ is perfect if and only if 
$E^{\bullet}$ is pseudo-coherent and has locally finite Tor-amplitude.

\sn
$\mathrm{(vi)}$ 
Pseudo-coherence and perfection have $2$ out of $3$ properties. 
Namely, let $E^{\bullet}$, $F^{\bullet}$ and $G^{\bullet}$ 
be the three vertexes of a distinguished triangle in the derived category of 
$\Mod (X)$ and if two of these three vertexes are pseudo-coherent (resp. perfect) 
then the third vertex is also.

\sn
$\mathrm{(vii)}$ 
For any complexes of $\OOO_X$-Modules $F^{\bullet}$ and $G^{\bullet}$, 
$F^{\bullet}\oplus G^{\bullet}$ is pseudo-coherent (resp. perfect) 
if and only if $F^{\bullet}$ and $G^{\bullet}$ are. 

\sn
$\mathrm{(viii)}$
A strictly bounded complex of perfect $\OX$-Modules $E^{\bullet}$ is perfect.
\end{lem}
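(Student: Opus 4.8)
\emph{Proof plan.} The plan is to induct on the length of $E^{\bullet}$, i.e.\ on the number of degrees in which it is nonzero, combining the brutal (stupid) truncations with the two-out-of-three property for perfection recorded in part~$\mathrm{(vi)}$.

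First I would record the elementary fact that the class of perfect complexes is stable under shifting: if $P^{\bullet}$ is strictly perfect then so is every shift $P^{\bullet}[n]$, since a shift of a bounded complex of algebraic vector bundles is again one; and because both the local condition of Definition~\ref{Pseudo-coherent def} and the existence of a quasi-isomorphism from a strictly perfect complex are preserved under shifting, the same holds for perfect complexes. In particular, for a perfect $\OX$-Module $\cM$ and any integer $n$, the complex consisting of $\cM$ placed in degree $n$ is perfect. This settles the base case, since a strictly bounded complex with at most one nonzero term is either the zero complex (perfect) or such a shift.

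For the inductive step, suppose $E^{\bullet}$ is nonzero and concentrated in degrees $[a,b]$ with $a<b$. Let $\sigma_{\ge b}E^{\bullet}$ denote the brutal truncation retaining only the term in degree $b$ and $\sigma_{\le b-1}E^{\bullet}$ the one retaining the terms in degrees $\le b-1$. There is a short exact sequence of complexes of $\OX$-Modules
$$0 \to \sigma_{\ge b}E^{\bullet} \to E^{\bullet} \to \sigma_{\le b-1}E^{\bullet} \to 0$$
which is split in each degree, hence induces a distinguished triangle in the derived category of $\Mod(X)$. Its first vertex $\sigma_{\ge b}E^{\bullet}$ is $E^b$ placed in degree $b$, a shift of the perfect $\OX$-Module $E^b$, hence perfect by the previous paragraph; its third vertex $\sigma_{\le b-1}E^{\bullet}$ is a strictly bounded complex of perfect $\OX$-Modules of strictly smaller length, hence perfect by the induction hypothesis. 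By the two-out-of-three property of part~$\mathrm{(vi)}$, the middle vertex $E^{\bullet}$ is perfect, which completes the induction.

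I do not anticipate a genuine obstacle here: the only substantive input is part~$\mathrm{(vi)}$, and the single point deserving a moment's care is verifying that the brutal truncation produces a degreewise split short exact sequence of complexes, so that it yields an honest distinguished triangle in $\DDD(\Mod(X))$ to which $\mathrm{(vi)}$ may be applied.
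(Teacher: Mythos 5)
Your proof of part~$\mathrm{(viii)}$ is correct. The paper itself gives no proof of this lemma (it is cited from \cite{TT90}, \S~2.2), so there is nothing in the text to compare against; but your argument is the standard one: induct on the length of $E^{\bullet}$, use the brutal truncation to split off the top degree as a degreewise split short exact sequence (hence an honest distinguished triangle in $\DDD(\Mod(X))$), and conclude via the two-out-of-three property in part~$\mathrm{(vi)}$ together with the observation that a shift of a perfect $\OX$-Module is a perfect complex.
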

%
%
%

\begin{df}
\label{perf df}
(i)  For any $\OOO_X$-Module $\FFF$, 
we denote its {\it{support}} by 
$$
  \Supp \FFF:=\{x\in X;\FFF_x\ne 0\}.
$$

\sn
(ii) (\cite{Tho97}, 3.2)
For a complex of $\OX$-Modules $E^{\bullet}$, 
the {\it{cohomological support}} of $E^{\bullet}$ 
is the subspace $\Supph E^{\bullet} \subset X$ 
those points $x\in X$ at which the stalk complex of $\OXx$-module $E_x^{\bullet}$ 
is not acyclic. 

\sn
(iii) 
For any closed subset $Y$ of $X$, 
we denote by $\PerfXonY$ (resp. $\PerfqcXonY$, $\sPerf(\XonY)$) 
the complicial biWaldhausen category of 
globally finite Tor-amplitude perfect complexes 
(resp. globally finite Tor-amplitude perfect complexes 
of quasi-coherent $\OX$-Modules, 
strictly perfect complexes) 
whose cohomological support on $Y$. 
Here, the cofibrations are the degree-wise split monomorphisms, 
and the weak equivalences are the quasi-isomorphisms. 
Put 
$$
  \Perf^r(X):= \bigcup_{\Codim Y \geqq r} \PerfXonY.
$$
\end{df}
\begin{lem}
\label{supp nt} 
$\mathrm{(i)}$ 
 For any short exact sequence of $\OX$-Modules
$$
  0 \to \FFF \to \GGG \to \HHH \to 0,
$$
we have $\Supp \GGG=\Supp \FFF \cup \Supp \HHH$.

\sn
$\mathrm{(ii)}$ 
For a complex of $\OX$-Modules $E^{\bullet}$, 
we have $\Supph E^{\bullet}= \cup_{n\in\Z} \Supp \Homo^n(E^{\bullet})$.
\end{lem}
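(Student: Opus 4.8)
The plan is to deduce both equalities from the single fact that, for each point $x\in X$, the stalk functor $\FFF\mapsto\FFF_x$ on $\Mod(X)$ is exact and therefore commutes with the formation of cohomology of complexes.

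First I would treat (i). Fixing $x\in X$ and localizing the given short exact sequence at $x$, exactness of localization yields a short exact sequence of $\OXx$-modules $0\to\FFF_x\to\GGG_x\to\HHH_x\to 0$. If $\GGG_x=0$, then the injection $\FFF_x\inj\GGG_x$ forces $\FFF_x=0$ and the surjection $\GGG_x\epi\HHH_x$ forces $\HHH_x=0$; conversely, if $\FFF_x=0=\HHH_x$ then $\GGG_x$ sits between two zero modules and so vanishes. Hence $\GGG_x\ne 0$ exactly when $\FFF_x\ne 0$ or $\HHH_x\ne 0$, and letting $x$ range over $X$ gives $\Supp\GGG=\Supp\FFF\cup\Supp\HHH$.

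Next I would treat (ii). Since the stalk functor at $x$ is exact, it commutes with cohomology, so $\Homo^n(E^{\bullet}_x)\cong\Homo^n(E^{\bullet})_x$ for all $n\in\Z$. Thus the stalk complex $E^{\bullet}_x$ is acyclic if and only if $\Homo^n(E^{\bullet})_x=0$ for every $n$, i.e.\ if and only if $x\notin\Supp\Homo^n(E^{\bullet})$ for every $n$. Passing to complements, $x\in\Supph E^{\bullet}$ precisely when $x\in\Supp\Homo^n(E^{\bullet})$ for some $n$, which is the asserted identity $\Supph E^{\bullet}=\bigcup_{n\in\Z}\Supp\Homo^n(E^{\bullet})$.

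I do not anticipate a genuine obstacle here: the only point requiring minor care is the interchange of stalks with cohomology (equivalently, the exactness of filtered colimits of abelian groups), and one should observe that neither part requires any quasi-coherence or finiteness hypothesis on the modules involved or on $E^{\bullet}$. These two statements will then be used freely in the sequel to track cohomological supports, for instance in combination with Lemma~\ref{fund fact for psuedo-coh and perf}.
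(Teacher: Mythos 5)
Your argument is correct, and it is the standard stalk-by-stalk verification one would give; the paper states Lemma~\ref{supp nt} without proof, treating both parts as elementary facts, so there is no competing approach in the text to compare against. The only small stylistic point is that (i) is really the statement that the stalk functor at $x$ reflects and preserves zero objects on short exact sequences, and (ii) is exactly the exactness of filtered colimits you invoke — both of which you identify correctly.
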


\begin{lem}
{\label{coherent case}}
For a coherent scheme $X$ and its closed set $Y$, 
the canonical inclusion functor $\PerfqcXonY \inj \PerfXonY$ induces 
an equivalence of categories 
between thir derived categories.
\end{lem}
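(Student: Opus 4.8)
The plan is to show that the inclusion $\PerfqcXonY \inj \PerfXonY$ is, up to quasi-isomorphism, essentially surjective and fully faithful on derived categories; since it is already a complicial exact functor preserving weak equivalences, it then induces an equivalence $\DDD(\PerfqcXonY) \isoto \DDD(\PerfXonY)$. The main tool is Verdier's coherator $Q_X$ from Proposition~\ref{coherator prop}, which exists because $X$ is coherent. First I would argue essential surjectivity: given a globally finite Tor-amplitude perfect complex $E^{\bullet}$ on $X$ with $\Supph E^{\bullet} \subset Y$, form $\phi_X R Q_X E^{\bullet}$; by Proposition~\ref{coherator prop}(iii) the adjunction map $\phi_X R Q_X E^{\bullet} \to E^{\bullet}$ is a quasi-isomorphism, so $R Q_X E^{\bullet}$ is a complex of quasi-coherent $\OX$-Modules quasi-isomorphic to $E^{\bullet}$. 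Being quasi-isomorphic to a perfect complex, it is perfect (perfection depends only on the quasi-isomorphism class, Def.~\ref{Pseudo-coherent def} and Lemma~\ref{fund fact for psuedo-coh and perf}(v)), and $X$ quasi-compact forces globally finite Tor-amplitude by Remark~\ref{tor-amp rem}(i); its cohomological support equals that of $E^{\bullet}$, hence lies in $Y$. So $R Q_X E^{\bullet}$ represents a preimage in $\PerfqcXonY$.

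For full faithfulness, the cleanest route is to invoke the general machinery: the inclusion $\QcohX \inj \Mod(X)$ together with the coherator induces, by Proposition~\ref{coherator prop}(iii), an equivalence between $\DDD(\QcohX)$ and the full subcategory $\DDD_{\Qcoh}(\Mod(X))$ of $\DDD(\Mod(X))$ of complexes with quasi-coherent cohomology. Both $\DDD(\PerfqcXonY)$ and $\DDD(\PerfXonY)$ are realized as full triangulated subcategories of $\DDD(\QcohX)$ and $\DDD_{\Qcoh}(\Mod(X))$ respectively — here one uses Lemma~\ref{fund fact for psuedo-coh and perf}(iii), that perfect (indeed pseudo-coherent) complexes have quasi-coherent cohomology, so that $\DDD(\PerfXonY)$ already lands in $\DDD_{\Qcoh}(\Mod(X))$ — cut out by the same two conditions (perfection, cohomological support in $Y$), which are preserved and reflected under the equivalence since they are detected on cohomology and stalks. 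Therefore the induced functor on derived categories is fully faithful, and combined with the essential surjectivity above it is an equivalence.

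The main obstacle is the bookkeeping needed to pass between the four model-categorical / complicial descriptions: $\PerfqcXonY$ is a complicial biWaldhausen category whose derived category $\DDD$ must be identified compatibly with a subcategory of $\DDD(\QcohX)$, and one must check that the coherator $R Q_X$ can be chosen to respect the finite Tor-amplitude and cohomological-support constraints rather than merely the underlying perfect complex. Concretely, the delicate point is that $R Q_X$ is only defined up to quasi-isomorphism and need not obviously preserve ``strictly'' anything, so one works throughout at the level of derived categories and invokes that perfection, Tor-amplitude bounds (via quasi-compactness), and support are all quasi-isomorphism invariants. Once that is granted, everything reduces to Proposition~\ref{coherator prop}(iii) applied on $X$, and no localization or Mayer–Vietoris patching is needed because coherence of $X$ already gives a global coherator. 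This is the argument Thomason–Trobaugh give in \cite{TT90}, Appendix B, and I would cite it for the routine verifications while spelling out the support condition.
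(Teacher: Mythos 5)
Your proof is correct and takes exactly the paper's approach: the paper's own proof is a single sentence invoking the coherator of Proposition~\ref{coherator prop}(iii) as providing the inverse functor $\TTT(\PerfXonY) \to \TTT(\PerfqcXonY)$, which is precisely what you spell out. The bookkeeping you flag --- that $RQ_X$ preserves perfection, the global Tor-amplitude bound (via quasi-compactness), and cohomological support because all three are quasi-isomorphism invariants --- is the content the paper leaves implicit in its citation, and your handling of it is sound.
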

\begin{proof}
The inverse functor of $\TTT(\PerfqcXonY) \to \TTT(\PerfXonY)$ is 
given by the coherator (Prop.~\ref{coherator prop}, (iii)).  
\end{proof}

\subsection{Divisorial schemes}

Since perfect and pseudo-coherent complexes are well-behavior on 
{\it divisorial}\/ schemes, 
we briefly review the definition and fundamental properties of divisorial schemes.

\begin{df}[\cite{BGI71}, II, 2.2.5; \cite{TT90}, Def.~2.1.1]
\label{divisorial df} 
A coherent scheme $X$ is said to be {\it{divisorial}} 
if it has {\it an ample family of line bundles}. 
That is it has a family of line bundles $\{\LLL_{\alpha}\}$ 
which satisfies the following condition 
(see \opcit for another equivalent conditions):\\
For any $f\in\Gamma(X,\LLL_{\alpha}^{\otimes n})$, 
we put the open set 
$$
  X_f:=\{x\in X|f(x)\ne 0\}.
$$
Then $\{X_f\}$ is a basis for the Zariski topology of $X$ 
where $n$ runs over all positive integer, 
$\LLL_{\alpha}$ runs over the family of line bundles and 
$f$ runs over all global sections of all of $\LLL_{\alpha}^{\otimes n}$.
\end{df}

\begin{ex} 
(i) A quasi-projective scheme over affine scheme is divisorial. 
So classical algebraic varieties are divisorial. 
Since every scheme is locally affine, every scheme is locally divisorial.

\sn
(ii) A separated regular noetherian scheme is divisorial.

\sn
(iii) (\cite{TT90}, Exerc.~8.6) Let $k$ be an field and 
$X$ an $\mathbb{A}^n_{k}$ with double origin. 
Then $X$ is regular noetherian but is not divisorial.
\end{ex}

\begin{lem}[\cite{DG61}, II, 5.5.8 and  \cite{BGI71}, II, 2.2.3.1]
 \label{fundamental result about invertible sheaves} 
For a line bundle $\LLL$ on $X$ and a section $f\in \Gamma(X,\LLL)$, the canonical open immersion $X_f \to X$ is affine.

\end{lem}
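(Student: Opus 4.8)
The assertion that $X_f \to X$ is an affine morphism is local on the target, so the plan is to reduce to the situation where the base is an affine scheme on which $\LLL$ is trivial; there the open set $X_f$ becomes a principal (basic) open subset of an affine scheme, hence affine, and we are done.

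Concretely, I would first invoke the standard fact that a morphism $g\colon Z\to X$ is affine provided $X$ admits an open cover by affine schemes $\{V_\alpha\}$ with each $g^{-1}(V_\alpha)$ affine (\cite{DG61}, II). Since $\LLL$ is an invertible sheaf it is locally free of rank one, so the opens on which $\LLL$ is trivial form a basis for the Zariski topology of $X$; in particular $X$ has an open cover $\{V_\alpha\}$ consisting of affine schemes $V_\alpha=\Spec A_\alpha$ together with chosen trivializations $\psi_\alpha\colon\OOO_{V_\alpha}\isoto\LLL|_{V_\alpha}$. Next I would compute $X_f\cap V_\alpha$: putting $a_\alpha:=\psi_\alpha^{-1}(f|_{V_\alpha})\in\Gamma(V_\alpha,\OOO_{V_\alpha})=A_\alpha$, the isomorphism $\psi_\alpha$ induces an isomorphism of fibers $\kappa(x)\isoto\LLL\otimes\kappa(x)$ at every $x\in V_\alpha$, so $f(x)\ne 0$ if and only if the image of $a_\alpha$ in $\kappa(x)$ is nonzero, i.e.\ $a_\alpha\notin\pp_x$. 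Hence $X_f\cap V_\alpha=D(a_\alpha)=\Spec (A_\alpha)_{a_\alpha}$ is affine, and applying the locality criterion to the open immersion $g\colon X_f\to X$ with the cover $\{V_\alpha\}$ yields that $g$ is affine.

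There is essentially no serious obstacle; the single point that must not be skipped is that an invertible sheaf restricted to an arbitrary affine open of $X$ need not be trivial (think of a Dedekind domain with nontrivial class group), which is why the cover $\{V_\alpha\}$ has to be chosen fine enough to trivialize $\LLL$ rather than merely to consist of affine opens --- this is harmless, as such opens form a basis of the topology. Alternatively, one may describe $X_f$ intrinsically as the relative spectrum over $X$ of the degree-zero part of the $f$-localized graded $\OOO_X$-algebra $\bigoplus_{n\ge 0}\LLL^{\otimes n}$, which exhibits it as affine over $X$ by construction; but the chart-wise computation above is the most economical argument and is the one I would write down.
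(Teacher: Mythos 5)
The paper itself gives no proof of this lemma; it is stated with citations to EGA~II,~5.5.8 and SGA~6,~II,~2.2.3.1, so there is no in-paper argument to compare against. Your proof is correct and is essentially the argument one finds in the cited references: cover $X$ by affine opens that trivialize $\LLL$, observe that on each such open $X_f$ restricts to a basic open set $D(a_\alpha)$ of an affine scheme, and then invoke the local-on-the-target criterion for affineness of a morphism. You are also right to flag that the affine opens must be chosen small enough to trivialize $\LLL$ — this is exactly the point a careless reader might miss, and it is harmless since trivializing opens form a basis. No gap.
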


\begin{thm}[Global resolution theorem, \cite{BGI71}, II; \cite{TT90}, Prop.~2.3.1]
\label{global resol thm}
Let $X$ be a divisorial scheme. Then we have the following.

\sn
{\rm (i)} Any pseudo-coherent complex of quasi-coherent $\OX$-Modules 
is globally quasi-isomorphic to a strictly pseudo-coherent complex.

\sn
{\rm (ii)} Any perfect complex is isomorphic to 
a strictly perfect complex in $\DDD(\Mod(X))$.
\end{thm}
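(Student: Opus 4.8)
The plan is to deduce both statements from a single geometric input, which is the one place where the existence of an ample family of line bundles is used essentially: on a divisorial scheme $X$, every quasi-coherent $\OX$-Module $\cF$ of finite type is a quotient of an algebraic vector bundle, indeed admits an epimorphism $\bigoplus_{i=1}^{N}\LLL_{\alpha_{i}}^{\otimes -n_{i}}\epi\cF$ with the $\LLL_{\alpha_{i}}$ from the ample family and $n_{i}>0$ (call this the Key Lemma). To prove it one uses that the basic opens $X_{f}$, with $f\in\Gamma(X,\LLL_{\alpha}^{\otimes n})$ running over the ample family (\cf Def.~\ref{divisorial df}), form a basis of the Zariski topology, together with quasi-compactness of $X$, to choose finitely many $X_{f_{1}},\dots,X_{f_{N}}$ covering $X$ with $\cF|_{X_{f_{i}}}$ generated by finitely many sections; since $X_{f_{i}}\inj X$ is affine (Lemma~\ref{fundamental result about invertible sheaves}), one has $\Gamma(X_{f_{i}},\cF)=\varinjlim_{n}\Gamma(X,\cF\otimes_{\OX}\LLL_{\alpha_{i}}^{\otimes n})$, so after multiplying by a sufficiently high power of $f_{i}$ each chosen local generator extends to a morphism $\LLL_{\alpha_{i}}^{\otimes -n_{i}}\to\cF$, and the direct sum of all these is surjective on each $X_{f_{i}}$, hence on $X$.

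\mn
\emph{Part (i).} Let $E^{\bullet}$ be a pseudo-coherent complex of quasi-coherent $\OX$-Modules; since $X$ is quasi-compact, $E^{\bullet}$ is cohomologically bounded above (Lemma~\ref{fund fact for psuedo-coh and perf}(iii)), say $\Homo^{j}(E^{\bullet})=0$ for $j>b$. I would construct, by descending induction on $n\leqq b+1$, strictly pseudo-coherent complexes $P^{\bullet}_{n}$ concentrated in degrees $[n,b]$ together with morphisms $\varphi_{n}\colon P^{\bullet}_{n}\to E^{\bullet}$ which are isomorphisms on $\Homo^{j}$ for $j>n$ and epimorphisms on $\Homo^{n}$, compatibly, so that $P^{\bullet}_{n-1}$ is obtained from $P^{\bullet}_{n}$ by adjoining one algebraic vector bundle in degree $n-1$; the case $n=b+1$ is vacuous. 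At the inductive step, the failure of $\varphi_{n}$ to be an isomorphism on $\Homo^{n}$ and an epimorphism on $\Homo^{n-1}$ is recorded by an explicit quasi-coherent $\OX$-Module $\cM_{n}$ built from $P^{\bullet}_{n}$, $E^{\bullet}$ and $\varphi_{n}$ (essentially an appropriate cohomology Module of the cone of $\varphi_{n}$); because $E^{\bullet}$ is $(n-1)$-pseudo-coherent it is, locally on the quasi-compact scheme $X$, $(n-1)$-quasi-isomorphic to a bounded complex of algebraic vector bundles, so $\cM_{n}$ is locally a subquotient formed from morphisms of vector bundles, and hence of finite type. By the Key Lemma pick an epimorphism $Q_{n}\epi\cM_{n}$ with $Q_{n}$ an algebraic vector bundle, and use it to define the new term $P^{n-1}_{n-1}:=Q_{n}$, together with the induced differential $Q_{n}\to P^{n}_{n}$ and the extension $\varphi_{n-1}$. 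The complexes $P^{\bullet}_{n}$ stabilise in each fixed degree as $n\to-\infty$, so the limit $P^{\bullet}$ is strictly pseudo-coherent and $\varphi:=\varinjlim_{n}\varphi_{n}\colon P^{\bullet}\to E^{\bullet}$ is a quasi-isomorphism.

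\mn
\emph{Part (ii).} Let $E^{\bullet}$ be a perfect complex. Since $X$ is divisorial, hence coherent, the coherator (Prop.~\ref{coherator prop}(iii)) lets us assume, up to isomorphism in $\DDD(\Mod(X))$, that $E^{\bullet}$ is a complex of quasi-coherent $\OX$-Modules; it is still perfect, hence pseudo-coherent (Lemma~\ref{fund fact for psuedo-coh and perf}(v)), so Part (i) yields a strictly pseudo-coherent $P^{\bullet}$ with $P^{\bullet}\isoto E^{\bullet}$ in $\DDD(\Mod(X))$. As $X$ is quasi-compact and $E^{\bullet}$ is perfect, $E^{\bullet}$ — hence $P^{\bullet}$ — has globally finite Tor-amplitude (Rem.~\ref{tor-amp rem}(i)), say contained in $[a,b]$, and in particular $\Homo^{j}(P^{\bullet})=0$ for $j<a$ (apply the definition to $\cF=\OX$). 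Now replace $P^{\bullet}$ by the brutal truncation $C^{\bullet}$: equal to $P^{\bullet}$ in degrees $>a$, zero in degrees $<a$, and with $C^{a}:=\coker(d\colon P^{a-1}\to P^{a})$ in degree $a$; the natural map $P^{\bullet}\to C^{\bullet}$ is a quasi-isomorphism, as a direct check in the degrees near $a$ shows, using $\Homo^{a-1}(P^{\bullet})=0$. The term $C^{a}$ is finitely presented, being the cokernel of a morphism of algebraic vector bundles; and, from the brutal-truncation short exact sequence $0\to\sigma_{\geqq a+1}C^{\bullet}\to C^{\bullet}\to C^{a}[-a]\to 0$ (where $\sigma_{\geqq a+1}C^{\bullet}$, the subcomplex of $C^{\bullet}$ in degrees $\geqq a+1$, is a bounded complex of vector bundles concentrated in degrees $>a$), applying $?\otimes_{\OX}^{L}\cF$ and chasing the long exact sequence — using that $C^{\bullet}\simeq E^{\bullet}$ has Tor-amplitude in $[a,b]$ — one gets $\Tor_{i}^{\OX}(C^{a},\cF)=0$ for all $i\geqq1$ and all $\OX$-Modules $\cF$ (equivalently, $C^{a}[-a]$ has globally finite Tor-amplitude by Rem.~\ref{tor-amp rem}(ii), with vanishing below degree $a$). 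Thus $C^{a}$ is flat by Lemma~\ref{fund fact of flat module}(ii), hence an algebraic vector bundle by Lemma~\ref{fund fact of flat module}(i); so $C^{\bullet}$ is strictly perfect and $C^{\bullet}\isoto E^{\bullet}$ in $\DDD(\Mod(X))$.

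\mn
The step I expect to be the main obstacle is the inductive construction in Part (i): one must pin down the obstruction Module $\cM_{n}$ precisely and verify that $(n-1)$-pseudo-coherence of $E^{\bullet}$, together with quasi-compactness of $X$, really forces $\cM_{n}$ to be of finite type, so that the Key Lemma can be applied — this is the substance hidden in the inductive definition of $n$-pseudo-coherence in \cite{BGI71}, Exp.~I. The remaining ingredients (the Key Lemma itself, and the brutal truncation in Part (ii)) are routine once the ample-family hypothesis and the flatness criteria of Lemma~\ref{fund fact of flat module} are in hand.
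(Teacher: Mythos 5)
Your proof is correct and reconstructs the standard argument from the cited sources ([BGI71], Exp.~II and [TT90], Prop.~2.3.1); the paper itself offers no independent proof, only the citation, so there is no divergence to report. Two minor remarks: the truncation you use in Part~(ii) --- replacing degree $a$ by $\coker(d^{a-1})$ and killing lower degrees --- is the \emph{canonical} (good) truncation $\tau^{\geqq a}$, not the brutal one, but the construction and the verification that $P^{\bullet}\to C^{\bullet}$ is a quasi-isomorphism (using $\Homo^{j}(P^{\bullet})=0$ for $j<a$) are exactly right; and the inductive step in Part~(i) that you flag as the main obstacle is precisely the content of \cite{TT90}, Lemmas~1.9.4--1.9.5 (which this paper quotes later as Lemma~\ref{enough objects to resolve lemma} and Lemma~\ref{finiteness of cohomology}): the finite-type-ness of the obstruction module comes from $(n-1)$-pseudo-coherence via Lemma~\ref{finiteness of cohomology}, after which your Key Lemma supplies the covering vector bundle. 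So the structure you propose is faithful to the standard proof.
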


\subsection{Regular closed immersion}

There are several definitions of 
regular immersion (see \cite{DG67} and \cite{BGI71}, VII). 
Both definitions are equivalent if a total scheme is noetherian. 
We adopt the definition in \cite{BGI71} and 
for readers convenience, we briefly review 
the notation and fundamental properties of regular closed immersion. 

\begin{df}
Let $u:\LLL\to \OX$ be a morphism of $\OX$-Modules 
from an algebraic vector bundle $\cL$  to $\OX$. 
A {\it{Koszul complex}} associated to $u$ is 
the strictly perfect complex $\kos^{\bullet}(u)$ defined as follows: 
For $n>0$, we put 
\begin{align*}
  \kos^{-n}(u)(=\kos_n(u)) &:= \bigwedge^n \LLL, \quad \text{and} \\
  d_n(x_1\wedge\cdots\wedge x_n)& :=\sum^n_{r=1}{(-1)}^{r-1}u(x_r)x_1\wedge\cdots\wedge\widehat{x}_r\wedge\cdots\wedge x_n.
\end{align*}
\end{df}

\begin{df}[\cite{BGI71}, VII, 1.4]
\label{regular immersion def} 
(i) An $\OX$-Module homomorphism $u:\LLL \to \OOO_X$ 
from an algebraic vector bundle $\cL$ 
to $\OX$ is said to be {\it{regular}} 
if $\kos^{\bullet}(u)$ is a resolution of $\OX/\im u$.

\sn
(ii) An ideal sheaf $\III$ on $X$ is {\it{regular}} 
if locally on $X$,  
there is a regular map $u:\LLL \to \OOO_X$ such that $\im u=\III$. 
More precisely, this means that 
if there is an open covering $\{U_i\}_{i\in I}$ of $X$ and for each $i\in I$, 
there is a regular map $u_i:\LLL|_{U_i}\to \OOO_{U_i}$ 
such that $\im u_i=\III|_{U_i}$.

\sn
(iii) A closed immersion $Y\hookrightarrow X$ is said to be {\it{regular}} 
if the defining ideal of $Y$ is regular.
We put $\NNN_{X/Y}:=\III/\III^2$ and call it 
the {\it{conormal sheaf}} of the regular closed immersion.
\end{df}

\begin{lem}[\cite{DG67}]
\label{fund res reg clo} 
Let  $Y\hookrightarrow X$ be a regular closed immersion 
whose defining ideal is $\III$.

\sn
$\mathrm{(i)}$ The ideal sheaf $\III$ satisfies the following conditions:

\noindent
\quad $\mathrm{(a)}$ $\III$ is of finite type.\\
\quad $\mathrm{(b)}$ For each $n$, 
$\III^n/\III^{n+1}$ is a locally free $\OOO_X/\III$-Module of finite type.\\
\quad $\mathrm{(c)}$ A canonical map
$$
  \Sym_{\OOO_X/\III}(\NNN_{X/Y}) \to \Gr_{\III}(\OOO_X)
$$
is an isomorphism of $\OOO_X/\III$-Algebra. 
Here $\Sym_{\OOO_X/\III}(\NNN_{X/Y})$ is 
the symmetric algebra associated to $\NNN_{X/Y}$, 
$\Gr_{\III}(\OOO_X)=\bigoplus_{n\geqq 0} \III^n/\III^{n+1}$ is the graded algebra 
associated to an $\III$-adic filtration in $\OOO_X$ and 
the canonical map is defined by the universal property of symmetric algebra. 


\sn
$\mathrm{(ii)}$ If the scheme $X$ is noetherian, 
then $\III$ is regular in the sense of \cite{DG67}. 
That is, for any point $x\in X$ 
there is an open neighborhood $U$ of $x$, 
and a regular sequence $f_1,\ldots,f_r\in\Gamma(U,\III)$ which generates $\III|_{U}$.   
\end{lem}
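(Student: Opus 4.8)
The plan is to reduce all the assertions to a purely local computation with Koszul complexes. Since being a regular closed immersion is a local condition (Definition~\ref{regular immersion def}) and since (i) is local on $X$, I may assume $X=\Spec A$ is affine, $\LLL$ is free of rank $r$, and $u$ corresponds to a sequence $f_1,\dots,f_r\in A$ with $\III$ the ideal sheaf attached to $I=(f_1,\dots,f_r)$, such that the Koszul complex $\kos^\bullet(u)=\kos^\bullet(f_1,\dots,f_r)$ is a resolution of $A/I$. Assertion (a) is then immediate, $\III$ being generated by the $r$ sections $f_i$.

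The core is (c), from which (b) will follow. Because $\kos^\bullet(f_1,\dots,f_r)$ is a bounded resolution of $A/I$ by free modules of finite rank, the modules $\Tor^A_i(A/I,A/I)$ are the homology of $\kos^\bullet(f_1,\dots,f_r)\otimes_A A/I$, whose differentials vanish; hence $\Tor^A_i(A/I,A/I)\cong\bigwedge^i\big((A/I)^r\big)$, and in particular $\NNN_{X/Y}=I/I^2\cong\Tor^A_1(A/I,A/I)$ is locally free of rank $r$ with basis the classes of the $f_i$. The canonical surjection of graded $\OOO_X/\III$-algebras $\Sym_{\OOO_X/\III}(\NNN_{X/Y})\to\Gr_{\III}(\OOO_X)$ is therefore an isomorphism in degree $1$; to promote this to all degrees I would invoke (or reprove, by induction on $r$, exploiting that a Koszul-regular sequence restricts to a Koszul-regular sequence on which $f_r$ acts as a non-zero-divisor on the quotient) the fact that a Koszul-regular ideal is quasi-regular, i.e. that $\Gr_{\III}(\OOO_X)$ is locally isomorphic to a polynomial algebra $(\OOO_X/\III)[T_1,\dots,T_r]$. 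This proves (c) after patching, and (b) is then immediate, since $\III^n/\III^{n+1}\cong\Sym^n_{\OOO_X/\III}(\NNN_{X/Y})$ is the $n$-th symmetric power of a locally free $\OOO_X/\III$-module of finite rank.

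For (ii), I would pass to the stalks, which are now noetherian local rings. Keeping the affine picture above and localizing at a point $x$, the sequence $f_1,\dots,f_r$ generates $\III_x$ and $\kos^\bullet(f_1,\dots,f_r)$ remains acyclic in positive degrees over $\OOO_{X,x}$; when $\III_x\neq\OOO_{X,x}$ the $f_i$ then lie in $\mm_x$, and over the noetherian local ring $\OOO_{X,x}$ the acyclicity of $\kos^\bullet(f_1,\dots,f_r)$ forces $f_1,\dots,f_r$ to be a regular sequence, because it makes the grade of $\III_x$ equal to $r$ (via the grade-sensitivity of Koszul homology). Finally I would spread this out: on a noetherian scheme the locus where a finite sequence of sections is a regular sequence is open, since each condition ``$f_i$ is a non-zero-divisor on $\OOO/(f_1,\dots,f_{i-1})$'' amounts to avoiding the locally finitely many associated primes; so after shrinking a neighborhood $U$ of $x$ the $f_i$ form a regular sequence still generating $\III|_U$, which is precisely the regularity of $\III$ in the sense of \cite{DG67}. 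I expect the genuine obstacle to be the non-noetherian input to (c) — that Koszul-regularity implies quasi-regularity with no finiteness on $A$, in particular that the sub-sequence stays Koszul-regular — which is the delicate homological point isolated in \cite{BGI71}, Exp.~VII and which, if preferred, one may simply quote; everything else is routine localization, a standard $\Tor$ computation, and a spreading-out argument.
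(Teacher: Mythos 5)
The paper offers no proof of this lemma; it is stated purely as a citation of \cite{DG67} (with the relevant Koszul-regular versus quasi-regular theory in \cite{BGI71}, Exp.~VII), so your proposal has to be judged as a reconstruction. Most of it is sound and matches the standard line: the local Koszul presentation (from which (a) is immediate, and which gives $\Tor^A_1(A/I,A/I)\cong I/I^2\cong (A/I)^r$, hence the local freeness of $\NNN_{X/Y}$), the reduction of (b) to (c), and for (ii) the passage to noetherian local rings where acyclicity of the Koszul complex on elements of the maximal ideal forces genuine regularity, followed by spreading out via openness of the non-zero-divisor conditions. (For (ii) you implicitly take $x\in Y$; for $x\notin Y$ the ideal is locally the unit ideal and the assertion is vacuous, worth a one-line remark.)

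You have correctly flagged the real difficulty --- that a Koszul-regular sequence is quasi-regular over an arbitrary ring, which is what gives (c) in all degrees --- but your fallback argument is not sound. You suggest proving it ``by induction on $r$, exploiting that a Koszul-regular sequence restricts to a Koszul-regular sequence on which $f_r$ acts as a non-zero-divisor on the quotient.'' If this truncation statement held, the same induction would show that every Koszul-regular sequence is a \emph{regular} sequence, which is false in general: that failure is exactly what separates the two notions over non-noetherian rings. Koszul-regularity, unlike ordinary regularity, is not stable under dropping the last element, and the published proofs of ``Koszul-regular implies quasi-regular'' (\cite{BGI71}, Exp.~VII; the Stacks Project chapter on Koszul regular sequences) take a different route. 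Quoting the result, as you also propose, is therefore not merely convenient but necessary.
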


\section{Weight on pseudo-coherent Modules}


\begin{df}
\label{weight def}
A pseudo-coherent $\OX$-Module  $\cF$  is {\it of weight} $r$ 
if it is of Tor-dimension $\leqq r$ and 
there is a regular closed immersion $Y \hookrightarrow X$ of codimension $r$ in $X$ 
such that the support of $\cF$ is in $Y$.
\end{df}
We denote by $\Wt^r(X)$ 
the category of pseudo-coherent $\OX$-Modules of weight $r$. 
For a regular closed immersion $Y \hookrightarrow X$ of codimension $r$,  
we denote by $\WtrXonY$ the category of pseudo coherent $\OX$-Modules 
of weight $r$ supported on the subspace $Y$.
Immediately, a pseudo-coherent $\OX$-Module of weight $0$ 
is just an algebraic vector bundle. 

\begin{lem} 
\label{weight lemma} 
The category $\WtrXonY$ is closed under extensions and 
direct summand in the abelian category $\ModX$. 
In particular, 
$\WtrXonY$ is an idempotent complete exact category.
\end{lem}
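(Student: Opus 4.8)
The plan is to deduce both closure properties directly from the structural results assembled in \S3, and then to invoke the standard fact that an extension-closed, summand-closed full additive subcategory of an abelian category is an idempotent complete exact category.

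First I would verify closure under extensions. Let $0 \to \cF' \to \cF \to \cF'' \to 0$ be a short exact sequence in $\ModX$ with $\cF', \cF'' \in \WtrXonY$. Regarding it as a distinguished triangle $\cF' \to \cF \to \cF'' \to \cF'[1]$ in $\DDD(\ModX)$, the $2$-out-of-$3$ property for pseudo-coherence (Lemma~\ref{fund fact for psuedo-coh and perf}~(vi)) shows that $\cF$ is pseudo-coherent; Lemma~\ref{fund fact of flat module}~(iii) gives $\Tordim(\cF) \leqq \max\{\Tordim(\cF'),\Tordim(\cF'')\} \leqq r$; and Lemma~\ref{supp nt}~(i) gives $\Supp \cF = \Supp \cF' \cup \Supp \cF'' \subseteq Y$. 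Hence $\cF \in \WtrXonY$. Applying the same three ingredients to the split sequence $0 \to \cF_1 \to \cF_1 \oplus \cF_2 \to \cF_2 \to 0$ shows that $\WtrXonY$ is closed under finite direct sums; together with the evident fact that the zero module lies in $\WtrXonY$, this makes it a full additive subcategory of $\ModX$.

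Next I would verify closure under direct summands. Suppose $\cF \cong \cF_1 \oplus \cF_2$ in $\ModX$ with $\cF \in \WtrXonY$. By Lemma~\ref{fund fact for psuedo-coh and perf}~(vii), each $\cF_i$ is pseudo-coherent. For the Tor-dimension, I would use that for every $\OX$-module $\cK$ and every $n$ the group $\Tor^{\OX}_n(\cF,\cK)$ decomposes as $\Tor^{\OX}_n(\cF_1,\cK) \oplus \Tor^{\OX}_n(\cF_2,\cK)$; since the left-hand side vanishes for $n > r$ by Lemma~\ref{fund fact of flat module}~(ii), so does each summand, whence $\Tordim(\cF_i) \leqq r$ by the same lemma. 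Finally $\Supp \cF_i \subseteq \Supp \cF \subseteq Y$, so $\cF_i \in \WtrXonY$.

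With both properties in hand, $\WtrXonY$ is a full additive subcategory of the abelian category $\ModX$ closed under extensions, so by Quillen's criterion it inherits a canonical exact structure whose conflations are the short exact sequences of $\ModX$ all of whose terms lie in $\WtrXonY$; and being closed under direct summands inside $\ModX$, which is idempotent complete since it is abelian, $\WtrXonY$ is itself idempotent complete. I do not anticipate a genuine obstacle here: the only points calling for a little care are translating the short exact sequence into a distinguished triangle before invoking the $2$-out-of-$3$ property for pseudo-coherence, and recording that $\Tor$ respects the finite direct sum decomposition, so that a direct summand of a module of Tor-dimension $\leqq r$ again has Tor-dimension $\leqq r$.
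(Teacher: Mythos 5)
Your proof is correct and follows essentially the same route as the paper, which simply cites Lemma~\ref{fund fact of flat module}~(iii), Lemma~\ref{supp nt}~(i), and Lemma~\ref{fund fact for psuedo-coh and perf}~(vii) without elaboration. Your version is actually a bit more careful in two places that the paper glosses over: you invoke Lemma~\ref{fund fact for psuedo-coh and perf}~(vi) (the $2$-out-of-$3$ property) to get pseudo-coherence of the middle term of an extension, and you use the Tor-vanishing characterization of Lemma~\ref{fund fact of flat module}~(ii), rather than part~(iii), to bound the Tor-dimension of a direct summand --- which is the correct move, since (iii) only bounds $\Tordim$ of the middle term from above by the outer terms and does not directly control a summand. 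These are exactly the details the paper's one-line citation leaves to the reader.
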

\begin{proof}
The assertion follows from Lemma \ref{fund fact of flat module}, (iii), 
Lemma \ref{supp nt}, (i), 
and Lemma \ref{fund fact for psuedo-coh and perf}, (vii).
\end{proof}

A pseudo-coherent $\OX$-Module $\cF$ of weight $r$ 
has globally finite Tor-amplitude. 
Thus it is perfect by Lemma~\ref{fund fact for psuedo-coh and perf}, (v) 
and we have an inclusion functor 
$\WtrXonY \hookrightarrow \Perf(\XonY)$.
%
Moreover we have the natural inclusion functor $\Ch^b(\WtrXonY) \inj \Perf(\XonY)$ by Lemma~\ref{fund fact for psuedo-coh and perf}, (viii).
Now, we state our main theorem.

\begin{thm} \label{main theorem}
Let $X$ be a divisorial scheme and 
$Y \inj X$ a regular closed immersion of codimension $r$. 
Then the inclusion $\Ch^b(\WtrXonY) \inj \PerfXonY$ 
induces a derived Morita equivalence.
\end{thm}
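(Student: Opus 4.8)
Write $\Phi\colon\TTT(\Ch^b(\WtrXonY))\to\TTT(\PerfXonY)$ for the triangulated functor induced by the inclusion $\Ch^b(\WtrXonY)\inj\PerfXonY$ constructed above. We must prove that $\Phi$ is fully faithful and essentially surjective. Two standing reductions are available. First, by Lemma~\ref{coherent case} the inclusion $\PerfqcXonY\inj\PerfXonY$ is already a derived equivalence, so we may and do work inside $\DDD(\QcohX)$ throughout; there the coherator (Proposition~\ref{coherator prop}) makes $\DDD(\QcohX)\to\DDD(\Mod(X))$ fully faithful, so $\TTT(\PerfXonY)$ is a full triangulated subcategory and $\Hom_{\TTT(\PerfXonY)}(M,N[i])=\Ext^i_{\OOO_X}(M,N)$, and Grothendieck's local cohomology $R\Gamma_Y$ is at our disposal. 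Second, by Illusie's global resolution theorem (Theorem~\ref{global resol thm}(ii)) every object of $\TTT(\PerfXonY)$ is isomorphic in $\DDD(\Mod(X))$ to a strictly perfect complex $E^{\bullet}$ with $\Supph E^{\bullet}\subseteq Y$; so for essential surjectivity it suffices to lift every such $E^{\bullet}$ through $\Phi$.

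\emph{Essential surjectivity.} The geometric input is that, $Y\inj X$ being regular of codimension $r$, each infinitesimal neighbourhood $\OOO_X/\III^n$ lies in $\WtrXonY$: it is supported on $Y$, pseudo-coherent, and the $\III$-adic filtration has graded pieces $\III^j/\III^{j+1}\cong\Sym^j_{\OOO_Y}(\NNN_{X/Y})$ (Lemma~\ref{fund res reg clo}(i)), which are vector bundles over $\OOO_Y=\OOO_X/\III$ and hence of Tor-dimension $\leqq r$ over $\OOO_X$ (locally Koszul-resolved), so iterated use of Lemma~\ref{fund fact of flat module}(iii) gives $\Tordim(\OOO_X/\III^n)\leqq r$. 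On a divisorial scheme the thick subcategory $\TTT(\PerfXonY)\subseteq\TTT(\Perf(X))$ is generated, under shifts, cones and direct summands, by the twists $(\OOO_X/\III^n)\otimes\LLL_{\alpha}^{\otimes m}$ of these modules, the ample family furnishing the global Koszul complexes on $Y$ used in \cite{TT90}, \S 2--3. Since all these generators lie in $\WtrXonY\subseteq\Ch^b(\WtrXonY)$, and since the source $\TTT(\Ch^b(\WtrXonY))$ is idempotent complete — being the bounded derived category of the idempotent complete exact category $\WtrXonY$ (Lemma~\ref{weight lemma}) — the essential image of $\Phi$ is a thick subcategory of $\TTT(\PerfXonY)$ containing all the generators, hence all of it. (Alternatively one resolves $E^{\bullet}$ directly: $E^{\bullet}\simeq R\Gamma_Y E^{\bullet}$ since $E^{\bullet}$ is supported on $Y$, and feeding the Koszul model of $R\Gamma_Y$ along the regular sequences cutting out $Y$ (Lemma~\ref{local cohomology lem}) into a finite sequence of cone constructions against weight $r$ modules, cleaned up by Theorem~\ref{global resol thm}(i) and the Tor-amplitude bookkeeping of Lemma~\ref{fund fact of flat module}(ii), produces a bounded complex of weight $r$ modules quasi-isomorphic to $E^{\bullet}$.)

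\emph{Full faithfulness.} Here I would proceed by dévissage. The brutal truncations of a bounded complex exhibit every object of $\TTT(\Ch^b(\WtrXonY))$ as a finite iterated extension of shifts of objects of $\WtrXonY$, so a double application of the five lemma to the long exact $\Hom$-sequences reduces the claim to showing that for all $M,N\in\WtrXonY$ the comparison map
$$\Hom_{\TTT(\Ch^b(\WtrXonY))}(M,N[i])\longrightarrow\Ext^i_{\OOO_X}(M,N)$$
is an isomorphism for every $i\in\Z$. For this I would exploit that $\WtrXonY$ is closed under kernels of admissible epimorphisms in $\QcohX$ — if $0\to K\to W_0\to M\to 0$ with $W_0,M$ of weight $r$, then the Tor long exact sequence together with Lemma~\ref{fund fact of flat module} forces $\Tor^{\OOO_X}_{r+1}(K,-)=0$, so $K$ is again of weight $r$ — and that every weight $r$ module, being killed by a power of $\III$, is a quotient of a finite sum of the distinguished weight $r$ modules $(\OOO_X/\III^n)\otimes\LLL_{\alpha}^{\otimes m}$. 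These two facts give functorial left $\WtrXonY$-resolutions of $M$ that compute the left-hand side, while the right-hand side is computed from the same resolution via Grothendieck's local cohomology (Lemma~\ref{local cohomology lem}), which controls $\Ext^{\ast}_{\OOO_X}((\OOO_X/\III^n)\otimes\LLL_{\alpha}^{\otimes m},N)$ for $N$ supported on $Y$; matching the two computations yields the isomorphism.

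\emph{The main obstacle} is precisely this last $\Ext$-comparison. On the level of abelian categories $\WtrXonY$ is strictly smaller than the category of coherent $\OOO_X$-modules supported on $Y$ — the cohomology sheaves of a perfect complex supported on $Y$ need not have Tor-dimension $\leqq r$ — so $\TTT(\PerfXonY)$ is \emph{not} the full subcategory of $\DDD^b(\QcohX)$ with cohomology in $\WtrXonY$, and the classical resolution theorem for derived categories does not apply verbatim. What rescues the argument is the \emph{regularity} of the immersion: the Koszul resolution of $\OOO_Y$ of length exactly $r$ forces $R\Gamma_Y$ to raise cohomological degree by at most $r$, and it is this concentration that makes the passage to weight $r$ modules alter neither $\TTT(\PerfXonY)$ nor the relevant $\Ext$-groups. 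Equivalently, $\OOO_Y$ together with its twists is a compact generator of $\TTT(\PerfXonY)$ whose derived endomorphism ring is insensitive to whether it is computed in $\WtrXonY$ or in $\Perf(X)$; pinning down this identification is the heart of the proof.
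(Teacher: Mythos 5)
Your proposal correctly identifies the geometric inputs (the coherator, local cohomology, the modules $\OOO_X/\III^n$ and their twists by the ample family, the regularity of the immersion giving Koszul resolutions of length $r$), but the logical structure has two genuine gaps, both of which the paper sidesteps by routing the argument through the Thomason--Trobaugh approximation machinery (Lemma~\ref{Deri eq cri}, which is \cite{TT90}~1.9.7, together with the resolution criterion Lemma~\ref{enough objects to resolve lemma}, i.e.\ \cite{TT90}~1.9.5). The paper does not try to separate essential surjectivity from fully faithfulness at all: it factors the inclusion as $\Ch^b(\WtrXonY)\onto{\alpha}\BBB\onto{\beta}\CCC\onto{\gamma}\PerfXonY$ (with $\BBB$ the perfect complexes in $\Ch^{-}(\WtrXonY)$ and $\CCC$ the perfect complexes of quasi-coherent Modules supported on $Y$), and shows that each fully faithful complicial inclusion satisfies $(\DE)$ or $(\DE)^{\op}$; TT~1.9.7 then delivers the derived equivalence for each step in one stroke.

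The first gap is in your essential surjectivity argument. You conclude by saying the essential image of $\Phi$ is a thick subcategory of $\TTT(\PerfXonY)$ containing the generators $(\OOO_X/\III^n)\otimes\LLL_\alpha^{\otimes m}$. But the essential image of a triangulated functor is automatically closed only under shifts and cones; for closure under direct summands one needs $\Phi$ to be fully faithful (so that idempotents in the image lift to the idempotent-complete source), and you prove fully faithfulness \emph{afterwards}. So the argument is circular as ordered; and even reordered, the claim that those twists \emph{generate} $\TTT(\PerfXonY)$ as a thick subcategory is asserted without proof — it is essentially equivalent to the statement being proved and is not a direct citation from \cite{TT90}. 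The paper's $(\DE)$-condition avoids the generation claim entirely: one only needs that every $C^{\bullet}\in\CCC$ receives a quasi-isomorphism from a complex in $\BBB$, which is exactly what the \lq\lq enough objects to resolve\rq\rq\ criterion checks, and that in turn reduces to the concrete local-to-global epimorphism construction of Proposition~\ref{enough objects to resolve prop} using the affine cover $\{X_{f_k}\}$ from Lemma~\ref{divisorial lemma} and the finiteness Lemma~\ref{finiteness of cohomology}.

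The second gap is the one you yourself flag: the $\Ext$-comparison for full faithfulness is not carried out. \lq\lq Matching the two computations\rq\rq\ is precisely the content one has to supply, and it is nontrivial because $\WtrXonY$ is not closed under cokernels inside $\QcohX$, so it is not an abelian subcategory and the usual resolution-theorem template for $\DDD^b$ of a Serre subcategory does not apply. The Thomason--Trobaugh criterion is designed exactly to replace this: condition $(\DE)$ (resolving objects of the big category by objects from the small one, degree-wise split) yields \emph{both} a calculus-of-fractions comparison of $\Hom$'s and essential surjectivity simultaneously, with no separate $\Ext$-bookkeeping. Finally, one small but necessary piece you omit is the passage $\alpha$ from $\Ch^b(\WtrXonY)$ to perfect complexes in $\Ch^{-}(\WtrXonY)$: a strictly perfect model of a complex in $\BBB$ is bounded but not a priori a complex of weight $r$ Modules, and conversely a complex in $\BBB$ is only bounded above. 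The paper handles this with the Balmer--Schlichting truncation Lemma~\ref{Bal-Sch lemma}, observing that $\tau^{\geqq N}P^{\bullet}$ lands in $\Ch^b(\WtrXonY)$ because $\im d^{N-1}$ inherits support in $Y$ and the correct Tor-bound; this step is absent from your outline.
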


Now consider the inclusion functor 
$\WtrXonY \to \Ch^b(\WtrXonY)$ 
which sends 
$\cF$ in $\WtrXonY$ to the complex which is $\cF$ in degree $0$ 
and $0$ in other degrees. 
We denote by $K^S(\Ch^b(\WtrXonY); \qis)$ 
the $K$-theory spectrum of 
the Waldhausen category associated to $\Ch^b(\WtrXonY)$ whose 
weak equivalences are the quasi-isomorphisms. 
The inclusion above induces a homotopy equivalence 
$$
  K^S(\WtrXonY) \isoto K^S(\Ch^b(\WtrXonY);\qis) 
$$
by non-connected version 
of the Gillet-Waldhausen theorem in \cite{Sch04}. 
Therefore we get the following corollary.

\begin{cor}
In the notation above, we have the identities
\begin{gather*}
  K^S(\WtrXonY)\isoto K^S(X\on Y) \isoto K^B(X\on Y),\\
 HC(\WtrXonY)\isoto HC(\XonY).
\end{gather*}
\end{cor}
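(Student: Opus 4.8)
The plan is to deduce the corollary from the Main Theorem (Theorem~\ref{main theorem}) by two general principles together with the Gillet--Waldhausen reduction already recalled above; the substantive input, and the only genuine obstacle, lies in that theorem. For $K$-theory: since $\WtrXonY$ is an idempotent complete exact category (Lemma~\ref{weight lemma}), the non-connected Gillet--Waldhausen theorem of \cite{Sch04} gives $K^S(\WtrXonY)\isoto K^S(\Ch^b(\WtrXonY);\qis)$, the equivalence recalled before the statement. Next, Schlichting's non-connected $K$-theory \cite{Sch04}, \cite{Sch06} is invariant under a complicial exact functor that induces an equivalence of derived categories; applying this to the inclusion $\Ch^b(\WtrXonY)\inj\PerfXonY$, which is such a functor precisely by Theorem~\ref{main theorem}, yields $K^S(\Ch^b(\WtrXonY);\qis)\isoto K^S(\PerfXonY)$. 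Finally $K^B(\XonY)$, the Bass delooping of the Waldhausen $K$-theory of the perfect complexes supported on $Y$, agrees with $K^S(\PerfXonY)$ \cite{TT90}, \cite{Sch06}. Concatenating, and writing $K^S(\XonY):=K^S(\PerfXonY)$, one obtains
\[
  K^S(\WtrXonY)\ \isoto\ K^S(\XonY)\ \isoto\ K^B(\XonY).
\]
The cyclic homology case follows the identical scheme: the Gillet--Waldhausen-type identity $HC(\WtrXonY)\isoto HC(\Ch^b(\WtrXonY);\qis)$, Keller's invariance of cyclic homology under derived equivalence \cite{Kel98} applied via Theorem~\ref{main theorem}, and the identification $HC(\PerfXonY)\isoto HC(\XonY)$ of \cite{Kel98}, \cite{Wei96} combine to give $HC(\WtrXonY)\isoto HC(\XonY)$; passing to homotopy, resp.\ homology, groups then yields $K_q^B(\XonY)\simeq K_q^S(\WtrXonY)$ and $HC_q(\XonY)\simeq HC_q(\WtrXonY)$ for all $q\in\Z$.

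It thus remains to explain how I would prove Theorem~\ref{main theorem}, which carries all the content. It suffices to show that the induced triangulated functor $\Phi\colon\TTT(\Ch^b(\WtrXonY))\to\TTT(\PerfXonY)$ is fully faithful and essentially surjective. By the coherator (Proposition~\ref{coherator prop}, through Lemma~\ref{coherent case}) one may work throughout with quasi-coherent Modules; by the Global Resolution Theorem~\ref{global resol thm} every perfect complex supported on $Y$ is quasi-isomorphic to a strictly perfect one with the same cohomological support, and every weight-$r$ Module has --- its $r$-th syzygy being flat and finitely presented by Lemma~\ref{fund fact of flat module} --- a finite resolution of length $\leqq r$ by algebraic vector bundles. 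Full faithfulness of $\Phi$ should follow from a resolving-subcategory argument in the style of Keller: $\WtrXonY$ is closed under extensions and direct summands (Lemma~\ref{weight lemma}), closed under the kernel of any surjection onto one of its objects (support by Lemma~\ref{supp nt}, the bound on Tor-dimension from the long exact sequence of Tor together with Lemma~\ref{fund fact of flat module}, pseudo-coherence by the two-out-of-three property of Lemma~\ref{fund fact for psuedo-coh and perf}), and generating among pseudo-coherent quasi-coherent Modules supported on $Y$ (given such a Module, cover it by a vector bundle and tensor the latter with $\OX/\III^N$ for $N\gg 0$, which is of Tor-dimension $\leqq r$ by the filtration of Lemma~\ref{fund res reg clo}); together these features let one compute the $\Hom$-groups of $\TTT(\Ch^b(\WtrXonY))$ inside the ambient derived category and match them with those of $\TTT(\PerfXonY)$.

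The main obstacle is essential surjectivity of $\Phi$: every perfect complex with cohomological support in $Y$ must be quasi-isomorphic to a bounded complex of weight-$r$ Modules supported on $Y$. The plan is to observe that the essential image of $\Phi$ is a thick subcategory of $\TTT(\PerfXonY)$ --- stable under shifts and cones, since the mapping cone of a map of bounded complexes of weight-$r$ Modules is again one, and under direct summands, since $\TTT(\Ch^b(\WtrXonY))$ is idempotent complete, $\WtrXonY$ being so (Lemma~\ref{weight lemma}) --- and that it contains $\OX/\III=i_{*}\OOO_Y$, which is a weight-$r$ Module supported on $Y$: it is pseudo-coherent, supported on $Y$, and of Tor-dimension $\leqq r$, the regular immersion furnishing the Koszul complex as a local resolution and Tor-dimension being a local invariant (Lemma~\ref{fund fact of flat module}). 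Since $\OX/\III$ lies in this essential image, the image is all of $\TTT(\PerfXonY)$ as soon as $\OX/\III$ generates the latter as a thick subcategory; establishing that generation is, I expect, the real crux, and it is the point at which Grothendieck's local cohomology (Lemma~\ref{local cohomology lem}) enters, reducing the statement locally to the Koszul complex of a regular sequence cutting out $Y$.
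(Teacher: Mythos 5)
Your derivation of the corollary from the Main Theorem reproduces the paper's own proof: the non-connected Gillet--Waldhausen theorem of \cite{Sch04} gives $K^S(\WtrXonY)\isoto K^S(\Ch^b(\WtrXonY);\qis)$, Schlichting's approximation and comparison theorems from \cite{Sch06} upgrade the derived equivalence of Theorem~\ref{main theorem} to $K^S(\Ch^b(\WtrXonY);\qis)\isoto K^S(\PerfXonY)\isoto K^B(\XonY)$, and Keller's derived invariance from \cite{Kel99} handles cyclic homology. For the corollary as stated, this is exactly what the paper does and nothing further is needed.

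The appended sketch of Theorem~\ref{main theorem}, while outside the scope of the corollary, rests on a premise that fails. You reduce essential surjectivity to the claim that $\OX/\III$ thick-generates $\TTT(\PerfXonY)$, admitting this remains to be shown; but on a non-affine divisorial scheme it is generally false. What is true (by the Thomason classification) is that $\OX/\III$ generates $\TTT(\PerfXonY)$ as a thick \emph{tensor}-ideal; as a mere thick subcategory it typically does not, already because a single line bundle does not thick-generate $\Perf(\p1)$, so one cannot reach the necessary twists by the ample family starting from $\OX/\III$ using only shifts, cones and summands. The paper circumvents this entirely: it factors $\Ch^b(\WtrXonY)\onto{\alpha}\BBB\onto{\beta}\CCC\onto{\gamma}\PerfXonY$ and verifies the approximation condition $(\DE)$ or $(\DE)^{\op}$ of Lemma~\ref{Deri eq cri} at each stage. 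The decisive step, Proposition~\ref{enough objects to resolve prop}, applies Lemma~1.9.5 of \cite{TT90} (\lq\lq enough objects to resolve\rq\rq) to the additive subcategory $\DDD$ generated by \emph{all} $\LLL_\alpha^{\otimes m}\otimes_{\OX}\OX/\III^p$ --- precisely the twists your thick-generation argument omits --- together with Lemma~\ref{divisorial lemma} to produce affine opens of the form $X_{f_k}$. Also note that Lemma~\ref{local cohomology lem}, which you cite as the eventual tool for generation, serves in the paper only to supply an inverse to $\gamma_1:\CCC\inj\PerfqcXonY$; it contributes nothing to a thick-generation statement.
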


\begin{proof}
For the $K$-theory case, 
it is followed 
from the observation above 
and the Schlichting approximation theorem  
and the comparison theorem in \cite{Sch06}. 
For the cyclic homology case, 
it is followed from
the derived invariance by \cite{Kel99}.
\end{proof}

\section{Proof of the main theorem}

First we consider the following two categories. 
Let 
$\BBB$ be the category of perfect complexes in $\Ch^{-}(\WtrXonY)$ and 
$\CCC$ the category of perfect complexes of 
quasi-coherent $\OX$-Modules supported on $Y$. 
By Lemma~\ref{fund fact for psuedo-coh and perf}, 
the categories $\BBB$ and $\CCC$ are closed under extensions and 
direct summand in $\Ch(\Mod(X))$. 
Therefore, they are idempotent complete exact categories. 
Note that any perfect complex has globally finite Tor-amplitude 
on $X$ 
(Rem.~\ref{tor-amp rem} and Lem.~\ref{fund fact for psuedo-coh and perf}, (v)).
From Lemma~\ref{fund fact for psuedo-coh and perf}, (iii), 
we have the following natural exact inclusion functors
$$
\Ch^b(\WtrXonY) \onto{\alpha} 
  \BBB \onto{\beta} \CCC \onto{\gamma} \PerfXonY.
$$
We shall prove $\alpha$, $\beta$ and $\gamma$ 
induce category equivalences 
between their associated derived categories by using the following criterion.

\begin{lem}[\cite{TT90}, 1.9.7 and \cite{Tho93}] \label{Deri eq cri}
Let $i:\XXX \to \YYY$ be a fully faithful complicial exact functor 
between complicial biWaldhausen categories 
which closed under the formation of canonical homotopy pullbacks 
and pushouts and assume 
their weak equivalence classes are just quasi-isomorphism classes. 
If $i$ satisfies the condition $(\DE)$ or $(\DE)^{\op}$ below, 
then $i$ induces category equivalences between their derived categories.\\
\quad $(\DE)$  
For any object $Y$ in $\YYY$,  
there is an object $X$ in $\XXX$ and a weak equivalence $i(X) \to Y$.\\
\quad $(\DE)^{\op}$  
For any object $Y$ in $\YYY$, 
there is an object $X$ in $\XXX$ and a weak equivalence $Y \to i(X)$.
\end{lem}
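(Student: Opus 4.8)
\textbf{Proof strategy for Lemma~\ref{Deri eq cri}.}
The plan is to reduce the statement to the standard derived-category criterion of Thomason--Trobaugh (\cite{TT90}, 1.9.7), which says precisely that a fully faithful complicial exact functor $i:\XXX\to\YYY$ between complicial biWaldhausen categories, with the indicated closure and weak-equivalence hypotheses, induces an equivalence of derived categories once one knows that the induced triangulated functor $\TTT(\XXX)\to\TTT(\YYY)$ is essentially surjective. Thus the whole content of the lemma is: \emph{condition $(\DE)$ implies essential surjectivity of $\TTT(i)$, and so does $(\DE)^{\op}$}. I would first recall that $\TTT(\XXX)$ is obtained from $\XXX$ by formally inverting the quasi-isomorphisms, so every object of $\TTT(\YYY)$ is the image of an object $Y\in\YYY$, and two objects of $\YYY$ become isomorphic in $\TTT(\YYY)$ as soon as they are connected by a zig-zag of quasi-isomorphisms in $\YYY$ (using that the weak equivalences admit a calculus of fractions on these categories, which is part of the complicial biWaldhausen formalism).

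Granting that, condition $(\DE)$ gives, for each $Y\in\YYY$, an object $X\in\XXX$ together with a quasi-isomorphism $i(X)\to Y$; hence $\TTT(i)(X)\cong Y$ in $\TTT(\YYY)$, so $\TTT(i)$ is essentially surjective. The $(\DE)^{\op}$ case is identical after reversing the arrow: a quasi-isomorphism $Y\to i(X)$ is still inverted in $\TTT(\YYY)$ and still witnesses $Y\cong\TTT(i)(X)$. In both cases full faithfulness of $\TTT(i)$ is supplied verbatim by \cite{TT90}, 1.9.7 (or by \cite{Tho93}), which is where the hypotheses on closure under canonical homotopy pullbacks and pushouts and on the weak-equivalence classes being exactly the quasi-isomorphism classes are actually used; these hypotheses guarantee that $\TTT(\XXX)\to\TTT(\YYY)$ is well defined, triangulated, and fully faithful, so that essential surjectivity upgrades it to an equivalence. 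Therefore $i$ induces a category equivalence on derived categories, which is the assertion.

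The only genuine point to be careful about — the ``main obstacle'', though it is really bookkeeping rather than a difficulty — is making sure the formal inversion is carried out in the same framework in both references, i.e. that ``derived category of a complicial biWaldhausen category'' in \cite{TT90}, 1.9.7 agrees with $\TTT(-)$ as we use it, and that the fully-faithfulness half of that proposition does not secretly presuppose essential surjectivity. I would address this by quoting 1.9.7 in the precise form: \emph{if $i$ is fully faithful as a complicial exact functor and every object of $\YYY$ receives (or maps to) a quasi-isomorphism from (resp.\ into) an object of $i(\XXX)$, then $\TTT(i)$ is an equivalence}; this is exactly \cite{TT90}, 1.9.7 combined with the cofinality/approximation remarks there, so no independent argument is needed beyond citing it and observing that $(\DE)$ and $(\DE)^{\op}$ are literally the two hypotheses appearing in that statement.
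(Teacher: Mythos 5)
Your proposal lands exactly where the paper does: Lemma~\ref{Deri eq cri} is stated as a direct citation of \cite{TT90}, 1.9.7 together with \cite{Tho93}, with no independent argument given in the text, and your closing paragraph quotes essentially that result. So the approach is the same.

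One inaccuracy in your middle paragraph is worth flagging. You decompose the problem as ``full faithfulness of $\TTT(i)$ comes from the closure and weak-equivalence hypotheses; $(\DE)$/$(\DE)^{\op}$ only supply essential surjectivity.'' That split is not correct: full faithfulness of $i$ on the underlying categories does not by itself give full faithfulness of $\TTT(i)$, because a morphism in $\TTT(\YYY)$ between objects of $i(\XXX)$ is represented by a zig-zag passing through intermediate objects of $\YYY$, and one must use $(\DE)$ or $(\DE)^{\op}$ to replace those intermediaries by objects coming from $\XXX$ (and then use fully faithfulness of $i$ to pull the resulting maps back into $\XXX$). So the approximation condition is an input to fullness as well as to essential surjectivity, and \cite{TT90}, 1.9.7 takes full faithfulness of $i$ and the approximation hypothesis as joint hypotheses rather than delivering the fullness half for free. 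Your final sentence states 1.9.7 in the correct, full-strength form, so the conclusion is fine, but the intermediate account of which hypothesis does what is misleading. Also note that 1.9.7 as stated in \cite{TT90} handles the $(\DE)$ direction; the citation of \cite{Tho93} is there precisely to cover the dual $(\DE)^{\op}$ case, rather than this being an instance of the same statement ``with the arrow reversed.''
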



%
We shall prove that $\alpha$ induces category equivalence between their derived categories. 
To do so first we review the following lemma.

\begin{lem}[\cite{BS01}, 2.6] \label{Bal-Sch lemma}
Let $\EEE$ be an idempotent complete exact categories and 
$f:X^{\bullet} \to Y^{\bullet}$ a quasi-isomorphism 
between bounded above complexes in $\Ch(\EEE)$.
Assume $X^{\bullet}$ or $Y^{\bullet}$ is strictly bounded. 
Say the other one as $Z^{\bullet}$. 
Then there is a sufficiently small $N$ 
such that $Z^{\bullet} \to \tau^{\geqq N}Z^{\bullet}$ is a quasi-isomorphism.
\end{lem}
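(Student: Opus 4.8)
The plan is to produce the canonical (good) truncation $\tau^{\geq N}Z^\bullet$ by hand as a complex over $\EEE$, extracting from the mapping cone of $f$ the cycle objects it needs, and then to recognise the comparison map $Z^\bullet \to \tau^{\geq N}Z^\bullet$ as a quasi-isomorphism because its kernel is visibly an acyclic complex. Write $W^\bullet$ for whichever of $X^\bullet$, $Y^\bullet$ is strictly bounded, say $W^n = 0$ for $n \notin [a,b]$, and $Z^\bullet$ for the other. First I would form $C^\bullet := \Cone(f)$ (of $f\colon Z^\bullet \to W^\bullet$ or $f\colon W^\bullet \to Z^\bullet$, according to which case we are in). Since $f$ is a quasi-isomorphism, its cone $C^\bullet$ is isomorphic to $0$ in $\cD(\EEE)$; because $\EEE$ is idempotent complete this forces $C^\bullet$ to be \emph{acyclic} in the strong sense that each differential $d_C^n$ factors as an admissible epimorphism $C^n \epi \im(d_C^n)$ followed by an admissible monomorphism $\im(d_C^n) \mono C^{n+1}$, with $\im(d_C^{n-1}) \mono C^n \epi \im(d_C^n)$ a kernel–cokernel pair for every $n$ (this is precisely where idempotent completeness is needed; cf.\ \cite{BS01}). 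Moreover $C^\bullet$ is bounded above, since $Z^\bullet$ and $W^\bullet$ are.

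The key observation is that the $W$-entries of $C^\bullet$ vanish in low degrees, so that $C^\bullet$ coincides with $Z^\bullet$ there up to a shift and a sign; concretely, there is a bound $N_0$ (depending on $a$) below which the terms and differentials of $C^\bullet$ are those of $Z^\bullet$. Hence for every $N \leq N_0$ the image object $\im(d_Z^{N-1})$ exists in $\EEE$, being one of the image objects of the acyclic complex $C^\bullet$; the differential $d_Z^{N-1}$ factors as $Z^{N-1} \epi \im(d_Z^{N-1}) \mono Z^N$; and $\im(d_Z^{N-1}) \mono Z^N \epi \im(d_Z^N)$ is a kernel–cokernel pair. In other words $Z^\bullet$ is itself acyclic in all degrees $\leq N_0$, so in particular $\coker(d_Z^{N-1})$ exists in $\EEE$ (and equals $\im(d_Z^N)$) for every $N \leq N_0$.

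Now I fix $N \leq N_0$. Then
\[
\tau^{\geq N}Z^\bullet\colon\quad \cdots \to 0 \to \coker(d_Z^{N-1}) \to Z^{N+1} \to Z^{N+2} \to \cdots
\]
(with outgoing differential induced by $d_Z^N$) is a genuine object of $\Ch(\EEE)$, and there is the evident chain map $Z^\bullet \to \tau^{\geq N}Z^\bullet$: the identity in degrees $> N$, the canonical epimorphism $Z^N \epi \coker(d_Z^{N-1})$ in degree $N$, and zero below. Its termwise kernel is the subcomplex
\[
A^\bullet\colon\quad \cdots \to Z^{N-2} \to Z^{N-1} \epi \im(d_Z^{N-1}) \to 0 \to \cdots
\]
with $\im(d_Z^{N-1})$ placed in degree $N$; it is a subcomplex because $\im(d_Z^{N-1}) \mono Z^N$ followed by $d_Z^N$ vanishes (as $\im(d_Z^{N-1}) = \Ker(d_Z^N)$ by the previous step), and $0 \to A^\bullet \to Z^\bullet \to \tau^{\geq N}Z^\bullet \to 0$ is a degreewise conflation of complexes, hence induces a distinguished triangle in $\cD(\EEE)$. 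By the previous paragraph the differentials of $A^\bullet$ assemble precisely into the kernel–cokernel pairs $\im(d_Z^{n-1}) \mono Z^n \epi \im(d_Z^n)$ for $n < N$ together with $\im(d_Z^{N-2}) \mono Z^{N-1} \epi \im(d_Z^{N-1})$ at the top, so $A^\bullet$ is acyclic and therefore $\cong 0$ in $\cD(\EEE)$. The triangle then shows that $Z^\bullet \to \tau^{\geq N}Z^\bullet$ is an isomorphism in $\cD(\EEE)$, i.e.\ a quasi-isomorphism. Since this holds for every $N \leq N_0$, the lemma follows.

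The step I expect to be the real obstacle is the one already flagged: making sense of $\tau^{\geq N}Z^\bullet$ as an honest complex over $\EEE$. In an arbitrary exact category cokernels need not exist, so it is genuinely something to prove that the relevant images and cokernels of the low-degree differentials of $Z^\bullet$ exist in $\EEE$; this is exactly what the idempotent-completeness hypothesis supplies, routed through the acyclicity (in the strong, factorisation sense) of the bounded-above mapping cone $C^\bullet$. Once those objects are available, both the construction of the truncation and the verification that $Z^\bullet \to \tau^{\geq N}Z^\bullet$ is a quasi-isomorphism are purely formal manipulations with conflations of complexes.
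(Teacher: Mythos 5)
The overall architecture of your argument—form the mapping cone, observe it agrees with $Z^\bullet$ in low degrees, extract the needed image objects from its acyclicity, build $\tau^{\geq N}Z^\bullet$, and identify the kernel of $Z^\bullet \to \tau^{\geq N}Z^\bullet$ as an acyclic subcomplex—is the right shape, and the last two thirds of your write-up (once the image objects are available in $\EEE$) are routine and correct. The paper itself does not prove this lemma; it cites \cite{BS01}, so there is no in-text proof to compare against.

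The problem is the step you yourself flag as ``the real obstacle'': the assertion that, because $\EEE$ is idempotent complete, the bounded-above cone $C^\bullet$ ``is acyclic in the strong sense'' (i.e.\ each differential factors as an admissible epi followed by an admissible mono, with the associated short sequences conflations in $\EEE$). This is precisely the content of \cite[2.6]{BS01}; invoking that reference here is circular, since it is the statement you are asked to prove. Moreover the justification you give is not an accurate description of what idempotent completeness does. Saying ``$C^\bullet \cong 0$ in $\cD(\EEE)$'' only places $C^\bullet$ in the thick closure of the acyclics in $K(\EEE)$; idempotent completeness (via the Balmer--Schlichting/Bühler theorem) ensures the acyclics are closed under retracts in the homotopy category, so it yields that $C^\bullet$ is \emph{homotopy equivalent to} an acyclic complex, not that $C^\bullet$ is itself termwise acyclic with cycles in $\EEE$. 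And if instead one uses that $C^\bullet$ is $\cA$-acyclic (vanishing cohomology in the Gabriel--Quillen embedding), the passage from $\cA$-acyclic to $\EEE$-acyclic is not automatic for a complex that is only bounded above: the descending-induction step requires one to show that $C^{n-1}\to Z^n(C)$ is an admissible epimorphism in $\EEE$ (equivalently, that $Z^{n-1}(C)\in\EEE$), which is exactly the point in question. In the familiar special cases (vector bundles, projective modules, MCM modules, modules of bounded projective dimension) this follows from ad hoc splitting or depth arguments, not from idempotent completeness per se; for an arbitrary idempotent complete $\EEE$ the needed argument must be reconstructed, and you have not supplied it. Until that step is genuinely proved, the proof is incomplete.
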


\begin{lem} 
The inclusion $\alpha:\Ch^b(\WtrXonY)\inj \BBB$ satisfies 
the condition $(\DE)^{\op}$ in Lemma~\ref{Deri eq cri}. 
In particular, we have an equivalence of categories 
$$
  \TTT(\Ch^b(\WtrXonY)) \isoto \TTT(\BBB).
$$
\end{lem}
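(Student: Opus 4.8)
\emph{Proof sketch.} The plan is, for a given object $Y^\bullet$ of $\BBB$, to produce a strictly bounded complex in $\WtrXonY$ receiving a quasi-isomorphism from $Y^\bullet$; the candidate is a good truncation of $Y^\bullet$. First I would record the relevant finiteness: since $Y^\bullet\in\BBB$ is a perfect complex, it is pseudo-coherent of globally finite Tor-amplitude by Lemma~\ref{fund fact for psuedo-coh and perf}, (v), and Remark~\ref{tor-amp rem}, (i), so I fix an interval $[p,q]$ containing a Tor-amplitude of $Y^\bullet$; taking $\cF=\OX$ this also gives $\Homo^n(Y^\bullet)=0$ for $n<p$. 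As an object of $\Ch^-(\WtrXonY)$ we moreover have $Y^n=0$ for $n>b$ for some $b$. Then I would form the canonical epimorphism onto the good truncation $Y^\bullet\epi Z^\bullet:=\tau^{\geq p}Y^\bullet$, whose degree-$p$ term is $Z^p:=\coker(Y^{p-1}\to Y^p)$ and whose remaining terms are the $Y^n$, $p<n\leq b$. Because $\Homo^n(Y^\bullet)=0$ for $n<p$ this map is a quasi-isomorphism, and $Z^\bullet$ is concentrated in degrees $[p,b]$, hence strictly bounded. (Alternatively one could reach this through Lemma~\ref{Bal-Sch lemma}, using that a perfect complex on a divisorial scheme is quasi-isomorphic to a strictly perfect one.)

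Next I would check that $Z^\bullet$ actually lies in $\Ch^b(\WtrXonY)$; the only term in question is $Z^p$, the others being $Y^n\in\WtrXonY$ by hypothesis. Its support lies in $Y$ since $Z^p$ is a quotient of $Y^p$. For pseudo-coherence and Tor-dimension I would exploit the short exact sequence of complexes
\[
0\to\sigma^{\geq p+1}Z^\bullet\to Z^\bullet\to Z^p[-p]\to 0,
\]
where $\sigma^{\geq p+1}$ is the brutal truncation, which gives a distinguished triangle in $\DDD(\Mod(X))$. Here $\sigma^{\geq p+1}Z^\bullet$ is a strictly bounded complex of pseudo-coherent $\OX$-Modules, hence pseudo-coherent, and $Z^\bullet\simeq Y^\bullet$ is pseudo-coherent, so two-out-of-three (Lemma~\ref{fund fact for psuedo-coh and perf}, (vi)) forces $Z^p[-p]$, and thus $Z^p$, to be pseudo-coherent.

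The one step that is not pure bookkeeping — and the main obstacle — is bounding the Tor-dimension of $Z^p$. I would argue thus: $\sigma^{\geq p+1}Z^\bullet$ is a strictly bounded complex of $\OX$-Modules of Tor-dimension $\leq r$ placed in degrees $[p+1,b]$, hence has Tor-amplitude contained in $[p+1-r,\,b]$ (Remark~\ref{tor-amp rem}, (ii), together with Lemma~\ref{fund fact of flat module}), while $Z^\bullet\simeq Y^\bullet$ has Tor-amplitude contained in $[p,q]$. Feeding these two facts into the long exact sequence of $(-)\otimes_{\OX}^{L}\cF$ along the displayed triangle shows $\Homo^k(Z^p[-p]\otimes_{\OX}^{L}\cF)=0$ for every $\cF$ and every $k<p-r$, i.e.\ $\Tor_i^{\OX}(Z^p,\cF)=0$ for all $i>r$ and all $\cF$, that is $\Tordim(Z^p)\leq r$ by Lemma~\ref{fund fact of flat module}, (ii). Hence $Z^p\in\WtrXonY$, so $Z^\bullet\in\Ch^b(\WtrXonY)$ and $Y^\bullet\epi Z^\bullet$ is the desired weak equivalence, proving that $\alpha$ satisfies $(\DE)^{\op}$. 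Since $\alpha$ is a fully faithful complicial exact functor between complicial biWaldhausen categories closed under canonical homotopy pullbacks and pushouts whose weak equivalences are the quasi-isomorphisms, Lemma~\ref{Deri eq cri} then yields the equivalence $\TTT(\Ch^b(\WtrXonY))\isoto\TTT(\BBB)$. I should stress that perfection of $Y^\bullet$ (not merely pseudo-coherence) is what is being used in an essential way: it is exactly what makes the cohomology bounded below and the Tor-amplitude finite, so that the truncation terminates in a complex whose new bottom term has controlled Tor-dimension.
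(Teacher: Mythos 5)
Your argument is correct and lands on the same construction (the good truncation of a complex in $\BBB$ at the bottom of its Tor--amplitude), but the way you justify it is genuinely different from the paper's. The paper introduces the auxiliary exact category $\EEE$ of pseudo-coherent $\OX$-Modules of Tor--dimension $\leqq r$ \emph{without} the support condition, uses the global resolution theorem together with the inclusion $\sPerf(\XonY)\subset\DDD$ (perfect complexes in $\Ch^-(\EEE)$ supported on $Y$) to see that $P^\bullet$ is quasi-isomorphic to a strictly bounded complex in $\DDD$, and then invokes the Balmer--Schlichting truncation lemma applied to $\EEE$ to get $P^\bullet\isoto\tau^{\geqq N}P^\bullet$ with the truncation already living in $\Ch^b(\EEE)$; after that, only the support of the cokernel term needs to be checked by hand (it is a quotient of $P^N$). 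You instead bypass both the global resolution theorem and the Balmer--Schlichting lemma: you fix the truncation index $p$ explicitly as the lower end of the Tor--amplitude, and you verify by hand that the new bottom term $Z^p=\coker(Y^{p-1}\to Y^p)$ is pseudo-coherent and of Tor--dimension $\leqq r$ using the brutal-truncation triangle $\sigma^{\geqq p+1}Z^\bullet\to Z^\bullet\to Z^p[-p]$ and the long exact sequence for $\otimes^L$. This is a more self-contained and elementary route (the only nontrivial inputs are the two-out-of-three properties in Lemma~\ref{fund fact for psuedo-coh and perf} and Remark~\ref{tor-amp rem}), at the cost of the explicit Tor--amplitude bookkeeping; the paper's version buys brevity by delegating the existence of the truncation in the ambient exact category to Lemma~\ref{Bal-Sch lemma}. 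Your closing remark correctly identifies that perfection (finite Tor--amplitude) rather than mere pseudo-coherence is what makes the truncation terminate.
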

\begin{proof}
Let 
$\EEE$ be the category of pseudo-coherent $\OX$-Modules of Tor-dimension $\leqq r$. 
It is closed under extensions (Lem.~\ref{fund fact of flat module}, (iii)) 
and direct summand (Lem.~\ref{fund fact for psuedo-coh and perf}, (vii)). 
In particular, it is an idempotent complete exact category. 
We denote by 
$\DDD$ the category of perfect complexes in $\Ch^{-}(\EEE)$ whose 
cohomological support is in $Y$. 
Fix a complex $P^{\bullet}$ in $\BBB$. 
By the global resolution theorem (Th.~\ref{global resol thm}), 
$\Pb$ is quasi-isomorphic to a strict perfect complex. 
Since we have an inclusion $\sPerf(X\on Y)\subset\DDD$, 
$P^{\bullet}$ is quasi-isomorphic to a bounded complex in $\DDD$.
Now applying Lemma~\ref{Bal-Sch lemma} to $\EEE$, 
there exists an integer $N$ such that 
the canonical map $P^{\bullet} \to \tau^{\geqq N}P^{\bullet}$ 
is a quasi-isomorphism. 
Since $\Supp(\im d^{N-1})$ is in $Y$, 
$\tau^{\geqq N}P^{\bullet}$ is actually in $\Ch^b(\WtrXonY)$. 
The assertion follows from it.
\end{proof}

%

\begin{prop} \label{enough objects to resolve prop} 
The inclusion functor $\beta: \BBB\inj \CCC$ satisfies the condition $(\DE)$ 
in Lemma~\ref{Deri eq cri}. 
\end{prop}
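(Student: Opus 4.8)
The plan is to show that every object of $\CCC$ admits a quasi-isomorphism from an object of $\BBB$, so that $(\DE)$ holds and Lemma~\ref{Deri eq cri} applies. Let $C^{\bullet}\in\CCC$. By Lemma~\ref{fund fact for psuedo-coh and perf}, (iii), its cohomology Modules are quasi-coherent, finitely presented and supported on $Y$, and only finitely many are non-zero; so, after replacing $C^{\bullet}$ by a quasi-isomorphic complex by a truncation, I may assume $C^{\bullet}$ is a bounded complex of finitely presented quasi-coherent $\OX$-Modules supported on $Y$. It is then enough to produce a bounded above complex $P^{\bullet}$ with terms in $\WtrXonY$ together with a quasi-isomorphism $P^{\bullet}\to C^{\bullet}$: indeed $P^{\bullet}$ is then perfect by Lemma~\ref{fund fact for psuedo-coh and perf}, (vi), and bounded above, hence lies in $\BBB$, and $\beta(P^{\bullet})\to C^{\bullet}$ is the desired weak equivalence.

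The key input is a supply of weight-$r$ Modules onto which every finitely presented quasi-coherent Module supported on $Y$ surjects. I will use that $\OX/\III^{n}$, and more generally $\VVV\otimes_{\OX}\OX/\III^{n}$ for an algebraic vector bundle $\VVV$, lies in $\WtrXonY$. It is supported on $Y$, and its $\III$-adic filtration $\OX/\III^{n}\supset\III/\III^{n}\supset\dots\supset\III^{n-1}/\III^{n}\supset 0$ has successive quotients $\III^{k}/\III^{k+1}$ which are locally free Modules of finite rank over $\OX/\III$ (Lemma~\ref{fund res reg clo}, (i)); because $Y\inj X$ is a regular closed immersion of codimension $r$, the Module $\OX/\III$ is Zariski-locally the cokernel of a regular map $u\colon\LLL\to\OX$ with $\LLL$ of rank $r$ (Def.~\ref{regular immersion def}), hence locally resolved by the Koszul complex $\kos^{\bullet}(u)$ of length $r$, so each quotient $\III^{k}/\III^{k+1}$ has Tor-dimension $\leqq r$ (Lemma~\ref{fund fact of flat module}, (iv)); applying Lemma~\ref{fund fact of flat module}, (iii), along the filtration yields $\Tordim(\OX/\III^{n})\leqq r$, and tensoring with the flat Module $\VVV$ does not raise Tor-dimension. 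Since $X$ is quasi-compact and $\III$ is of finite type (Lemma~\ref{fund res reg clo}, (i)), every finitely presented quasi-coherent Module $\GGG$ with $\Supp\GGG\subset Y$ is annihilated by $\III^{n}$ for $n\gg 0$; and since $X$ is divisorial, the canonical surjection onto $\GGG$ from a finite sum of negative powers of members of an ample family of line bundles factors through a surjection $\VVV\otimes_{\OX}\OX/\III^{n}\epi\GGG$ with $\VVV\otimes_{\OX}\OX/\III^{n}\in\WtrXonY$.

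Granting this, I will build $P^{\bullet}\to C^{\bullet}$ by the standard termwise-surjective resolution of a bounded complex: letting $b$ be the top degree of $C^{\bullet}$, set $P^{i}=0$ for $i>b$ and, proceeding downward, choose $P^{i}\in\WtrXonY$ surjecting onto the fibre product $C^{i}\times_{C^{i+1}}P^{i+1}$ --- which is again a finitely presented quasi-coherent Module supported on $Y$, being the kernel of a map of such Modules, so the previous paragraph applies --- and take $P^{i}\to P^{i+1}$, $P^{i}\to C^{i}$ to be the two projections. The resulting bounded above complex $P^{\bullet}$ has terms in $\WtrXonY$, the maps assemble into a degreewise surjective chain map $P^{\bullet}\to C^{\bullet}$, and the usual diagram chase shows it is a quasi-isomorphism. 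The step I expect to be the real obstacle is the second paragraph: recognising $\OX/\III^{n}$ as a weight-$r$ Module is exactly where the regular-immersion hypothesis enters, through the locally free graded pieces of the $\III$-adic filtration and the Koszul resolution of $\OX/\III$; one must also verify carefully --- this is routine on a noetherian scheme but genuinely needs attention on a general divisorial one --- that the reductions of the first paragraph are valid and that finitely presented quasi-coherent Modules supported on $Y$ are stable under the fibre products used in the induction.
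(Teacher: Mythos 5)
Your overall strategy agrees with the paper's: resolve objects of $\CCC$ by bounded above complexes whose terms are twists $\LLL_{\alpha}^{\otimes m}\otimes_{\OX}\OX/\III^{p}$, which land in $\WtrXonY$ by the same argument (Koszul resolution of $\OX/\III$, the graded pieces $\III^{k}/\III^{k+1}$ locally free over $\OX/\III$, and d\'evissage along the $\III$-adic filtration). But the way you set up the resolution has a genuine gap, and it is exactly the gap that the paper avoids by invoking Lemma~\ref{enough objects to resolve lemma} (Thomason--Trobaugh 1.9.5) as a black box.

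The problem is your first reduction. Lemma~\ref{fund fact for psuedo-coh and perf}~(iii) only gives that the cohomology sheaves of a pseudo-coherent complex are quasi-coherent; it does not say they are finitely presented, and on a general divisorial (non-noetherian) scheme they need not be. More importantly, even granting cohomological boundedness, the smart truncation $\tau^{[a,b]}C^{\bullet}$ has terms $\coker d^{a-1}, C^{a+1},\dots,C^{b-1},\ker d^{b}$, which are merely quasi-coherent; there is no way to ``replace $C^{\bullet}$ by a truncation'' and come out with a bounded complex of finitely presented Modules. Since $\CCC$ allows arbitrary quasi-coherent terms, your downward induction then runs into two further obstructions: the fibre products $C^{i}\times_{C^{i+1}}P^{i+1}$ are kernels of maps of quasi-coherent (not finitely presented) Modules, and even when the ambient Modules are finitely presented, kernels need not be finitely presented outside the noetherian or coherent-ring setting. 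Your own closing sentence flags this, but it is not a routine verification to be deferred; without it the induction does not start.

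The paper's proof sidesteps all of this by working only with what is actually controllable. Lemma~\ref{enough objects to resolve lemma} asks you to surject onto the single cohomology sheaf $\Homo^{n-1}(C^{\bullet})$, not onto the terms of a truncation or a fibre product. The paper then produces that surjection by (1) Lemma~\ref{divisorial lemma}, to cover $X$ by finitely many affines $X_{f_{k}}$ on which $C^{\bullet}$ is quasi-isomorphic to a strictly perfect complex; (2) Lemma~\ref{finiteness of cohomology}, to see $\Homo^{n-1}(C^{\bullet})|_{X_{f_{k}}}$ is of finite type; (3) choosing a finite-type sub-Module $\GGG$ of the given $\FFF|_{X_{f_{k}}}$ surjecting onto it, killed by $\III^{p_{k}}|_{X_{f_{k}}}$, and hence receiving a surjection from a free $\OX/\III^{p_{k}}|_{X_{f_{k}}}$-Module; and (4) twisting by $\LLL_{\alpha_{k}}^{\otimes -m_{k}s_{k}}$ to extend these local maps to global ones. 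This local-to-global step via the ample family is exactly what lets one produce objects of $\DDD$ surjecting globally onto $\Homo^{n-1}(C^{\bullet})$; nothing in it requires the ambient complex to have finitely presented terms. So: keep your identification of the resolving objects, but drop the truncation reduction and the by-hand induction, and instead verify condition (e) of Lemma~\ref{enough objects to resolve lemma} along the lines just described.
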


To prove Proposition~\ref{enough objects to resolve prop}, 
we need the following lemmas.

\begin{lem}
\label{weight ex}
$\mathrm{(i)}$ Let $\cI$ be the definition ideal of $Y$. 
Then $\OX/\cI^p$ is of weight $r$ for any non-negative integer $p$.

\sn
$\mathrm{(ii)}$ 
Let $\cF$  be a pseudo-coherent $\OX$-Module of weight $r$ and 
$\cL$ an algebraic vector bundle. Then, 
$\LLL\otimes_{\OOO_X}\FFF$ is of weight $r$.
\end{lem}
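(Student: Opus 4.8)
The plan for (i) is to present $\OX/\cI^p$ as a finite iterated extension of locally free $\OX/\cI$-Modules, thereby reducing everything to the case $p=1$, which is controlled by the Koszul resolution. First I would record the needed facts about $\OX/\cI$ itself. By Lemma~\ref{fund res reg clo} the immersion $Y\hookrightarrow X$ is, locally on $X$, cut out by a regular sequence $f_1,\dots,f_r$, so locally $\OX/\cI$ is resolved by the Koszul complex $\kos^{\bullet}(u)$, a strictly perfect complex of vector bundles concentrated in degrees $[-r,0]$. Hence $\OX/\cI$ is perfect, in particular pseudo-coherent, with $\Tordim(\OX/\cI)\leqq r$ (apply Lemma~\ref{fund fact of flat module}, (iv), since vector bundles are flat). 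Next, by Lemma~\ref{fund res reg clo}, (i), each graded piece $\cI^n/\cI^{n+1}$ is a locally free $\OX/\cI$-Module of finite type; locally on $X$ it is therefore a finite direct sum of copies of $\OX/\cI$, so by Lemma~\ref{fund fact for psuedo-coh and perf}, (vii) and the locality of pseudo-coherence it is pseudo-coherent, it has $\Tordim(\cI^n/\cI^{n+1})\leqq r$ (Lemma~\ref{fund fact of flat module}, (iv) again), and $\Supp(\cI^n/\cI^{n+1})\subseteq Y$.

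Then I would run the finite filtration
\[
  0=\cI^p/\cI^p\subset\cI^{p-1}/\cI^p\subset\cdots\subset\cI/\cI^p\subset\OX/\cI^p ,
\]
whose successive quotients are the $\cI^n/\cI^{n+1}$ for $0\leqq n\leqq p-1$. Repeated application of the $2$ out of $3$ property for pseudo-coherence (Lemma~\ref{fund fact for psuedo-coh and perf}, (vi)) gives that $\OX/\cI^p$ is pseudo-coherent; repeated application of Lemma~\ref{fund fact of flat module}, (iii) gives $\Tordim(\OX/\cI^p)\leqq\max_{0\leqq n\leqq p-1}\Tordim(\cI^n/\cI^{n+1})\leqq r$; and repeated application of Lemma~\ref{supp nt}, (i) gives $\Supp(\OX/\cI^p)\subseteq Y$. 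Since $Y\hookrightarrow X$ is a regular closed immersion of codimension $r$ by hypothesis, Definition~\ref{weight def} is satisfied, so $\OX/\cI^p$ is of weight $r$; in fact it lies in $\WtrXonY$.

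For (ii) the argument is purely local. Over an open cover of $X$ trivialising $\cL$, the restriction of $\LLL\otimes_{\OX}\FFF$ is a finite direct sum of copies of $\FFF$. Hence $\LLL\otimes_{\OX}\FFF$ is pseudo-coherent by Lemma~\ref{fund fact for psuedo-coh and perf}, (vii) together with locality of pseudo-coherence, and $\Tordim(\LLL\otimes_{\OX}\FFF)\leqq\Tordim(\FFF)\leqq r$ by Lemma~\ref{fund fact of flat module}, (iv); equivalently, $\LLL$ is flat, so $\Tor^{\OX}_n(\LLL\otimes_{\OX}\FFF,\cK)\cong\LLL\otimes_{\OX}\Tor^{\OX}_n(\FFF,\cK)$, which vanishes for $n>r$. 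Finally $\Supp(\LLL\otimes_{\OX}\FFF)=\Supp\FFF$ is contained in the same closed subscheme $Y$ that witnesses $\FFF$ being of weight $r$, so $\LLL\otimes_{\OX}\FFF$ is again of weight $r$ (and stays in $\WtrXonY$ if $\FFF$ does).

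I do not expect a genuine obstacle here; the one point requiring care is that a divisorial scheme is only assumed coherent (quasi-compact and quasi-separated), not noetherian, so noetherian shortcuts are unavailable. This is exactly why the right inputs are the local description of a regular immersion by a regular sequence and the identification of $\Gr_{\cI}(\OX)$ in Lemma~\ref{fund res reg clo} — these produce a \emph{finite} filtration with locally free graded pieces — and why every Tor-dimension estimate is pushed down to stalks via Lemma~\ref{fund fact of flat module}, (iv).
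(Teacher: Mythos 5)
Your proof follows essentially the same route as the paper's: for (i), the Koszul resolution gives $\OX/\cI\in\WtrXonY$, the locally free graded pieces $\cI^n/\cI^{n+1}$ inherit this, and your finite filtration of $\OX/\cI^p$ is just a repackaging of the paper's inductive d\'evissage on the short exact sequences $0\to\cI^{n+1}/\cI^{n+p}\to\cI^{n}/\cI^{n+p}\to\cI^{n}/\cI^{n+1}\to 0$, while (ii) is the same flatness-plus-support argument. One small remark: since a divisorial scheme is only assumed coherent and not noetherian --- a point you yourself raise --- you should invoke the Koszul complex of a locally given regular map $u:\cL\to\OX$ as in Definition~\ref{regular immersion def}, rather than a regular sequence $f_1,\ldots,f_r$, which Lemma~\ref{fund res reg clo}(ii) supplies only when $X$ is noetherian.
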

\begin{proof}
(i) 
First we notice that $\OOO_X/\III$ is in $\WtrXonY$ by Koszul resolution. 
Next 
since $\III^n/\III^{n+1}$ is locally isomorphic to direct sum of $\OOO_X/\III$, 
we learn that $\III^n/\III^{n+1}$ is also in $\WtrXonY$ 
by Lemma~\ref{fund fact of flat module} (iv). 
Using Lemma~\ref{weight lemma} for 
$$
  0 \to \III^{n+1}/\III^{n+p} \to \III^n/\III^{n+p} \to \III^n/\III^{n+1} \to 0,
$$
the d\'evissage argument shows that 
$\III^n/\III^{n+p}$ is also in $\WtrXonY$ 
for any non-negative integer $n$ and positive integer $p$. 

\noindent 
(ii)
Since $\LLL$ is flat, 
we have an inequality $\Tordim(\LLL\otimes_{\OOO_X}\FFF)\leqq r$. 
We also have a formula 
$$
  \Supp \LLL\otimes_{\OOO_X}\FFF = \Supp \LLL \cap \Supp \FFF \subset Y.
$$
Therefore $\LLL\otimes_{\OOO_X}\FFF$ is of weight $r$. 
\end{proof}
\begin{lem}[\cite{TT90}, Lem.~1.9.5] 
\label{enough objects to resolve lemma}
Let $\AAA$ be an abelian category and 
$\DDD$ a full sub additive category of $\AAA$. 
Let $\CCC$ be a full subcategory of $\Ch(\AAA)$ satisfies the following conditions:

\noindent
{\rm (a)} $\CCC$ is closed under quasi-isomorphisms. 
That is, any complex quasi-isomorphic to an object in $\CCC$ is also in $\CCC$.\\
{\rm (b)} Every complex in $\CCC$ is cohomologically bounded above.\\
{\rm (c)} $\Ch^b(\DDD)$ is contained in $\CCC$.\\
{\rm (d)} $\CCC$ contains the mapping cone of any map 
from an object in $\Ch^b(\DDD)$ to an object in $\CCC$. \\
Finally, Suppose the following condition, 
so ``$\DDD$ has enough objects to resolve'':\\
{\rm (e)} 
For any integer $n$, any $C^{\bullet}$ in $\CCC$ such that 
$\Homo^i(C^{\bullet})=0$ for any $i\geqq n$ and any epimorphism in $\AAA$, 
$A \twoheadrightarrow \Homo^{n-1}(C^{\bullet})$, 
then there exists a $D$ in $\DDD$ and a morphism $D \to A$ 
such that the composite $D\twoheadrightarrow \Homo^{n-1}(C^{\bullet})$ is 
an epimorphism in $\AAA$.

Then, for any $D^{\bullet}$ in $\Ch^{-}(\DDD) \cap \CCC$, 
any $C^{\bullet}$ in $\CCC$, and any morphism $x:D^{\bullet} \to C^{\bullet}$, 
there exist a $E^{\bullet}$ in $\Ch^{-}(\DDD) \cap \CCC$, 
a degree-wise split monomorphism $a:D^{\bullet}\to E^{\bullet}$ and 
a quasi-isomorphism $y:E^{\bullet}\isoto C^{\bullet}$ such that $x=y\circ a$. 
Moreover if $x:D^{\bullet} \to C^{\bullet}$ is an $n$-quasi-isomorphism 
for some integer $n$, then one may choose $E^{\bullet}$ above so that 
$a^k:D^k \to E^k$ is an isomorphism for $k\geqq n$. 
\end{lem}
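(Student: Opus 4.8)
The plan is to prove Lemma~\ref{enough objects to resolve lemma} by a descending induction on the degree, building $E^{\bullet}$ one term at a time from a top degree downward, with hypothesis (e) playing the role that ``enough projectives'' plays in constructing a projective resolution. Since $C^{\bullet}$ and $D^{\bullet}$ are cohomologically bounded above, first fix an integer $N$ with $\Homo^{i}(C^{\bullet})=0$ and $D^{i}=0$ for all $i\geqq N$, and set $E^{k}=D^{k}=0$ for $k\geqq N$.

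The inductive data to carry, having reached degree $k$, is: a complex $E^{>k}$ with only finitely many nonzero terms, all lying in $\DDD$; a degree-wise split monomorphism $D^{>k}\to E^{>k}$ realized degreewise as the inclusion of a direct summand, so that each $E^{j}=D^{j}\oplus(\text{a new object of }\DDD)$; and a chain map $y^{>k}\colon E^{>k}\to C^{\bullet}$ restricting $x$ on the $D$-summands, such that the mapping cone of $y^{>k}$ is acyclic in all degrees $>k$. Conditions (c) and (d) are exactly what makes this mapping cone lie in $\CCC$: $E^{>k}\in\Ch^{b}(\DDD)\subseteq\CCC$ by (c), and the cone of a map from $\Ch^{b}(\DDD)$ to an object of $\CCC$ is again in $\CCC$ by (d); being in $\CCC$, it is cohomologically bounded above by (b). The passage from $k$ to $k-1$ is the core step: apply hypothesis (e) to this cone at the highest degree $m$ at which its cohomology is still nonzero (so $m\leqq k$), producing an object $D_{0}\in\DDD$ that surjects, through a suitable epimorphism out of a term of $C^{\bullet}$, onto the surviving cohomology; lift this to the chain level, set $E^{k-1}:=D^{k-1}\oplus D_{0}$, extend the differential by $d_{D}$ followed by the summand inclusion on the $D^{k-1}$ factor and by the constructed map on $D_{0}$, extend the split monomorphism as the inclusion of the $D^{k-1}$ factor, and extend $y$ by $x^{k-1}$ on $D^{k-1}$ and by the chosen lift on $D_{0}$. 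A routine diagram chase then confirms $d^{2}=0$, that $y$ remains a chain map, and that the enlarged cone is acyclic in degrees $\geqq k$ as well.

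Because $D^{\bullet}$ and the running cone are bounded above, no summand is ever adjoined above degree $N$, so the construction yields an honest complex $E^{\bullet}\in\Ch^{-}(\DDD)$; the limiting map $y\colon E^{\bullet}\to C^{\bullet}$ is a quasi-isomorphism since its cone has been made acyclic in every degree, hence $E^{\bullet}\in\CCC$ by (a); the map $a\colon D^{\bullet}\to E^{\bullet}$ is a degree-wise split monomorphism and $y\circ a=x$ by construction. For the last clause: if $x$ is an $n$-quasi-isomorphism, then the cone of $x$ itself is already acyclic in degrees $\geqq n$, so the induction adjoins nothing new in those degrees, forcing $E^{k}=D^{k}$ and $a^{k}$ an isomorphism for $k\geqq n$.

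The step I expect to be the genuine obstacle --- or at least the one demanding real care --- is pinning down the inductive invariant so that (d) truly applies at every stage, which is precisely why the induction must run downward from a top degree so that $E^{>k}$ always has finitely many terms, and so that (e), stated only as surjecting onto a single $\Homo^{n-1}$ of one complex in $\CCC$, is handed exactly the complex (the running mapping cone) and the degree for which the resulting object of $\DDD$ corrects the next cohomology group while staying compatible with both the split monomorphism from $D^{\bullet}$ and the chain map to $C^{\bullet}$. The remaining checks --- vanishing of $d^{2}$, the chain-map identities, and the boundedness and termination bookkeeping --- are mechanical.
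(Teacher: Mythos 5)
Your proposal matches the standard proof of Lemma~1.9.5 in Thomason--Trobaugh, which is exactly what the paper cites here (the paper itself gives no proof), so the overall strategy is right: build $E^{\bullet}$ by descending induction, maintaining that the mapping cone of the partial map $E^{>k}\to C^{\bullet}$ is acyclic above degree~$k$, and use (c), (d) to keep that cone inside $\CCC$ so that (e) can be invoked.

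That said, a few places in your write-up blur details that are worth tightening because they are exactly where a reader could be misled. First, the object $A$ you feed to~(e) is \emph{not} ``a term of $C^{\bullet}$'': the relevant epimorphism is $Z^{k}(\Cone(y^{>k}))\twoheadrightarrow \Homo^{k}(\Cone(y^{>k}))$, and $Z^{k}$ of the cone is a subobject of $E^{k+1}\oplus C^{k}$, not of any single $C^{j}$. The resulting map $D_{0}\to Z^{k}\hookrightarrow E^{k+1}\oplus C^{k}$ supplies \emph{both} the new component of the differential $E^{k}\to E^{k+1}$ and the new component of $y^{k}\colon E^{k}\to C^{k}$, and the cycle condition is precisely what makes $d^{2}=0$ and $y\circ d_{E}=d_{C}\circ y$ hold on the summand $D_{0}$. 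Second, your indexing slips: after having built $E^{>k}$, the next step constructs $E^{k}$ (not $E^{k-1}$), and (e) should be applied with $n=k+1$, at degree $k$, every time --- not ``at the highest degree where cohomology survives.'' Third, the last clause as you state it is a small non sequitur: $\Homo^{k}(\Cone(x))=0$ does not by itself force $\Homo^{k}(\Cone(y^{>k}))=0$, since the running cone has fewer boundaries in degree $k$. What actually works is that for $k\geqq n$ one may bypass (e) and simply set $E^{k}:=D^{k}$, because once $E^{j}=D^{j}$ for all $j\geqq k$ the new running cone agrees with $\Cone(x)$ in degrees $k-1,k,k+1$, so $\Homo^{k}(\Cone(y^{>k-1}))=\Homo^{k}(\Cone(x))=0$ directly. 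None of this changes the conclusion, but the reasoning as written leans on an equality of cohomologies that isn't there; the correct move is to observe that the inductive invariant is preserved by the trivial choice, not that (e) outputs $D_{0}=0$.
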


\begin{lem} \label{divisorial lemma} 
Let $X$ be a divisorial scheme 
whose ample family of line bundles is $\{\LLL_\alpha\}$ and 
$E^{\bullet}$ a perfect complex on $X$. 
Then there are line bundles $\LLL_{\alpha_k}$ in the ample family, 
integers $m_k$ and sections $f_k\in\Gamma(X,\LLL_{\alpha_k}^{\otimes m_k})$ 
$(1 \leqq k \leqq m)$ such that\\
{\rm (a)} For each $k$, $X_{f_k}$ is affine.\\
{\rm (b)} $\{X_{f_k}\}_{1 \leqq k \leqq m}$ is an open cover of $X$.\\
{\rm (c)} For each $k$, $E^{\bullet}|_{X_{f_k}}$ is quasi-isomorphic to a strictly perfect complex.
\end{lem}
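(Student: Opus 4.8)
The plan is to derive the statement from the local definition of perfection together with the topological basis that an ample family of line bundles provides, and then to compress to a finite cover using quasi-compactness.

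First I would attach to each point $x \in X$ two open neighbourhoods: an affine open $V_x \ni x$, available because $X$ is a scheme, and an open $U_x \ni x$ on which $E^{\bullet}|_{U_x}$ is quasi-isomorphic to a strictly perfect complex, available directly from the definition of a perfect complex (Def.~\ref{Pseudo-coherent def}~(ii)). Put $W_x := U_x \cap V_x$; this is an open neighbourhood of $x$ that retains both features of $U_x$ and of $V_x$, and in particular is contained in the affine $V_x$.

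Next I would invoke that $X$ is divisorial: by Definition~\ref{divisorial df} the opens $X_f$, with $f \in \Gamma(X,\LLL_{\alpha}^{\otimes n})$ for $\LLL_{\alpha}$ in the ample family and $n$ a positive integer, form a basis for the Zariski topology. So I can pick $\LLL_{\alpha_x}$, a positive integer $m_x$, and $f_x \in \Gamma(X,\LLL_{\alpha_x}^{\otimes m_x})$ with $x \in X_{f_x} \subseteq W_x$. Then clause (a) follows: the open immersion $X_{f_x} \hookrightarrow X$ is affine by Lemma~\ref{fundamental result about invertible sheaves} (applied to the line bundle $\LLL_{\alpha_x}^{\otimes m_x}$), hence $X_{f_x} = X_{f_x} \cap V_x$ is affine because $V_x$ is. Clause (c) follows too: since $X_{f_x} \subseteq U_x$, the complex $E^{\bullet}|_{X_{f_x}}$ is quasi-isomorphic to the restriction to $X_{f_x}$ of a strictly perfect complex, and restricting a bounded complex of algebraic vector bundles yields again a strictly perfect complex. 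Finally the $X_{f_x}$ cover $X$, and $X$ is quasi-compact (a divisorial scheme is coherent by Definition~\ref{divisorial df}), so a finite subfamily $X_{f_1},\dots,X_{f_m}$ already covers $X$, which is clause (b).

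I do not expect a real obstacle: the lemma is essentially a repackaging of the local structure of perfect complexes against the basis of the $X_f$'s. The one place that rewards a little attention is clause (a) --- being the target of an affine open immersion does not make $X_f$ affine on its own, so one must arrange $X_f$ to sit inside an affine open, which is exactly why the intersection with $V_x$ is taken \emph{before} shrinking to an $X_{f_x}$.
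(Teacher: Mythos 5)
Your proposal is correct and is essentially the paper's own argument: both proofs shrink to an open where $E^\bullet$ is quasi-isomorphic to a strictly perfect complex, sit an $X_{f_x}$ from the ample-family basis inside an affine open of that sort, deduce affineness of $X_{f_x}$ from Lemma~\ref{fundamental result about invertible sheaves}, and finish by quasi-compactness. The only cosmetic difference is that the paper folds your two opens $U_x$ and $V_x$ into a single affine open cover $\{U_i\}$ on which the restriction is already strictly perfect up to quasi-isomorphism, whereas you intersect them explicitly.
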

\begin{proof}
Since $E^{\bullet}$ is perfect, 
we can take an affine open covering $\{U_i\}_{i\in I}$ of $X$ 
such that $E^{\bullet}|_{U_i}$ is quasi-isomorphic to 
a strictly perfect complex for each $i\in I$. 
Since $\{\LLL_\alpha\}$ is an ample family, 
for each $x\in $X, there are an $i_x\in I$, 
a line bundle $\LLL_{\alpha_x}$ in the ample family, 
an integer $m_x$ and a section $f_x\in\Gamma(X,\LLL_{\alpha_x}^{\otimes m_x})$ 
such that $x \in X_{f_x} \subset U_{i_x}$. Since $U_{i_x}$ is affine, 
$X_{f_x}$ is affine by Lemma~\ref{fundamental result about invertible sheaves}. 
Now $\{X_{f_x}\}_{x\in X}$ is an affine open covering of $X$ and 
has a finite sub covering by quasi-compactness of $X$. 
\end{proof}

\begin{lem}[\cite{TT90}, Lem.~1.9.4, (b)] \label{finiteness of cohomology} 
Let $E^{\bullet}$ be a strictly pseudo-coherent complex on $X$ 
such that $\Homo^{i}(E^{\bullet})=0$ for $i\geqq m$. 
Then $\Ker d^{m-1}$ is an algebraic vector bundle. 
In particular $\Homo^{m-1}(E^{\bullet})$ is of finite type.
\end{lem}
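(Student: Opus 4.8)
The plan is a short d\'evissage: since $E^{\bullet}$ is strictly pseudo-coherent it is a bounded-above complex of algebraic vector bundles, say $E^i = 0$ for $i > N$, and the hypothesis $\Homo^i(E^{\bullet}) = 0$ for $i \geqq m$ says precisely that $\im d^{i-1} = \Ker d^i$ for every $i \geqq m$ and that $\im d^{N-1} = E^N$. (If $m > N$ there is nothing to prove, since then $\Ker d^{m-1}$ is either $0$ or $E^N$; so assume $m \leqq N$.) First I would extract from this the finite exact sequence of algebraic vector bundles
$$0 \longrightarrow \Ker d^{m-1} \longrightarrow E^{m-1} \overset{d^{m-1}}{\longrightarrow} E^m \longrightarrow \cdots \longrightarrow E^{N-1} \longrightarrow E^N \longrightarrow 0$$
and split it into short exact sequences, writing $Z^i := \Ker d^i$ for $m-1 \leqq i \leqq N$ (so that $Z^{m-1} = \Ker d^{m-1}$ and $Z^N = E^N$):
$$0 \longrightarrow Z^i \longrightarrow E^i \longrightarrow Z^{i+1} \longrightarrow 0, \qquad m-1 \leqq i \leqq N-1.$$

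Next I would prove, by descending induction on $i$ from $i = N$ down to $i = m-1$, that each $Z^i$ is an algebraic vector bundle. The base case $Z^N = E^N$ is immediate. For the inductive step, suppose $Z^{i+1}$ is a vector bundle; then it is locally free of finite rank, so the short exact sequence $0 \to Z^i \to E^i \to Z^{i+1} \to 0$ is locally split --- over a small enough open one lifts a local basis of $Z^{i+1}$ through the surjection $E^i \to Z^{i+1}$ --- whence locally $E^i \cong Z^i \oplus Z^{i+1}$, exhibiting $Z^i$ locally as a direct summand of the finite free module $E^i$; therefore $Z^i$ is locally free of finite rank, i.e. an algebraic vector bundle. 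Taking $i = m-1$ gives that $\Ker d^{m-1}$ is an algebraic vector bundle, and consequently $\Homo^{m-1}(E^{\bullet}) = \Ker d^{m-1}/\im d^{m-2}$, being a quotient of it, is of finite type.

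I do not expect any genuine obstacle: the argument is entirely elementary. The one point worth a little care is the local splitting of a short exact sequence whose cokernel is a vector bundle, and it is precisely the boundedness above of $E^{\bullet}$ that lets the descending induction start from a vector bundle at the top. Equivalently, one may run the whole argument stalkwise over the local rings $\OXx$, where the required splittings are automatic since a finitely generated projective module over a local ring is free.
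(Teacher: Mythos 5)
The paper does not prove this lemma; it simply cites it from Thomason--Trobaugh (Lemma 1.9.4(b) of \cite{TT90}). Your d\'evissage argument — split the exact sequence $0 \to \Ker d^{m-1} \to E^{m-1} \to \cdots \to E^N \to 0$ into short exact sequences $0 \to Z^i \to E^i \to Z^{i+1} \to 0$ and descend from $Z^N = E^N$ using the local splitting of a surjection onto a locally free sheaf — is correct and is essentially the same elementary argument that Thomason and Trobaugh give.
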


\begin{proof}[Proof of Prop.~\ref{enough objects to resolve prop}]
Let $\{\LLL_\alpha\}$ be an ample family of line bundles on $X$ and 
$\III$ the defining ideal of $Y$. 
We denote by $\DDD$ the additive category generated 
by all the $\LLL_{\alpha}^{\otimes m}\otimes_{\OOO_X}\OOO_X/\III^p$ 
with integer $m$ and positive integer $p$. 
By Lemma~\ref{weight ex}, $\DDD \subset \WtrXonY$. 
We intend to apply 
Lemma \ref{enough objects to resolve lemma} to 
$\AAA =\QcohXonY$ 
the category of quasi-coherent $\OX$-Modules whose support on $Y$. 
To do so, 
we have to check the assumptions in Lemma~\ref{enough objects to resolve lemma}. 
Only non-trivial assumption is 
\lq\lq having enough objects to resolve\rq\rq\ condition. 
Let $C^{\bullet}$ be a complex in $\CCC$ such that 
$\Homo^i(C^{\bullet})=0$ for $i\geqq n$, 
and $\FFF \twoheadrightarrow\Homo^{n-1}(C^{\bullet})$ an epimorphism in $\AAA$. 
By Lemma~\ref{divisorial lemma}, 
there are line bundles $\LLL_{\alpha_k}$ integers $m_k$ and their sections 
$f_k\in\Gamma(X,\LLL_{\alpha_k}^{\otimes m_k})$ ($1 \leqq k \leqq m$) 
such that they satisfy the following conditions.\\
(a) For each $k$, $X_{f_k}$ is affine.\\
(b) $\{X_{f_k}\}_{1 \leqq k \leqq m}$ is an open cover of $X$.\\
(c) For each $k$, $C^{\bullet}|_{X_{f_k}}$ is quasi-isomorphic to 
a strictly perfect complex.\\
Fix an integer $k$. 
Since $\Homo^{n-1}(C^{\bullet})|_{X_{f_k}}$ is of finite type 
by Lemma~\ref{finiteness of cohomology}, 
there is sub $\OOO_{X_{f_k}}$-Module of finite type $\GGG \subset \FFF|_{X_{f_k}}$ 
such that the composition 
$\GGG \hookrightarrow \FFF|_{X_{f_k}} \twoheadrightarrow \Homo^{n-1}(C^{\bullet})|_{X_{f_k}}$ is an epimorphism. 
Now since $\GGG$ and $\III|_{X_{f_k}}$ are $\OOO_{X_{f_k}}$-Modules of finite type 
(Lemma~\ref{fund res reg clo}, (i)), 
we have ${(\III|_{X_{f_k}})}^{p_k} \GGG =0$ for some $p_k$. 
Therefore $\GGG$ is considered as 
$\OX/\III^{p_k}|_{X_{f_k}}$-Module of finite type. 
Hence we have an epimorphism 
${(\OX/\III^{p_k}|_{X_{f_k}})}^{\oplus t_k} \twoheadrightarrow \GGG$. 
We have an $\OX$-Modules homomorphism 
${(\OX/\III^{p_k})}^{\oplus t_k} \to \FFF\otimes_{\OOO_X}\LLL_{\alpha_k}^{\otimes m_k s_k}$ for some integer $s_k$ 
(\cite{DG60}, 9.3.1 and \cite{DG64}, 1.7.5). 
Therefore considering the same argument for every $k$, we get a morphism
$$
\bigoplus_{k=1}^m (\OX/\III^{p_k} \otimes_{\OOO_X}\LLL_{\alpha_k}^{\otimes -m_k s_k})^{\oplus t_k} \to \FFF
$$
whose composition with $\FFF \twoheadrightarrow \Homo^{n-1}(C^{\bullet})$ 
is an epimorphism in $\QcohXonY$.
\end{proof}
%


%
Finally, we shall prove that $\gamma$ 
induces category equivalence between their derived categories. 
Now we consider the following exact inclusion functors:
$$
  \cC \onto{\gamma_1} \PerfqcXonY \onto{\gamma_2} \Perf(\XonY).
$$
Lemma~\ref{coherent case} 
assert that $\gamma_2$ induces 
a homotopy equivalence on spectra. 
Thus, it is enough to show that 
the inclusion functor 
$\gamma_1$ induces 
an equivalence of categories 
between their derived categories. 
More strongly we show the following:

\begin{lem} {\label{local cohomology lem}}
The local cohomological functor 
$$
  R\Gamma_Y = 
  \underset{\longrightarrow}{\lim}\mathcal{EXT}(\OX/\III^p, ?) :\TTT(\PerfqcXonY)\to\TTT(\cC)
$$
gives inverse functor of the inclusion functor $\gamma_1$. 
\end{lem}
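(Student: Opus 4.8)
**The plan is to show that $R\Gamma_Y$ lands in $\TTT(\cC)$ and is a two-sided inverse to $\gamma_1$ on the level of derived categories.** First I would verify that the formula $R\Gamma_Y = \varinjlim_p \mathcal{EXT}(\OX/\III^p, ?)$ really does compute the local cohomology along $Y$ and takes a perfect complex of quasi-coherent $\OX$-Modules supported on $Y$ to an object of $\TTT(\cC)$. The key point here is that for $E^\bullet \in \PerfqcXonY$, each $\mathcal{EXT}(\OX/\III^p, E^\bullet)$ is, locally, computed by the Koszul resolution of $\OX/\III^p$ (using regularity of $\III$, Lemma~\ref{fund res reg clo}), so it is a perfect complex; moreover $\OX/\III^p$ is of weight $r$ by Lemma~\ref{weight ex}(i), which controls the Tor-amplitude. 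Taking the (filtered) colimit over $p$ stays inside quasi-coherent Modules (this is where one uses that $X$ is coherent, via Proposition~\ref{coherator prop}), and the result is still supported on $Y$. One should also check that $R\Gamma_Y$ is exact as a functor of triangulated categories and preserves the relevant cohomological support condition, so that it indeed defines a functor $\TTT(\PerfqcXonY) \to \TTT(\cC)$.

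**Next I would establish the two natural transformations comparing $\gamma_1 \circ R\Gamma_Y$ and $R\Gamma_Y \circ \gamma_1$ with the respective identities.** On the $\PerfqcXonY$ side, the canonical map $R\Gamma_Y(E^\bullet) \to E^\bullet$ is a quasi-isomorphism precisely because $E^\bullet$ is already supported (cohomologically) on $Y$: this is the classical statement that local cohomology along $Y$ is the identity on objects already supported on $Y$ (Grothendieck's local cohomology theory, as in Lemma~\ref{local cohomology lem}'s heritage). Since perfection has the $2$-out-of-$3$ property (Lemma~\ref{fund fact for psuedo-coh and perf}(vi)) and $R\Gamma_Y(E^\bullet)$ is perfect of quasi-coherent Modules by the previous paragraph, this comparison map is an isomorphism in $\TTT(\PerfqcXonY)$. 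On the $\cC$ side one argues similarly: an object of $\cC$ is a perfect complex of quasi-coherent Modules supported on $Y$, hence already annihilated by a power of $\III$ (on each member of a finite affine cover), so applying $R\Gamma_Y$ and then the inclusion returns the same object up to quasi-isomorphism.

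**The main obstacle will be the finiteness/boundedness bookkeeping in identifying $R\Gamma_Y(E^\bullet)$ as an object of $\cC$ rather than merely of $\TTT(\QcohXonY)$**, i.e.\ showing it is genuinely a \emph{perfect} complex of quasi-coherent Modules with bounded Tor-amplitude. The issue is that the colimit $\varinjlim_p$ is over an infinite system, so one must argue that the system stabilizes up to quasi-isomorphism in each cohomological degree; here the right input is that locally $E^\bullet$ has a bounded strictly perfect model (global resolution, Theorem~\ref{global resol thm}, plus Lemma~\ref{divisorial lemma}), against which $\mathcal{EXT}(\OX/\III^p, -)$ is computed by the finite Koszul complex of a regular sequence cutting out $Y$, and the Koszul complexes for varying $p$ are cofinal with a pro-system whose colimit has the same cohomology in the range that matters. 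Combined with quasi-compactness of $X$ (to pass from local to global finite Tor-amplitude, Remark~\ref{tor-amp rem}(i)), this yields $R\Gamma_Y(E^\bullet) \in \cC$. Once this is in hand, the two comparison maps above exhibit $R\Gamma_Y$ as quasi-inverse to $\gamma_1$, and composing with the equivalences induced by $\alpha$, $\beta$, and $\gamma_2$ gives the derived Morita equivalence asserted in Theorem~\ref{main theorem}.
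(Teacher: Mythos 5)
The paper's proof takes a cleaner, more abstract route than yours. It observes that the underived functor $\Gamma_Y = \varinjlim_p \HOM(\OX/\III^p, ?)$ is a \emph{right adjoint} of the exact inclusion $\Qcoh(\XonY) \hookrightarrow \QcohX$: the identity $\Gamma_Y\MMM = \MMM$ for $\MMM \in \Qcoh(\XonY)$ (which uses only that $\III$ is of finite type) together with the natural transformation $\Gamma_Y \to \id$ exhibits the adjunction. Right adjoints to exact functors preserve injectives, and $\QcohX$ has enough injectives by the coherator theory, so $\Qcoh(\XonY)$ has enough injectives and $R\Gamma_Y$ is computed by injective resolutions in the usual way. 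From there the counit $R\Gamma_Y \to \id$, restricted to complexes whose cohomological support lies in $Y$, is a quasi-isomorphism, and full faithfulness of $\gamma_1$ finishes the job. Your proof instead tries to compute $R\Gamma_Y E^\bullet$ explicitly from finite resolutions of $\OX/\III^p$ and verify perfection of the colimit complex by hand. The adjunction route is what buys the paper its brevity: it never has to confront perfection of the colimit complex term-by-term, because perfection is recovered for free from the quasi-isomorphism with $E^\bullet$ once the counit is known to be an equivalence.

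Two concrete soft spots in your version. First, you say $\mathcal{EXT}(\OX/\III^p, E^\bullet)$ is ``locally computed by the Koszul resolution of $\OX/\III^p$''; but the Koszul complex of a regular sequence generating $\III$ resolves only $\OX/\III$, not $\OX/\III^p$ for $p>1$. You do have a finite strictly perfect resolution of $\OX/\III^p$, but this comes from $\OX/\III^p \in \WtrXonY$ (Lemma~\ref{weight ex}(i), via the d\'evissage through $\III^n/\III^{n+1}$) plus the global resolution theorem (Theorem~\ref{global resol thm}), not from a Koszul complex. Second, and more seriously, your ``main obstacle'' paragraph does not actually deliver the conclusion $R\Gamma_Y(E^\bullet) \in \cC$: the colimit $\varinjlim_p \mathcal{EXT}(\OX/\III^p, E^\bullet)$ is a filtered colimit of perfect complexes, and perfection is not preserved under such colimits term-by-term, so the assertion that ``the system stabilizes up to quasi-isomorphism in each cohomological degree'' is not enough. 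The correct way to recover perfection is exactly the other half of your argument — the quasi-isomorphism $R\Gamma_Y(E^\bullet) \isoto E^\bullet$ from Grothendieck local cohomology — but as written you invoke membership in $\cC$ \emph{before} establishing that quasi-isomorphism, which makes the ordering circular. Reorganizing so that the counit quasi-isomorphism is proven first (for complexes of $\Gamma_Y$-acyclics, or by the open-complement localization triangle) and then used to conclude perfection of $R\Gamma_Y(E^\bullet)$ would repair this, and is essentially what the paper's adjunction bookkeeping accomplishes automatically.
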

\begin{proof}
Let us consider the functor
$$
\Gamma_Y:= 
\underset{\longrightarrow}{\lim}\HOM(\OX/\III^p, ?) :\Qcoh(X) \to \Qcoh(\XonY). 
$$
Since $\III$ is of finite type, for any $\OX$-Module $\MMM$ in $\Qcoh(\XonY)$, we have the identity 
\begin{equation}
\label{gamma preserve}
\Gamma_Y\MMM=\MMM. 
\end{equation}
This identity and the existence of the canonical natural transformation 
$\Gamma_Y \to \id$ imply that $\Gamma_Y$ is a right adjoint functor of 
the inclusion $\Qcoh(\XonY) \hookrightarrow \Qcoh(X)$. 
Therefore we learn that $\Qcoh(\XonY)$ has enough injective objects and 
for any complex $\Eb$ in $\cC$ such that each components are 
injective quasi-coherent $\OX$-Modules, 
we have the identity $R\Gamma_Y\Eb=\Eb$ by (\ref{gamma preserve}). 
Combining the obvious fact that $\gamma_1$ is fully faithful, 
we conclude that $R\Gamma_Y$ gives an inverse functor of $\gamma_1$.
\end{proof}

\section{Applications}

{\label{CM df}}
In this section, 
we assume that $A$ is the Cohen-Macaulay ring of Krull dimension $d$ and $X=\Spec A$. 
By the very definition, the ring $A$ satisfies the following condition 
(\cf \cite{Bou98}, \S 2.5, Prop.~7): 
For any ideal $J$ in $A$ such that its hight $\hight J=r$, 
there is an $A$-regular sequence $x_1,\ldots, x_r$ contained in $J$.

In this case, 
a coherent $A$-module of weight $d$ is 
just a module of finite length and finite projective dimension. 

\begin{prop}
{\label{wt is well-defined}}
For any integer $0\leqq r\leqq d$, 
$\Wtr(X)$ is closed under extensions in $\Mod(X)$. 
In particular $\Wtr(X)$ is an idempotent complete exact category in the natural way.
\end{prop}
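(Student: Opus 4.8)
The plan is to reduce the statement for general $r$ to the case $r = \dim$ of the closed points — which the paragraph above (the one preceding the Proposition) has essentially identified as the "finite length, finite projective dimension" case — by localizing and using the Cohen–Macaulay hypothesis. Let me recall what must be shown: if $0 \to \cF \to \cG \to \cH \to 0$ is a short exact sequence of $\OX$-Modules with $\cF$, $\cH$ in $\Wtr(X)$, then $\cG \in \Wtr(X)$. By Definition~\ref{weight def}, being in $\Wtr(X)$ means: pseudo-coherent, Tor-dimension $\leqq r$, and supported on some regular closed immersion $Z \inj X$ of codimension $r$. The first two conditions pass to $\cG$ immediately: pseudo-coherence is $2$-out-of-$3$ by Lemma~\ref{fund fact for psuedo-coh and perf}, (vi) (a module being a one-term complex, an extension gives a distinguished triangle), and the Tor-dimension bound $\Tordim(\cG) \leqq \max\{\Tordim(\cF), \Tordim(\cH)\} \leqq r$ is Lemma~\ref{fund fact of flat module}, (iii). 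So the only genuine issue is the support condition: $\Supp \cG = \Supp \cF \cup \Supp \cH$ by Lemma~\ref{supp nt}, (i), and a priori the two summands lie on two \emph{different} regular closed immersions of codimension $r$; I must produce a single one containing their union.

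The key step is therefore: given two closed subsets $Z_1, Z_2 \subset X = \Spec A$, each of pure codimension $r$ and each cut out by a regular sequence (locally), show $Z_1 \cup Z_2$ is contained in a closed subset that is again the support of a regular closed immersion of codimension $r$. Here is where the Cohen–Macaulay hypothesis — quoted explicitly in the excerpt in the form "for any ideal $J$ with $\hight J = r$ there is an $A$-regular sequence $x_1, \dots, x_r$ in $J$" — does the work. First, $Z_1 \cup Z_2$ is a closed subset all of whose minimal primes have height $\geqq r$: indeed each irreducible component of $Z_i$ has codimension $r$ since $Z_i$ supports a module of Tor-dimension $\leqq r$ on a scheme that is Cohen–Macaulay, and the new Auslander–Buchsbaum-type constraint forces equidimensionality (a finite-Tor-dimension module supported in codimension $\geqq r$ on a CM local ring of dimension $r$ has depth $0$, hence finite length — this is the content of the paragraph preceding the Proposition, and it globalizes by Lemma~\ref{fund fact of flat module}, (iv)). Then let $J = I(Z_1) \cap I(Z_2)$, the radical ideal of $Z_1 \cup Z_2$; we have $\hight J = r$, so the CM property furnishes a regular sequence $x_1, \dots, x_r \in J$. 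The closed subscheme $Y := V(x_1, \dots, x_r)$ is a regular closed immersion of codimension $r$ (its Koszul complex resolves $\OX/(x_1,\dots,x_r)$ since the sequence is regular — Lemma~\ref{fund res reg clo}), and $Y \supseteq Z_1 \cup Z_2 = \Supp \cG$. Hence $\cG$ is supported on $Y$, and $\cG \in \Wtr(X)$.

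The last sentence of the Proposition — idempotent completeness of $\Wtr(X)$ as an exact category — follows by the same bookkeeping as Lemma~\ref{weight lemma}: closure under direct summands in $\Mod(X)$ uses Lemma~\ref{fund fact of flat module}, (iii) (applied to a split sequence, so Tor-dimension of a summand is $\leqq$ that of the whole), Lemma~\ref{supp nt}, (i) (a summand is supported inside the support of the whole, hence inside the same $Y$), and Lemma~\ref{fund fact for psuedo-coh and perf}, (vii) (pseudo-coherence of summands); then a full additive subcategory of an abelian category that is closed under extensions and direct summands is an idempotent complete exact category in the induced structure. The one point I expect to be the real obstacle is verifying that the \emph{regularity} of the sequence $x_1, \dots, x_r$ extracted from $J$ is not lost when we pass to the conclusion — i.e.\ that $V(x_1,\dots,x_r)$ genuinely has codimension exactly $r$ and the Koszul complex is acyclic in the right range; this is where one must invoke that $A$ is Cohen–Macaulay (so height equals the length of a maximal regular sequence in the ideal, and a regular sequence of length $r$ in a height-$r$ ideal cuts out a pure-codimension-$r$ subscheme), rather than merely that $A$ is Noetherian.
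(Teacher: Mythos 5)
Your proof is correct and follows essentially the same route as the paper's: the Tor-dimension and pseudo-coherence conditions pass to $\cG$ for formal reasons, the issue is the support condition, $\Codim\Supp\cG\geqq r$ holds because $\Supp\cG=\Supp\cF\cup\Supp\cH$ sits inside a union of two codimension-$r$ closed sets, and the Cohen--Macaulay hypothesis then furnishes a regular sequence of length $r$ whose zero locus contains $\Supp\cG$. The one place you do more work than the paper is the excursion through Auslander--Buchsbaum to show that each minimal prime of $\Supp\cG$ has height \emph{exactly} $r$ (so that $\hight J=r$ on the nose, matching the letter of the quoted Bourbaki statement); the paper skips this and simply uses $\Codim\Supp\cG\geqq r$ together with the standard strengthening that in a Cohen--Macaulay ring any ideal of height $\geqq r$ already contains an $A$-regular sequence of length $r$, which is shorter and avoids the (slightly garbled in your write-up, though ultimately correct) equidimensionality digression.
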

\begin{proof}
Let us consider the short exact sequence
$$
  \cF \rightarrowtail \cG \twoheadrightarrow \HHH
$$
in $\Mod(X)$ such that $\cF$ and $\HHH$ are in $\Wtr(X)$. 
Then we learn that $\cG$ is of Tor-dimension $\leqq r$ and 
$\Codim \Supp \cG \geqq r$. 
Therefore 
there is an $A$-regular sequence $x_1,\ldots,x_r$ such that 
$\Supp \cG \subset V(x_1,\ldots,x_r)$. Hence we conclude that $\cG$ is in $\Wtr(X)$.
\end{proof}

\begin{thm}
{\label{main theorem 2}}
For any integer $0\leqq r\leqq d$, the canonical inclusion functor 
$\Ch^b(\Wtr(X)) \hookrightarrow \Perf^r(X)$ is a derived Morita equivalence.
\end{thm}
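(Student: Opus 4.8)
The plan is to obtain Theorem~\ref{main theorem 2} as a filtered-colimit consequence of the main theorem (Theorem~\ref{main theorem}), the Cohen--Macaulay hypothesis entering only to produce enough regular closed immersions of codimension $r$. First I would note that $X=\Spec A$ is affine, hence a coherent, noetherian, divisorial scheme (see the examples following Def.~\ref{divisorial df}), so Theorem~\ref{main theorem} applies to $X$ together with any regular closed immersion of codimension $r$ into $X$. For a regular sequence $\underline x=(x_1,\dots,x_r)$ in $A$, set $Y_{\underline x}:=V(x_1,\dots,x_r)\subset X$. Because the Koszul complex on a regular sequence resolves $A/(x_1,\dots,x_r)$ and $\hight(x_1,\dots,x_r)=r$, the immersion $Y_{\underline x}\inj X$ is regular of codimension $r$; hence by Theorem~\ref{main theorem} the inclusion $\Ch^b(\Wt^r(X\on Y_{\underline x}))\inj\Perf(X\on Y_{\underline x})$ is a derived Morita equivalence.

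Next I would use the defining property of the Cohen--Macaulay ring $A$ recalled at the start of this section: any closed subset $Z\subset X$ with $\Codim Z\geqq r$ satisfies $\hight I(Z)\geqq r$, so $I(Z)$ contains an $A$-regular sequence of length $r$ (apply the recalled property and, if $\hight I(Z)>r$, retain only the first $r$ terms), whence $Z\subset Y_{\underline x}$ for that $\underline x$. Applying this with $Z=\Supph E^{\bullet}$ for $E^{\bullet}\in\Perf^r(X)$, and with $Z=\Supp\cF$ for each module $\cF$ occurring in a bounded complex of weight $r$ modules (such $\cF$ being supported in codimension $\geqq r$ by Def.~\ref{weight def}, and $\Wt^r(X\on Y_{\underline x})$ being by definition the category of weight $r$ modules supported on $Y_{\underline x}$), I obtain
\[
\Perf^r(X)=\bigcup_{\underline x}\Perf(X\on Y_{\underline x}),\qquad
\Ch^b(\Wtr(X))=\bigcup_{\underline x}\Ch^b(\Wt^r(X\on Y_{\underline x})),
\]
where $\underline x$ ranges over the regular sequences of length $r$ in $A$. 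Applying the same observation to $Z=Y_{\underline x}\cup Y_{\underline y}$ shows that this index set is directed, with $Y_{\underline x},Y_{\underline y}\subset Y_{\underline z}$ for suitable $\underline z$; the transition maps, induced by the inclusions $Y_{\underline x}\subset Y_{\underline z}$, are fully faithful complicial exact functors, and the inclusion of the theorem is the colimit over this directed system of the inclusions $\Ch^b(\Wt^r(X\on Y_{\underline x}))\inj\Perf(X\on Y_{\underline x})$.

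Finally I would pass to the colimit. Every bounded complex of coherent $\OX$-modules, and every zig-zag of quasi-isomorphisms between two such complexes, involves only finitely many modules, each supported in codimension $\geqq r$; by directedness it therefore already lies in the stage indexed by a single $\underline x$, so $\TTT(\Ch^b(\Wtr(X)))=\colim_{\underline x}\TTT(\Ch^b(\Wt^r(X\on Y_{\underline x})))$ and $\TTT(\Perf^r(X))=\colim_{\underline x}\TTT(\Perf(X\on Y_{\underline x}))$, compatibly with the inclusion functors. A filtered colimit of equivalences of categories is again an equivalence, and the same colimit argument applies to the associated $K$-theory spectra and, more generally, to the complicial enhancements; combined with the levelwise equivalences supplied by Theorem~\ref{main theorem}, this yields that $\Ch^b(\Wtr(X))\inj\Perf^r(X)$ is a derived Morita equivalence. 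The one point that needs care — and which I regard as the main obstacle, the rest being essentially formal once Theorem~\ref{main theorem} is in hand — is precisely this interchange of the passage to the derived category $\TTT(-)$ with filtered colimits of complicial biWaldhausen categories.
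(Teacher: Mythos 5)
Your proposal is correct and takes essentially the same route as the paper: both express $\Ch^b(\Wtr(X))$ and $\Perf^r(X)$ as filtered unions over regular closed immersions of codimension $r$ supplied by the Cohen--Macaulay property, apply Theorem~\ref{main theorem} levelwise, and conclude by passing the derived-category functor $\TTT$ through the filtered colimit. The paper's proof is terse (it just invokes ``continuity of the functor $\TTT$''), whereas you spell out the directedness of the index system and flag the colimit interchange as the point requiring care, but there is no substantive difference in approach.
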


\begin{proof}
We can write the categories $\Ch^b(\Wtr(X))$ and $\Perf^r(X)$ as follows.
\begin{align*}
\Ch^b(\Wtr(X))&=\underset{Y \subset X}{\lim} \Ch^b(\WtrXonY),\\
\Perf^r(X)&=\underset{Y \subset X}{\lim} \Perf(\XonY), 
\end{align*}
where the limits taking over the regular closed immersion of 
codimension $\geqq r$. Hence we get the result by Theorem~\ref{main theorem} and 
continuity of functor $\cT$.
\end{proof}

\begin{cor}
{\label{main theorem 2 cor}}
For any integer $0\leqq r\leqq d$, 
we have the canonical homotopy equivalence of spectra and mixed complexes
\begin{gather*}
K^S(\Wtr(X)) \isoto K^S(\Perf^r(X)), \\
HC(\Wtr(X)) \isoto HC(\Perf^r(X)).
\end{gather*}
\end{cor}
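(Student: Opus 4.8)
The plan is to derive this purely formally from the derived Morita equivalence of Theorem~\ref{main theorem 2}, feeding it into the invariance properties of non-connected $K$-theory and of the cyclic mixed complex; this is exactly the mechanism that produces the corollary to Theorem~\ref{main theorem} from that theorem.

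First I would compare $\Wtr(X)$ with its category of bounded complexes. By Proposition~\ref{wt is well-defined}, $\Wtr(X)$ is an idempotent complete exact category, so the non-connected form of the Gillet--Waldhausen theorem recorded in \cite{Sch04} applies to the inclusion $\Wtr(X) \inj \Ch^b(\Wtr(X))$ (sending a module to the complex concentrated in degree zero, with quasi-isomorphisms as weak equivalences) and yields a homotopy equivalence
$$
  K^S(\Wtr(X)) \isoto K^S(\Ch^b(\Wtr(X));\qis).
$$
The same idempotent completeness, together with Keller's derived invariance of the mixed complex \cite{Kel99}, identifies $HC(\Wtr(X))$ with $HC(\Ch^b(\Wtr(X)))$.

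Next I would invoke Theorem~\ref{main theorem 2}: the inclusion $\Ch^b(\Wtr(X)) \inj \Perf^r(X)$ is a derived Morita equivalence. On $K$-theory, combining the Schlichting approximation theorem with the comparison theorem of \cite{Sch06} shows that a derived Morita equivalence between complicial biWaldhausen categories closed under the canonical homotopy push-outs and pull-backs induces a homotopy equivalence of non-connected $K$-theory spectra, so that
$$
  K^S(\Ch^b(\Wtr(X));\qis) \isoto K^S(\Perf^r(X));
$$
composing with the equivalence of the previous step gives the first assertion. On cyclic homology, the derived invariance of \cite{Kel99} applied to the same derived Morita equivalence gives a quasi-isomorphism of mixed complexes $HC(\Ch^b(\Wtr(X))) \isoto HC(\Perf^r(X))$, and combining this with the identification of the previous step yields the second assertion.

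Since all the geometric input is already packaged into Theorem~\ref{main theorem 2} --- hence into Theorem~\ref{main theorem} and the continuity of $\cT$ --- there is no serious obstacle in the present statement. The only point requiring care is to check that every category in sight ($\Wtr(X)$, $\Ch^b(\Wtr(X))$, $\Perf^r(X)$) really carries the complicial biWaldhausen structure with the canonical homotopy push-outs and pull-backs and is idempotent complete, so that the invariance theorems of \cite{Sch04}, \cite{Sch06} and \cite{Kel99} are literally applicable; this is precisely what Proposition~\ref{wt is well-defined} together with the standard structure on categories of bounded complexes guarantees.
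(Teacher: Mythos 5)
Your proposal is correct and follows essentially the same route as the paper: the paper's proof is simply the one-line observation that both $K^S$ and $HC$ are derived invariant, so the derived Morita equivalence of Theorem~\ref{main theorem 2} immediately gives the statement, with the $\Wtr(X)\leadsto\Ch^b(\Wtr(X))$ reduction via the non-connected Gillet--Waldhausen theorem being implicit (the paper spelled it out earlier for the Corollary to Theorem~\ref{main theorem}). You have merely made explicit the same chain of invariance results ([Sch04], [Sch06], [Kel99]) that the paper relies on.
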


\begin{proof}
Since both theories are derived invariant, 
the statement is just a corollary of 
Theorem \ref{main theorem 2}.
\end{proof}

Now moreover we assume 
that $A$ is local 
and let $\mm$ be its maximal ideal. 
Then since $A$ is Cohen-Macaulay, 
$Y:=V(\mm) \inj X$ is a regular closed immersion. 
Therefore by Theorem~\ref{main theorem}, 
we learn 
that $K^S(\XonY)$ is homotopy equivalent 
to $K^S(\Wt(\XonY))$. 
Now recall that Weibel's $K$-dimensional conjecture.

\begin{con}[$K$-dimensional conjecture]
{\label{Weibel conj}}
For any noetherian scheme $Z$ 
of finite Krull-dimension $n$, 
and integer $q > n$, 
we have $K_{-q}^B(Z)=0$.
\end{con}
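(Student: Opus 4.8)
The plan is to combine Zariski descent with the weight--coniveau tower $\Perf^{\bullet}$ of Definition~\ref{perf df} and an induction on the Krull dimension, so that the conjecture for an arbitrary noetherian scheme $Z$ of dimension $d$ collapses onto the single statement that $K^B_{-q}(\Spec\OXx)=0$ for $q>d$ when $\OXx$ is a noetherian \emph{local} ring of dimension $d$. First I would pass to the affine case: since $K^B$ satisfies Zariski descent (\cite{TT90}), a finite affine open cover $Z=U\cup V$ produces a Mayer--Vietoris sequence
$$\cdots \to K^B_{-q}(Z)\to K^B_{-q}(U)\oplus K^B_{-q}(V)\to K^B_{-q}(U\cap V)\to K^B_{-q-1}(Z)\to\cdots,$$
and noetherianity keeps all the dimensions appearing $\le n$; a routine induction on the size of an affine cover (base case: one affine) then reduces the conjecture to $K^B_{-q}(\Spec A)=0$ for $q>\dim A$, the case $\dim A=0$ being classical.

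Now fix $X=\Spec A$ of dimension $d$ and assume the conjecture in all dimensions $<d$. The tower $\Perf(X)=\Perf^0(X)\supseteq\cdots\supseteq\Perf^d(X)\supseteq\Perf^{d+1}(X)$ is finite with $\Perf^{d+1}(X)$ acyclic, and each inclusion $\Perf^{r+1}(X)\hookrightarrow\Perf^r(X)$ sits in a Thomason localization sequence of complicial biWaldhausen categories, yielding (Schlichting \cite{Sch04}, \cite{Sch06}) a homotopy fibre sequence on non-connected $K$-theory with cofibre $K^B(\Perf^r(X)/\Perf^{r+1}(X))$. Splicing them gives a strongly convergent spectral sequence $E_1^{r,s}=K^B_{-r-s}(\Perf^r(X)/\Perf^{r+1}(X))\Rightarrow K^B_{-r-s}(X)$ with $0\le r\le d$, so it suffices to kill $K^B_{-q}(\Perf^r(X)/\Perf^{r+1}(X))$ for all $q>d$. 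By the localization theorem and passage to the colimit over dense opens, this graded piece is, on $K$-theory, the coproduct over the codimension-$r$ points $x$ of $K^B(\Perf^r(\Spec\OXx))$ with $\dim\OXx=r$; and $\Perf^r(\Spec\OXx)$ fits in a localization sequence $K^B(\Perf^r(\Spec\OXx))\to K^B(\Spec\OXx)\to K^B(\Spec\OXx\ssm\{x\})$ whose last term is a scheme of dimension $r-1$. For $r<d$ the inductive hypothesis (applied to $\Spec\OXx$ and to its punctured spectrum) gives $K^B_{-q}(\Perf^r(\Spec\OXx))=0$ for $q>r$, hence for $q>d$; for $r=d$ the inductive hypothesis disposes of the punctured spectrum, leaving exactly $K^B_{-q}(\Spec\OXx)=0$ for $q>d$ with $\OXx$ local of dimension $d$.

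It remains to treat this local, top-codimension case. If $\OXx$ is regular it is classical that negative $K$-theory vanishes. If $\OXx$ is Cohen--Macaulay, Theorem~\ref{main theorem} identifies $\Perf^d(\Spec\OXx)$ with $\Ch^b(\Wt^d(\Spec\OXx))$, so the desired vanishing becomes a statement about the negative $K$-theory of the exact category $\Wt^d(\Spec\OXx)$ of finite-length $\OXx$-modules of finite projective dimension; because this category is not abelian when $\OXx$ is singular (finite projective dimension being a genuine restriction, by Auslander--Buchsbaum), Schlichting's vanishing for abelian categories does not apply directly, and one needs a resolution-type comparison of $\Wt^d(\Spec\OXx)$ with a Noetherian abelian category of finite-length modules --- precisely the analysis of the topological filtration begun in the Applications section and pursued in \cite{Moc08}.

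The main obstacle is exactly this local, top-codimension vanishing for a genuinely singular ring, which is as hard as the conjecture itself: for an arbitrary noetherian local ring there is no dévissage and no weight description of $\Perf^d(\Spec\OXx)$ by the methods of this paper. The realistic route past it is to replace $\Spec\OXx$ by a better model --- a Macaulayfication, a resolution, or an alteration --- and to descend the vanishing along a cdh- or pro-cdh-type hypercover by Cohen--Macaulay or regular schemes, using the continuity of $K^B$ and of cyclic homology together with the comparison theorems already invoked; making such a descent statement precise and proving it is where the real difficulty lies, and it is the direction toward which \cite{HM08} and \cite{Moc08} are aimed.
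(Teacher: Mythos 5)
This statement is a \emph{Conjecture} in the paper; the authors do not prove it, and you should not expect to find a proof in the text. What they record is only (a) that the case of schemes essentially of finite type over a field of characteristic zero was settled in \cite{CHSW08}, and (b) that by a theorem of Balmer \cite{Bal07} the conjecture for $Z$ follows once one knows the local vanishing $K^B_{-q}(\Spec\OOO_{Z,z}\on\overline{\{z\}})=0$ for every $z\in Z$ and every $q>\dim\OOO_{Z,z}$; Theorem~\ref{main theorem} of this paper then re-expresses this local vanishing, in the Cohen--Macaulay case, as $K^S_{-q}(\Wt(\XonY))=0$ for $q>d$. That is the entire content of the paper's remarks on the conjecture.

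Your proposal is, in substance, a re-derivation of Balmer's reduction: Zariski/Mayer--Vietoris descent to affines, the niveau filtration on $\Perf$, the localization spectral sequence, and the collapse onto codimension-$r$ local rings are exactly the machinery of \cite{Bal07}, and you land on the same residual local statement the paper quotes. You are also right --- and it is the most valuable sentence in your write-up --- that after the reduction the genuinely singular local top-codimension case is as hard as the conjecture itself, and that neither Theorem~\ref{main theorem} (which only converts it into a statement about the exact category $\Wt^d$, not into a known vanishing) nor any d\'evissage in this paper closes it. Two technical cautions about your sketch. First, the tower $\Perf^{\bullet}(X)$ of Definition~\ref{perf df} is indexed by \emph{regular} closed immersions of codimension $\geq r$, and the identification $\Perf^r(X)=\lim_Y\Perf(\XonY)$ used in Theorem~\ref{main theorem 2} is established only for affine Cohen--Macaulay $X$, where the Cohen--Macaulay property manufactures the required regular sequences; on a general noetherian scheme the regular-immersion filtration and the support-coniveau filtration need not coincide, so the spectral sequence you wrote down is really Balmer's and does not fall out of Definition~\ref{perf df} on the nose. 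Second, the decomposition of $\Perf^r/\Perf^{r+1}$ into a coproduct over codimension-$r$ points requires the Thomason--Trobaugh localization theorem for $K^B$ together with continuity and is better cited than re-derived.
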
  

This conjecture is recently proved 
for schemes which is 
essentially of finite type 
over a field of characteristic $0$ \cite{CHSW08}. 
According to the paper \cite{Bal07}, 
if for any local ring $\OOO_{Z,z}$ of $Z$, 
we have $K_{-q}^B(\Spec \OOO_{Z,z}\!\on\!\overline{\{z\}})=0$ 
for $q > \dim \OOO_{Z,z}$, 
then the conjecture above is true for $Z$. 
Therefore for any Cohen-Macaulay scheme, 
the conjecture is reduced 
to vanishing of $K^S_{-q}(\Wt(\XonY))$ for $q> d$.


\vspace{1cm}

\noindent
 Toshiro Hiranouchi \\
{\tt hiranouchi@math.kyushu-u.ac.jp}

\vspace{0.5cm}

\noindent
 Satoshi Mochizuki \\
 {\tt mochi81@hotmail.com}

\end{document}